\theoremstyle{definition}
\newtheorem{defi}{Definition}[section]
\newtheorem{rem}[defi]{Remark}
\newtheorem{ex}[defi]{Example}
\theoremstyle{plain}
\newtheorem{thm}[defi]{Theorem}
\newtheorem{prop}[defi]{Proposition}
\newtheorem{lem}[defi]{Lemma}
\newtheorem{cor}[defi]{Corollary}
\newtheorem{question}{Question}[section]
\renewcommand{\ker}{\operatorname{Ker}}
\renewcommand{\Im}{\operatorname{Im}}
\newcommand{\C}{\mathbb{C}}
\newcommand{\R}{\mathbb{R}}
\newcommand{\Q}{\mathbb{Q}}
\newcommand{\Z}{\mathbb{Z}}
\newcommand{\F}{\mathbb{F}}
\newcommand{\id}{\operatorname{id}}
\newcommand{\bfk}{\bold{k}}
\renewcommand{\tilde}{\widetilde}
\renewcommand{\epsilon}{\varepsilon}
\newcommand{\la}{\langle}
\newcommand{\ra}{\rangle}
\newcommand{\sign}{\operatorname{sign}}
\newcommand{\Ham}{\operatorname{Ham}}
\newcommand{\M}{\mathcal{M}}
\newcommand{\rest}[2]{\left.#1\right|_{#2}}
\newcommand{\ab}{\mathrm{ab}}
\newcommand{\knot}{\mathrm{knot}}
\newcommand\shorttitle{\footnotesize{Topological constraints on clean Lagrangian intersections from $\Q$-valued augmentations}}
\newcommand\authors{Yukihiro Okamoto}
\begin{document}

\title{Topological constraints on clean Lagrangian intersections from $\mathbb{Q}$-valued augmentations}
\author{Yukihiro Okamoto }
\date{}
\maketitle

\begin{abstract}
\noindent
Let $K$ be a knot in $\mathbb{R}^3$ which has the $(2,q)$-torus knot for $q\neq \pm 1$ or the figure-eight knot as a component of connected sum. For its conormal bundle $L_K$ in $T^*\mathbb{R}^3$, we show that there is no compactly supported Hamiltonian diffeomorphism $\varphi$ on $T^*\mathbb{R}^3$ such that $\varphi(L_K)$ intersects the zero section $\mathbb{R}^3$ cleanly along the unknot in $\mathbb{R}^3$.
Using symplectic field theory, the proof is reduced to studying the augmentation variety $V_{\bold{k}}(K)$ of  $K$ over a filed $\bold{k}$.
The key point of this paper is finding an algebraic constraint on $V_{\bold{k}}(K)$ which is valid only when $\bold{k}$ is not algebraically closed, and the proof is completed by some arithmetic argument with $\bold{k}=\mathbb{Q}$.
\end{abstract}

\section{Introduction}\label{sec-intro}

\noindent
\textbf{Background.}
Let us consider the cotangent bundle $T^*\R^3$ of $\R^3$ with its canonical symplectic structure.
We identify the zero section with $\R^3$.
Given any knot $K$ in $\R^3$, let $L_K$ denote its conormal bundle.
It is a Lagrangian submanifold of $T^*\R^3$ and intersects $\R^3$ cleanly along the knot $K$. 
Let $\mathrm{Ham}_c(T^*\R^3)$ denote the set of Hamiltonian diffeomorphisms on $T^*\R^3$ associated to compactly supported Hamiltonians on $T^*\R^3$.
Using this notion, we ask the following question, which was observed by the author in \cite{O}:
\textit{Is there $\varphi \in \mathrm{Ham}_c(T^*\R^3)$ such that $\varphi(L_{K})$ intersects $\R^3$ cleanly along a knot whose isotopy class in $\R^3$ is different from $K$?}

As a special case, the answer to this question when $K$ is the unknot is `no'. This can be deduced from the result by Ganatra-Pomerleano \cite[Proposition 6.29]{GP} about clean Lagrangian intersections in a plumbing of two copies of the cotangent bundle of $S^3$ along the unknots (see \cite[Section 1]{O}).
Smith-Wemyss \cite[Proposition 4.10]{SW} and Asplund-Li \cite[Theorem 1.3]{AL} also studied clean Lagrangian intersections in plumbings of two copies of $T^*S^3$ along the unknots.
These results are concerned with Smith's question on the `persistence (or rigidity)
of unknottedness of Lagrangian intersections' \cite[Question 1.5]{GP}.

In a general case,
under the assumption that $\varphi(L_K)$ for $\varphi \in \Ham_c(T^*\R^3)$ intersects $\R^3$ cleanly along a knot $K'$ in $\R^3$, the author proved a theorem
\cite[Theorem 1.1]{O}, which relates the \textit{augmentation variety} in $(\C^*)^2$ of $K$ with that of $K'$.
This result leads us to the same answer to the question as above when $K$ is the unknot \cite[Theorem 1.3]{O}.
We can also deduce some constraints on $K'$ when $K$ has certain non-trivial knot types \cite[Theorem 1.4, 1.5]{O}.

In fact, the previous result \cite[Theorem 1.1]{O} is useless to solve the following question.
For the reason, see Remark \ref{rem-1} below.

\begin{question}\label{Q-2}
If $K$ is not the unknot, is there $\varphi \in \Ham_c(T^*\R^n)$ such that $\varphi(L_{K})$ intersects $\R^3$ cleanly along the unknot in $\R^3$? 
\end{question}

Note that the aforementioned persistence of unknottedness is not applicable to this question since $\varphi(L_K)$ is not the conormal bundle of a knot in general.

%

\

\noindent
\textbf{Main results.}
The goal of this paper is to give a negative answer to Question \ref{Q-2} when $K$ has a certain knot type by using augmentation variety over $\Q$ and some arithmetic argument.
Let $T_{(2,2m+1)}$ for $m\in \Z$ denote the $(2,2m+1)$-torus knot and $4_1$ denote the figure-eight knot.
%
\begin{thm}[Theorem \ref{thm-main} and Theorem \ref{thm-eight}]\label{thm-1}
Let $K$ be $K'\# T_{(2,2m+1)}$ or $K' \# 4_1$, where $K'$ is an arbitrary knot in $\R^3$ and $m\in \Z \setminus \{0,-1\}$. Then, there is no $\varphi\in \mathrm{Ham}_c(T^*\R^3)$ such that $\varphi(L_{K})$ intersects $\R^3$ cleanly along the unknot in $\R^3$.
\end{thm}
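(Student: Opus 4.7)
The plan is to argue by contradiction, refining the augmentation-variety obstruction of \cite[Theorem 1.1]{O} so that it extracts arithmetic information over $\Q$ that the $\C$-version cannot detect. Suppose some $\varphi\in\Ham_c(T^*\R^3)$ exists with $\varphi(L_K)$ intersecting $\R^3$ cleanly along the unknot $U$. Via the SFT machinery used in \cite{O}, the Hamiltonian isotopy together with the clean-intersection data produces a chain-level morphism between the Chekanov--Eliashberg DGAs (with coefficients in $\bfk[\mu^{\pm1},\lambda^{\pm1}]$) of the unit conormal tori $\Lambda_K$ and $\Lambda_U$ in $ST^*\R^3$, and consequently a relation between the augmentation varieties $V_{\bfk}(K)$ and $V_{\bfk}(U)$ valid over every field $\bfk$.

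The key new step, and the main obstacle, is to upgrade this relation to a $\bfk$-rational lifting statement: every $\bfk$-point of $V_{\bfk}(U)$ (which, for the unknot, is an explicit toric subvariety of $(\bfk^*)^2$) must lift to a $\bfk$-point of $V_{\bfk}(K)$ with a prescribed $\mu$-coordinate. Over an algebraically closed $\bfk$ such lifts are automatic on any irreducible component of $V_{\bfk}(K)$ that dominates the $\mu$-axis, which is precisely why \cite[Theorem 1.1]{O} is insufficient for Question \ref{Q-2}. Over $\bfk=\Q$ the lifting becomes a genuine arithmetic constraint, and establishing it requires tracing through the SFT cobordism argument and verifying that the induced DGA morphisms and the associated disk counts are defined over $\Q$, not only over $\C$.

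Once the $\Q$-rational constraint is in hand, I would reduce to the two named summands: the connected-sum behaviour of augmentation varieties lets me absorb the contribution of the arbitrary companion $K'$ (whose augmentation variety always contains the trivial meridian-augmentation $\Q$-point), so the constraint localises to $V_{\Q}(T_{(2,2m+1)})$ or $V_{\Q}(4_1)$. The final step is an arithmetic contradiction using the known augmentation polynomials of these knots. For $T_{(2,2m+1)}$ the non-abelian component is cut out by an integral polynomial whose required $\mu$-root satisfies an equation of the form $\mu^{2(2m+1)}=\pm1$ (up to a lower-order correction); this has no rational solution exactly when $2m+1\neq \pm1$, i.e.\ $m\in\Z\setminus\{0,-1\}$. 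For $4_1$, the symmetric augmentation polynomial has no admissible rational roots at all, yielding the same contradiction. The hardest piece will be Step~2 — extracting the $\Q$-rational lifting from the SFT chain-level data — while the subsequent reductions and the arithmetic check in Step~4 are finite and explicit.
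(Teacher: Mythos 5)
Your overall architecture (contradiction via an augmentation-variety obstruction that is only visible over a non-algebraically-closed field) matches the paper, but there are three concrete gaps, the first of which is fatal as written.

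First, you take the Chekanov--Eliashberg DGAs with coefficients in $\bfk[\lambda^{\pm},\mu^{\pm}]$, i.e.\ in $\Z[H_1(\Lambda_K)]$. The paper's essential refinement is to work over $\Z[H_2(S^*\R^3,\Lambda_K)]\cong\Z[\lambda^{\pm},\mu^{\pm},U^{\pm}]$, where the extra variable $U$ records the fiber class $[S^*_q\R^3]$. Without $U$ the obstruction is vacuous for the unknot: as recorded in Remark \ref{rem-1}, one has $V_{\bfk}(O)\cap\{Z=1\}\subset V_{\bfk}(K)\cap\{Z=1\}$ for \emph{every} knot $K$ and every field $\bfk$ (not only algebraically closed ones), and Remark \ref{rem-reduction} and the remark after Proposition \ref{prop-J-eight} show explicitly that the extracted polynomials acquire the rational root $T=-\mu^{-1}(1-\mu)^2$ once one sets $U=1$. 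So no amount of $\Q$-rationality bookkeeping on the disk counts (which are integers anyway, so "defined over $\Q$" is automatic) rescues the two-variable version; you must enlarge the coefficient ring before starting.

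Second, your Step 3 invokes a "connected-sum behaviour of augmentation varieties" to localise the constraint to $V_{\Q}(T_{(2,2m+1)})$ or $V_{\Q}(4_1)$. No such splitting is used or needed in the paper, and it is unclear it holds in the form you want over $\Q$ with the $U$-variable present. The paper instead works directly with the braid $\sigma_n(\sigma_{n+1})^{2m+1}B$ of the connected sum and observes that two specific entries of the matrix presenting $HC_0$ involve only the generators $a_{n,n+2}$, $a_{n+2,n}$ attached to the torus-knot (resp.\ figure-eight) factor; the companion $K'$ drops out of those particular relations for free, with no general structural result about connected sums required.

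Third, your arithmetic endgame for $T_{(2,2m+1)}$ is not correct as stated: an equation of the shape $\mu^{2(2m+1)}=\pm1$ always has the rational solutions $\mu=\pm1$, so it cannot be what rules out $m\notin\{0,-1\}$. In the paper the obstruction is a polynomial $P_m(T)$ in an auxiliary variable $T$ (the image of $a_{n,n+2}a_{n+2,n}$), with coefficients in $\Z[\mu^{\pm},U^{\pm}]$; after fixing $\mu=y_0$ one shows every irreducible factor over $\Q(Z)$ has $T$-degree at least $2$ (via a degree count on the rational function $T\bar{k}_m/\bar{h}_m$, of degree $m+1$), and then one needs Hilbert's irreducibility theorem to specialise $Z$ while preserving irreducibility. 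That input is absent from your outline and I do not see how to avoid it for general $m$. (Your $4_1$ endgame is closer in spirit to the paper's rational-root-theorem check at $(\mu,U)=(-1,-2)$, but again only once the cubic is taken with $U\neq 1$.)
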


For the proof, we use Theorem \ref{thm-3} below.
For an oriented knot $K$ in $\R^3$,
the \textit{knot DGA} is defined combinatorially from a braid representation of a knot.
In this paper, we refer to a definition  in \cite{N-comb, EENS-filt, N-intro} with coefficients in $\Z[\lambda^{\pm},\mu^{\pm},U^{\pm}]$.
Fix a field $\bold{k}$.
From the set of augmentations of the knot DGA of $K$ to $\bold{k}$, following \cite[Definition 5.1]{N-intro}, we will define the \textit{augmentation variety} $V_{\bfk}(K) \subset (\bold{k}^*)^3$ in Definition \ref{def-aug-var}.
Let $(x,y,Z)$ be the coordinate of $\bold{k}^3$.
\begin{thm}[Corollary \ref{cor-aug}]\label{thm-3}
For $\varphi\in \Ham_c(T^*\R^3)$ and oriented knots $K_0$ and $K_1$ in $\R^3$,  suppose that $\varphi(L_{K_0})$ intersects $\R^3$ cleanly along $K_1$.
Then, there exist $a\in \{\pm 1\}$ and integers $b,c$ and $d$ for which we have an injective map
\[ V_{\bfk}(K_1) \hookrightarrow V_{\bfk}(K_0) \colon (x,y,Z) \mapsto (x^ay^{b}Z^{c}, y^a Z^{d} ,Z).\]
\end{thm}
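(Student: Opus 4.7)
The strategy is to translate the clean intersection hypothesis into an exact Lagrangian cobordism in the symplectization of the unit cotangent bundle $ST^*\R^3$, and then invoke the functoriality of knot contact homology under such cobordisms, in the spirit of \cite[Theorem~1.1]{O}.

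First I would construct the relevant cobordism. Since $\varphi\in\Ham_c(T^*\R^3)$, the Lagrangian $\varphi(L_{K_0})$ agrees with $L_{K_0}$ outside a compact set and hence has the same asymptotic Legendrian $\Lambda_{K_0}\subset ST^*\R^3$ at infinity. Using the standard clean-intersection normal form near $K_1$, one performs a local Lagrangian smoothing of $\varphi(L_{K_0})\cup\R^3$ along $K_1$, and then removes small tubular neighborhoods of $K_1$ and of the end at infinity. This produces an exact Lagrangian cobordism $\Sigma\subset\R\times ST^*\R^3$ with positive end $\Lambda_{K_0}$ and negative end $\Lambda_{K_1}$, exactly as in the construction underlying \cite[Theorem~1.1]{O}.

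Next I would apply SFT functoriality. Such a cobordism induces a DGA morphism $\Phi\colon \mathcal{A}(K_0)\to \mathcal{A}(K_1)$, where $\mathcal{A}(K_i)$ denotes the knot DGA over $\Z[\lambda^{\pm},\mu^{\pm},U^{\pm}]$, and therefore a pullback of augmentations $\aug(K_1)\to\aug(K_0)$. Passing to augmentation varieties in $(\bfk^*)^3$ yields the desired map $V_{\bfk}(K_1)\to V_{\bfk}(K_0)$.

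The main technical step is then to identify how $\Phi$ acts on the coefficients $\mu$, $\lambda$, $U$. These are read off from the inclusions of the ends into $\Sigma$ via the induced maps on relative homology and the symplectic area class. Because both $\Lambda_{K_0}$ and $\Lambda_{K_1}$ are the unit conormal of a single-component knot, their meridians correspond to small linking circles around a point of the respective knot and generate the same $\Z$-summand of $H_1(\Sigma;\Z)$ up to an orientation sign, forcing $\Phi(\mu)=\mu^a\lambda^b U^c$ with $a\in\{\pm 1\}$. The longitude of $K_0$ becomes null-homologous in $\Sigma$ after a framing correction by multiples of $\lambda$ and $U$, but receives no $\mu$ contribution since $\mu$ already accounts for the generator of the normal direction; the exponent of $\lambda$ is forced to agree with that of $\mu$ by orientation, giving $\Phi(\lambda)=\lambda^a U^d$. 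The class $U$, coming from the ambient $H_2$ of the symplectization pair, is preserved, so $\Phi(U)=U$. Evaluating an augmentation with values $(x,y,Z)=(\epsilon(\mu),\epsilon(\lambda),\epsilon(U))$ against $\Phi$ produces exactly $(x^a y^b Z^c,\, y^a Z^d,\, Z)$.

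Injectivity is immediate from the upper-triangular form of this formula: one recovers $Z$ from the third coordinate, then $y$ from $y^a Z^d$ with $a=\pm 1$, then $x$ from $x^a y^b Z^c$. The hard part, as in \cite{O}, will be the relative-homology bookkeeping establishing the precise form of $\Phi(\mu)$ and $\Phi(\lambda)$: namely, that the meridian pulls back with exponent exactly $\pm 1$ and that the longitude image contains no stray factor of $\mu$. This requires a careful analysis of $H_1(\Sigma;\Z)$ together with the framing data encoded by $\lambda$, refined over the full coefficient ring including $U$ rather than only the $(\mu,\lambda)$ subring treated in \cite{O}.
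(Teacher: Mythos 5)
Your overall architecture matches the paper's: build an exact Lagrangian cobordism in $\R\times S^*\R^3$ from $\Lambda_{K_1}$ to $\Lambda_{K_0}$, apply SFT functoriality to get a DGA map, and track the induced map on coefficient rings via relative homology. However, your homological bookkeeping reverses the roles of the meridian and the longitude, and this is exactly the step the theorem hinges on. The correct picture (the paper's Lemma \ref{lem-i-pm}) is: the \emph{meridian} of the positive end maps to $\pm$ the meridian of the negative end with \emph{no} longitude contribution --- because a disk in $\varphi(L_{K_0})$ bounding the meridian circle meets $K_1$ transversely with algebraic count $\pm 1$, and each intersection point contributes one meridian of $K_1$ --- while it is the \emph{longitude} that picks up a meridian correction $\mu^b$ (the framing ambiguity). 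You assert the opposite ($\Phi(\mu)=\mu^a\lambda^bU^c$, $\Phi(\lambda)=\lambda^aU^d$) and then compensate by setting $(x,y)=(\epsilon(\mu),\epsilon(\lambda))$, which contradicts Definition \ref{def-aug-var} where $x=\epsilon(\lambda)$ and $y=\epsilon(\mu)$. With the paper's coordinates your argument would produce a map of the form $(x,y,Z)\mapsto(x^aZ^d,\,x^by^aZ^c,\,Z)$, which is not the stated map, and the difference is not cosmetic: the downstream applications (Proposition \ref{prop-unknot-aug} and the main theorems) use the projection $\pi_{y,Z}$ and depend essentially on the second coordinate $y^aZ^d$ being free of $x$.

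There is a second gap: you take for granted that the meridian exponent is $\pm 1$ ("up to an orientation sign"). A priori the class $[K_1]\in H_1(\varphi(L_{K_0}))\cong\Z\cdot[K_0]$ could be any integer multiple of the generator; proving that it is $\pm[K_0]$ (and that the relevant embeddings preserve orientation) is Proposition \ref{prop-Floer}, which requires a separate Lagrangian Floer cohomology computation over $\F_2[\lambda^{\pm}]$ comparing $\varphi$ with the identity. Two smaller points: the cobordism is obtained simply as $F^{-1}(\varphi(L_{K_0})\setminus K_1)$, using that $\varphi(L_{K_0})$ coincides with $L_{K_1}$ near the zero section --- no Lagrangian smoothing of $\varphi(L_{K_0})\cup\R^3$ is involved, and such a surgery would produce a filling of $\Lambda_{K_0}$ rather than a cobordism with negative end $\Lambda_{K_1}$; and since $V_{\bfk}$ is defined from the combinatorial knot DGA, you also need to route the DGA map through the stable tame isomorphisms of \cite{EENS-filt, N-intro} together with the spin-structure base changes, which is where the precise signs in $\psi$ come from.
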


This is a refinement of \cite[Theorem 1.1]{O}, and the proof is parallel to \cite{O} except an enhancement of the coefficient ring.
We use the results by Ekholm-Etnyre-Ng-Sullivan \cite{EENS, EENS-filt} which show that the Chekanov-Eliashberg DGA of the unit conormal bundle of $K_i$ is stable tame isomorphic to the knot DGA of $K_i$ for $i\in \{0,1\}$, and we
compose the stable tame isomorphisms with a DGA map associated to an exact Lagrangian cobordism.
For more on the coefficient ring, see Subsection \ref{subsubsec-CE-conormal}.

\begin{rem}\label{rem-1}
Identifying $\{(x,y,Z)\in (\bfk^*)^3 \mid Z=1\}$ with $(\bfk^*)^2$,
we obtain $V_{\bfk}(K)\cap \{Z=1\} \subset (\bfk^*)^2$, which is a version of augmentation variety used in \cite{O}.
This corresponds to the reduction of the coefficient ring of the knot DGAs from $\Z[\lambda^{\pm},\mu^{\pm},U^{\pm}]$ to $\Z[\lambda^{\pm},\mu^{\pm}]$ by setting $U=1$.
It follows from Theorem \ref{thm-3} that there exists an injective map
\[
 V_{\bfk}(K_1) \cap \{Z=1\} \hookrightarrow V_{\bfk}(K_0) \cap\{Z=1\} \colon (x,y) \mapsto (x^ay^{b},y^a)
\]
for some $a\in \{\pm1\}$ and $b\in \Z$. This is a slight refinement of \cite[Theorem 1.1]{O},
but it gives no constraint on $K_0$ if $K_1=O$, where $O$ is the unknot. The reason is that a relation  $V_{\bold{k}}(O) \cap \{Z=1\}  \subset V_{\bold{k}}(K) \cap\{Z=1\}$ automatically holds for any knot $K$.
See \cite[Section 5.2]{N}
\end{rem}


Compared with \cite{O}, where $V_{\C}(K_i) \cap \{Z=1\}\subset(\C^*)^2$ for  $i\in \{0,1\}$ were used, a novel point of this paper lies in an algebraic argument to deduce Theorem \ref{thm-1} from Theorem \ref{thm-3}. 

Let us explain the idea in the case $K=K'\# T_{(2,2m+1)}$ for $m\in \Z_{\geq 1}$.
It suffices to show that for some field $\bold{k}$, $V_{\bfk}(O)$ cannot be embedded into $V_{\bfk}(K'\# T_{(2,2m+1)})$ by the map of Theorem  \ref{thm-3}.
From the knot DGA of $O$, which is quite simple, it is easy to compute that $V_{\bfk}(O)=\{ Z- x - y + xy=0 \}$.
However, determining $V_{\bfk} (K'\# T_{(2,2m+1)})$ is hard because the knot DGA is complicated in general.
As a matter of fact, for the closure of a braid with $n$ strands, the $0$-th homology of its knot DGA has $n(n-1)$ generators and $2n^2$ relations. See Subsection \ref{subsubsec-HC0}.


To solve this problem, we use an algebraic constraint on $V_{\bfk}(K'\#T_{(2,2m+1)})$.
In Section \ref{subsec-thm1}, by focusing on specific two relations in the $0$-th homology, we will extract a polynomial $P_m\in  \Z[\mu^{\pm},U^{\pm}][T]$ defined inductively on $m$ with the following property for any $K'$:
\[ V_{\bfk}(K'\# T_{(2,2m+1)}) \subset \{ (x,y,Z)  \mid \rest{P_m}{\mu=y,U=Z} \in \bfk[T] \text{ has a root in }\bold{k} \}. \]
Then, the proof of Theorem \ref{thm-1} is reduced to finding $(x,y,Z)\in V_{\bfk}(O)$ such that the $\bfk$-coefficient polynomial $\rest{P_m}{\mu=y^aZ^d,U=Z}$ does not have a root in $\bfk$.
It is worth noting that this approach does not work if $\bold{k}$ is algebraically closed.
Actually, we take $\bfk=\Q$ and apply Hilbert's irreducibility theorem in the proof of Theorem \ref{thm-main}.

In the case  $K=K'\#4_1$, we can explicitly find a polynomial (of degree $3$ in $T$) with a similar property.
We take $\bfk=\Q$ and use the rational root theorem in the proof of Theorem \ref{thm-eight}.

\

\noindent
\textbf{Organization of paper.}
In Section \ref{sec-CE}, for a contact manifold $Y$ and its Legendrian submanifold $\Lambda$ satisfying certain conditions, we introduce the Chekanov-Eliashberg DGA of $(Y,\Lambda)$ with coefficients in $\Z[H_2(Y,\Lambda)]$. We then define a DGA map associated to an exact Lagrangian cobordism in $\R\times Y$.
In Section \ref{subsec-DGA-clean}, we will see how to apply this DGA map to a setting with clean Lagrangian intersection.
In Section \ref{sec-proof}, we combine the argument in Section \ref{subsec-DGA-clean} with the results of \cite{EENS-filt, N-intro} about the knot DGA with coefficients in $\Z[\lambda^{\pm},\mu^{\pm},U^{\pm}]$.
Theorem \ref{thm-3} is proved in Section \ref{subsec-general}.
Theorem \ref{thm-1} for $K=K' \# T_{(2,2m+1)}$ with $m\neq 0,-1$ (resp. $K=K' \# 4_1$) is proved by examining the knot DGA of $K' \# T_{(2,2m+1)}$ in Section \ref{subsec-thm1} (resp. $K' \# 4_1$ in Section \ref{subsec-thm2}).
In Appendix \ref{sec-Floer}, by using Lagrangian Floer cohomology group over $\F_2[\lambda^{\pm}]$, we prove Proposition \ref{prop-Floer}, which gives a homological constraint on $1$-dimensional clean intersections.
This strengthen a result in \cite[Section 2]{O}. Nevertheless, the present proof is much simpler than that of \cite{O}.

\

\noindent
\textbf{Convention.}
In this paper, we take $\Z$ as the coefficient ring of the singular homology groups, unless otherwise mentioned.

\

\noindent
\textbf{Acknowledgements.}
The author would like to thank Kei Irie and Kaoru Ono for making valuable comments.
He would also like to thank Fumikazu Nagasato and Kouki Sato for the discussion before starting this work and giving some insight from knot theory.
He would also like to thank Lenhard Ng for the discussion during CBMS summer school 2025 and suggestions for improving the main result in the previous version.
The author appreciates the referee's valuable comments.
This work was supported by JSPS KAKENHI Grant Number JP23KJ1238.
The author is currently supported by  JSPS KAKENHI Grant Number
JP25KJ0270.

\section{Symplectic field theory for exact Lagrangian cobordisms}\label{sec-CE}

We fix a $2n$-dimensional simply connected manifold $P$ with a Liouville $1$-form $\lambda$ such that $P$ is the completion of a compact Liouville domain $(\bar{P},\rest{\lambda}{\bar{P}})$ satisfying $2 c_1(T\bar{P})=0$.
Let us consider
$ Y\coloneqq P\times \R$ and $ \alpha \coloneqq dz-\lambda$, 
where $z$ is the coordinate of $\R$.
Then, $\alpha$ is a contact form on $Y$ and the Reeb vector field on $Y$ with respect to $\alpha$ agrees with $\partial_z$. Let $\pi_P \colon Y \to P$ denote the projection to $P$.

In this section, we present (a version of) \textit{symplectic filed theory} introduced by Eliashberg-Givental-Hofer \cite{EGH}.
The arguments from Section \ref{subsec-notation} to Section \ref{subsec-DGA-map} are parallel to those in \cite[Section 3]{O}, except the difference of the coefficient rings.
In Section \ref{subsec-DGA-clean}, we will discuss a particular case related to clean Lagrangian intersections.

\subsection{Preliminaries}\label{subsec-notation}

We fix a complex structure $j_0$ on $\R\times [0,1]$ such that $(j_0)_{(s,t)}(\partial_s)=\partial_t$, where $(s,t)$ is the coordinate of $\R\times [0,1]$.
Let $D\coloneqq \{z\in \C \mid |z| \leq 1\}$. For any $m\in \Z_{\geq 0}$, we fix $p_0,p_1,\dots ,p_m\in \partial D$ ordered counterclockwise and define $D_{m+1} \coloneqq D\setminus \{p_0,p_1, \dots ,p_m \}$. Let $\mathcal{C}_{m+1}$ denote the space of conformal structures on $D_{m+1}$ which can be extended smoothly on $D$.
Given $\kappa \in \mathcal{C}_{m+1}$, we let $j_{\kappa}$ denote a complex structure on $D_{m+1}$ which associates $\kappa$. Near each boundary puncture of the Riemann surface $(D_{m+1},j_{\kappa})$, we fix a strip coordinate by a biholomorphic map
\[\begin{array}{cc}
\psi_0 \colon [0,\infty)\times [0,1] \to D_{m+1}, &
\psi_k  \colon (-\infty ,0] \times [0,1] \to D_{m+1} \text{ for }k\in \{1,\dots ,m,\}
\end{array}\]
with respect to $j_0$ and $j_{\kappa}$ such that $\lim_{s\to \infty}\psi_0(s,t) = p_0$ and $\lim_{s\to -\infty} \psi_k(s,t) = p_k$ uniformly on $t\in [0,1]$.

Let $\overline{D_{m+1}}$ be a compactification of $D_{m+1}$ obtained by gluing $[0,\infty]\times [0,1]$ via $\psi_0$ and $[-\infty,0]\times [0,1]$ via $\psi_k$ for every $k=1,\dots ,m$.
In a natural way, $\overline{D_{m+1}}$ becomes a compact oriented surface with boundary and corners.

Let $\Lambda$ be a compact and connected Legendrian submanifold of $(Y,\alpha)$.
Denote the set of its Reeb chords by
\[\textstyle{ \mathcal{R}(\Lambda) \coloneqq \{ a\colon [0,T]\to Y \mid T>0,\  \frac{d a}{dt}(t)= (\partial_z)_{a(t)} \text{ for every }t\in [0,T] \text{ and }a(0),a(T) \in \Lambda \}. }\]
Hereafter, for any Reeb chord $(a\in [0,T]\to Y)\in \mathcal{R}(\Lambda)$, we write $T= \int a^* \alpha$ as $T_a$.
For each $a \in \mathcal{R}(\Lambda)$, we fix a continuous map $w_a\colon \overline{D_1} \to Y$ such that $w_a(\partial D_1) \subset \Lambda$ and $\lim_{s\to \infty} w_a \circ \psi_0(s,t) = a(T_at)$
for every $t\in [0,1]$.

\subsection{Moduli space of pseudo-holomorphic curves with boundary in an exact Lagrangian cobordism}\label{subsec-moduli}

Let $\Lambda_+$ and $\Lambda_-$ be two compact Legendrian submanifolds of $(Y,\alpha)$.
Recall that  a submanifold $L$ of $\R\times Y$ is called an \textit{exact Lagrangian cobordism} from $\Lambda_-$ to $\Lambda_+$ if the following hold:
\begin{itemize}
\item There exist $r_+,r_-\in \R$ with $r_-<r_+$ such that
\[\begin{array}{ll} L\cap ([r_+,\infty) \times Y) = [r_+,\infty) \times \Lambda_+, &  L \cap ((-\infty,r_-]\times Y) = (-\infty ,r_-]\times \Lambda_- , \end{array}\]
and $L \cap ([r_-,r_+]\times Y)$ is compact.
\item There exists a $C^{\infty}$ function $f\colon L\to \R$ such that $df = \rest{(e^r\alpha)}{L}$ and $f$ is constant when restricted on $(-\infty,r_-]\times \Lambda_-$.
Here, $r$ is the $\R$-coordinate of $\R\times Y$.
\end{itemize}
In the rest of this paper, we only consider the case where $\Lambda_-$ is connected. Therefore, we do not need to care about the last condition that $\rest{f}{(-\infty,r_-]\times \Lambda_-}$ is a constant function since
\[\rest{df}{(-\infty,r_-]\times \Lambda_-} = \rest{(e^r \alpha)}{(-\infty,r_-]\times \Lambda_-} =0,\]
which implies that $\rest{f}{(-\infty,r_-]\times \Lambda_-}$ is locally constant.

Associated to $L$, we use the following notation for maps between pairs of spaces:
\begin{align}\label{inclusion-pm}
\begin{array}{cc}
  i_+  \colon (Y, \Lambda_+) \to (\R\times Y, L) \colon x \mapsto (r_+,x) , &
  i_-  \colon (Y, \Lambda_-) \to (\R\times Y,  L) \colon x \mapsto (r_-,x) ,
  \end{array}
\end{align}
and
\[R_L \colon (\R\times Y,  L) \to ([r_-,r_+]\times Y, L\cap ([r_-,r_+]\times Y))\] 
is a strong deformation retract such that $R_L(r,x) = (r_+,x)$ if $r\geq r_+$ and 
$R_L(r,x) = (r_-,x)$ if $r\leq r_-$.
In addition, 
for every $s\in \R$, we define a translation map
\[\tau_s \colon \R\times Y \to \R\times Y \colon (r,x) \mapsto (r+s,x).\]

Let $J$ be an almost complex structure on $P$ such that $d\lambda(\cdot, J \cdot)$ is a Riemannian metric on $P$ and \textit{adapted} to $\Lambda$ in the sense of the definition in \cite[Section 2.3]{EES}. 
We associate to $J$ an almost complex structure $\tilde{J}$ on $\R\times Y$ determined by $\tau_s^* \tilde{J} = \tilde{J}$ for every $s\in \R$, $\tilde{J}(\ker \alpha) = \ker \alpha$,\ $(\pi_P)_*(\tilde{J}(v)) =J (  (\pi_P)_*(v))$ for every $v\in \ker \alpha$ and $\tilde{J}(\partial_z) = \partial_r$.

For any $a\in \mathcal{R}(\Lambda_+)$, $b_1,\dots ,b_m\in \mathcal{R}(\Lambda_-)$ and $C \in H_2(\R\times Y, L)$, we define
$\hat{\M}_{L,C}(a;b_1,\dots ,b_m)$
to be the space of pairs $(u,\kappa)$ of $\kappa \in \mathcal{C}_{m+1}$ and a $C^{\infty}$ map $u\colon D_{m+1} \to \R\times Y$ such that:
\begin{itemize}
\item $d u + \tilde{J}\circ du \circ j_{\kappa} =0$.
\item $u( \partial D_{m+1}) \subset L$.
\item There exist $s_0,s_1,\dots ,s_m \in \R$ such that
\[\begin{array}{cc}
 \lim_{s\to \infty} \tau_{-s} \circ u \circ \psi_0  = (s_0, a(T_a(1-t)) ) , &
  \lim_{s\to -\infty} \tau_{s} \circ u \circ \psi_k   = (s_k, b_k(T_{b_k}(1-t)) ) 
 \end{array}\]
for $k\in \{1,\dots ,m\}$, $C^{\infty}$-uniformly on $[0,1]$. 
\item 
Let $\bar{u}\colon \overline{D_{m+1}} \to L$ denote the continuous extension of the map $R_L \circ u \colon D_{m+1} \to L$.
Then, the 2-chain 
\[\bar{u} + (i_+\circ w_a) -( (i_-\circ w_{b_1}) + \dots + (i_-\circ w_{b_m})) \]
represents the homology class $C$. (More precisely, each term is regarded as a singular 2-chain after taking suitable triangulations of $\overline{D_{m+1}}$ and $\overline{D_1}$.)
\end{itemize}
When $m \leq 1$, $\mathcal{C}_{m+1}$ consists of a single element and the group $\mathcal{A}_{m+1}$ of conformal automorphisms on $D_{m+1}$ acts on $\hat{\M}_{L,C}(a;b_1,\dots ,b_m)$ by $(u,\kappa)\cdot \varphi \coloneqq  (u\circ \varphi ,\kappa)$ for every $\varphi \in \mathcal{A}_{m+1}$. 
Let us define a moduli space to be
\[ \M_{L,C} (a;b_1,\dots ,b_m) \coloneqq \begin{cases} 
\hat{\M}_{L,C}(a;b_1,\dots ,b_m) / \mathcal{A}_{m+1} & \text{ if } m=0,1, \\
\hat{\M}_{L,C}(a;b_1,\dots ,b_m) & \text{ if } m\geq 2.
\end{cases}\]

In the case where $\Lambda_+=\Lambda_- = \Lambda$ and $L = \R\times \Lambda$, for any $a,b_1,\dots ,b_m\in \mathcal{R}(\Lambda)$ and $A\in H_2(Y,\Lambda)$, an $\R$-action on $\M_{\R\times \Lambda , (i_+)_*(A)}(a;b_1,\dots ,b_m)$ is defined by $s \cdot (u,\kappa)\coloneqq (\tau_{s}\circ u, \kappa)$ for every $s\in \R$. Let us denote the quotient space by
\[ \bar{\M}_{\Lambda,A}(a;b_1,\dots ,b_m) \coloneqq  \R\backslash \M_{\R\times \Lambda , (i_+)_*(A)}(a;b_1,\dots ,b_m).\]

\subsection{Chekanov-Eliashberg DGA over $\Z[H_2(Y,\Lambda)]$}\label{subsec-CE-DGA}

We consider a compact Legendrian submanifold $\Lambda$ of $(Y,\alpha)$ satisfying the following conditions:
\begin{enumerate}
\item $\Lambda$ is a connected and oriented manifold and admits a spin structure.
\item For the immersion $\rest{\pi_P}{\Lambda} \colon \Lambda \to P$, the self-intersection of $\pi_P(\Lambda)$ consists only of transverse double points.
\item For the immersed Lagrangian submanifold $\pi_P(\Lambda)$ of $P$, its Maslov class  vanishes.
\end{enumerate}
The second condition is satisfied for generic $\Lambda$.
Under the second condition, the Conley-Zehnder index $\mu(a) \in \Z$ of $a$ is defined by using the capping path $\rest{w_a}{\partial D_1}$ for $a$ as in \cite[Section 3.1]{O}. 
We then define
$|a|\coloneqq \mu(a) -1$.
We also assume that $\Lambda$ is \textit{admissible} in the sense of the definition in \cite[Section 2.3]{EES}. 
Let us fix a spin structure $\mathfrak{s}$ on $\Lambda$.

We take an almost complex structure $J$ on $P$ as in \cite[Proposition 2.3]{EES} such that the following hold for every $a,b_1,\dots ,b_m\in \mathcal{R}(\Lambda)$ and $A\in H_2(Y,\Lambda)$ such that $(b_1,\dots ,b_m)\neq (a)$:
The moduli space $\bar{\M}_{\Lambda,A}(a;b_1,\dots ,b_m)$ is cut out transversely and it is a manifold of dimension $|a|-(|b_1|+\dots +|b_m|)-1$. Moreover, when $|b_1|+\dots + |b_m|=|a|-1$, the moduli space
\[\coprod_{A\in H_2(Y,\Lambda)}\bar{\M}_{\Lambda,A}(a;b_1,\dots ,b_m)\]
is a compact $0$-dimensional manifold.
(For the relation between $J$-holomorphic disks in $P$ defined as in \cite[Section 2.3]{EES} and $\tilde{J}$-holomorphic maps in $\R\times Y$ defined in Section \ref{subsec-moduli}, see \cite[Theorem 2.1]{DR} and \cite[Appendix A.4]{O}.)

In addition, $\bar{\M}_{\Lambda,A}(a;b_1,\dots ,b_m)$ has an orientation depending on $\mathfrak{s}$ and a choice of an orientation of the determinant line of the capping operator for each Reeb chord of $\Lambda$.
See \cite[Definition A.4]{O} for details which follow the conventions in \cite{EES-ori} related to orientations.

Let $\mathcal{A}_*(\Lambda)$ be the unital graded $\Z[H_2(Y,\Lambda)]$-algebra freely generated by $\mathcal{R}(\Lambda)$ whose grading is given by $|\cdot|\colon \mathcal{R}(\Lambda) \to \Z$.
A derivation
$\partial_{J,\mathfrak{s}} \colon \mathcal{A}_*(\Lambda) \to \mathcal{A}_{*-1}(\Lambda)$ over $\Z [H_2(Y,\Lambda)]$
is defined for every $a\in \mathcal{R}(\Lambda)$ by
\[\partial_{J,\mathfrak{s}} (a)\coloneqq \sum_{|b_1|+\dots + |b_m|=|a|-1} \sum_{A\in H_2(Y,\Lambda)} (-1)^{(n-1)(|a|+1)} \left( \#_{\sign} \bar{\M}_{\Lambda, A}(a;b_1,\dots ,b_m) \right) e^A \cdot b_1\cdots b_m  \]
and extended by the Leibniz rule.
Here, $\#_{\sign}$ is the number of the points counted with the
signs induced by the orientation.

We refer to \cite[Theorem 4.1, Theorem 4.12]{EES-ori} which takes $\Z[H_1(\Lambda)]$ as the coefficient ring of the Chekanov-Eliashberg DGA. We modify the arguments in their proof to the present case where we take $\Z[H_2(Y,\Lambda)]$ as the coefficient ring.
Then, we obtain the following theorem.

\begin{thm}\label{thm-inv-hmgy}
$\partial_{J,\mathfrak{s}}$ is a differential, that is, $\partial_{J,\mathfrak{s}}\circ \partial_{J,\mathfrak{s}} =0$. Moreover,
the stable tame isomorphism class over $\Z[H_2(Y,\Lambda)]$ of the DGA $(\mathcal{A}_*(\Lambda),\partial_{J,\mathfrak{s}})$ is independent of the choice of $J$ and invariant under Legendrian isotopies of $\Lambda$ preserving spin structures.
\end{thm}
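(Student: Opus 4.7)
My plan is to follow the proof strategy of \cite[Theorems 4.1 and 4.12]{EES-ori}, which establishes the analogous statements with $\Z[H_1(\Lambda)]$ coefficients, and to verify that their arguments transport with only cosmetic changes to the present coefficient ring $\Z[H_2(Y,\Lambda)]$. The analytic ingredients (SFT compactness in a symplectization, transversality for generic adapted $J$, and gluing) together with the sign conventions of \cite{EES-ori} are unchanged; only the homological bookkeeping of the weight $e^A$ needs to be re-examined.

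For the identity $\partial_{J,\mathfrak{s}}^{2}=0$, I would fix a generator $a\in \mathcal{R}(\Lambda)$ and analyze the $1$-dimensional components of $\coprod_{A}\bar{\M}_{\Lambda,A}(a;b_1,\dots,b_m)$ with $|a|-\sum_i|b_i|=2$. Simple-connectedness of $P$, exactness of $\alpha$, and the vanishing $2c_1(T\bar{P})=0$ rule out interior disk and sphere bubbling, so the SFT compactification adds only two-level broken configurations: an upper disk $u_1$ with positive puncture at $a$ and a distinguished negative puncture at some intermediate Reeb chord $c$, glued to a lower disk $u_2$ with positive puncture at $c$. The new check is that $[u_1]+[u_2]=A$ in $H_2(Y,\Lambda)$; this holds because, in the cylindrical cobordism $L=\R\times\Lambda$, the inclusions $i_\pm$ induce the same map on $H_2$, and the capping disk $w_c$ enters the relative $2$-cycles representing $[u_1]$ and $[u_2]$ with opposite signs and cancels upon concatenation. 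Consequently each broken endpoint contributes $e^{[u_1]}e^{[u_2]}=e^A$ to the coefficient in $\partial^{2}(a)$, and pairing endpoints of the $1$-dimensional components with the signs of \cite{EES-ori} forces the sum to vanish.

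For the invariance of the stable tame isomorphism class, I would take a generic smooth path of admissible data $(J_t,\Lambda_t,\mathfrak{s}_t)_{t\in[0,1]}$ connecting two choices and analyze the parametrized moduli spaces of expected dimension zero. Their codimension-one degenerations are the elementary moves enumerated in \cite{EES-ori}: handle slides, births or deaths of canceling Reeb chord pairs, and triple-point (or self-tangency) moves. Each produces a tame isomorphism or a stabilization between the two DGAs, and the homology-class additivity argument above again guarantees that the induced maps are well-defined as morphisms over $\Z[H_2(Y,\Lambda)]$. Composing across the path yields a stable tame isomorphism, and its class is shown to be independent of the chosen path by the standard doubly parametrized argument on generic $2$-parameter families.

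The main obstacle I anticipate is pure bookkeeping: at every breaking, gluing, or handle-slide event, the weight $e^A$ must be compatible with the elementary move, and this reduces in each case to the observation that the capping chains $w_a$ are fixed once and for all, so intermediate Reeb chords appear exactly twice with cancelling contributions in $H_2(Y,\Lambda)$. No new analytic input should be required beyond what is already cited from \cite{EES-ori} and recalled in Section \ref{subsec-moduli}.
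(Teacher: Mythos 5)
Your proposal is correct and follows essentially the same route as the paper, which itself gives no detailed proof but simply cites \cite[Theorems 4.1 and 4.12]{EES-ori} and states that the arguments are modified to replace the coefficient ring $\Z[H_1(\Lambda)]$ by $\Z[H_2(Y,\Lambda)]$. Your additional verification that the relative homology classes add correctly at breakings (the capping chain $w_c$ of an intermediate chord appearing once with each sign and cancelling) is exactly the bookkeeping point implicit in the paper's remark, so the proposal supplies slightly more detail than the paper does while taking the identical approach.
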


Let us discuss the effect by changing the spin structure on $\Lambda$.
For any two spin structures $\mathfrak{s}_0$ and $\mathfrak{s}_1$ on $\Lambda$,
a cohomology class
$ d(\mathfrak{s}_0,\mathfrak{s}_1) \in H^1(\Lambda;\Z/2)$ is defined as in \cite[Subsection 4.4.1]{EES-ori}.
Let $\partial \colon H_2(Y,\Lambda) \to H_1(\Lambda)$ denote the boundary map.
For any $\beta \in H_1(\Lambda)$, if we change the choice of a spin structure from $\mathfrak{s}$ to $\mathfrak{s}'$, the orientation on the moduli space
\[ \coprod_{|b_1|+\dots + |b_m|=|a|-1} \coprod_{A\in H_2(Y,\Lambda) \colon \partial A = \beta} \bar{\M}_{\Lambda,A}(a;b_1,\dots ,b_m)\]
is changed by the sign $(-1)^{ \la d(\mathfrak{s},\mathfrak{s}') ,\beta\ra }$. See the proof of \cite[Theorem 4.29]{EES-ori}. We define a ring isomorphism
\begin{align}\label{change-spin}
\psi(\mathfrak{s},\mathfrak{s}') \colon \Z[H_2(Y,\Lambda)] \to \Z[H_2(Y,\Lambda)] \colon e^A \mapsto (-1)^{ \la d(\mathfrak{s},\mathfrak{s}) ,\partial A\ra } e^A .
\end{align}
Then, the next proposition holds as in the proof of \cite[Theorem 4.30]{EES-ori}.
\begin{prop}\label{prop-spin}
We define a graded ring isomorphism $I(\mathfrak{s},\mathfrak{s}') \colon \mathcal{A}_*(\Lambda) \to \mathcal{A}_*(\Lambda)$ by
\[
I(\mathfrak{s},\mathfrak{s}') (k\cdot a) = \psi(\mathfrak{s},\mathfrak{s}')(k)\cdot a
\]
for every $k\in \Z[H_2(Y,\Lambda)]$ and $a\in \mathcal{R}(\Lambda)$. Then
 $\partial_{J,\mathfrak{s}'}\circ I(\mathfrak{s},\mathfrak{s}') = I(\mathfrak{s},\mathfrak{s}')  \circ \partial_{J,\mathfrak{s}}$.
\end{prop}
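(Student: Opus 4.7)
The plan is to derive the stated identity as a direct bookkeeping consequence of the orientation sign-change formula for moduli spaces recalled in the paragraph preceding the proposition; no new geometric input is needed.

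First I would verify that $I(\mathfrak{s},\mathfrak{s}')$ is a well-defined grading-preserving ring isomorphism of $\mathcal{A}_*(\Lambda)$. The map $\psi(\mathfrak{s},\mathfrak{s}')$ is a ring automorphism of $\Z[H_2(Y,\Lambda)]$ because it is defined on the basis $\{e^A\}$ by a sign depending only on the homology class $\partial A \in H_1(\Lambda)$, and $\partial$ is a group homomorphism; in particular $\psi(\mathfrak{s},\mathfrak{s}')$ respects the multiplicative structure of the coefficient ring. Since $I(\mathfrak{s},\mathfrak{s}')$ fixes each Reeb chord generator and acts on coefficients by $\psi(\mathfrak{s},\mathfrak{s}')$, it preserves the grading $|\cdot|$ and is multiplicative.

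Next I would reduce the identity $\partial_{J,\mathfrak{s}'}\circ I(\mathfrak{s},\mathfrak{s}') = I(\mathfrak{s},\mathfrak{s}')\circ \partial_{J,\mathfrak{s}}$ to verifying it on single generators $a \in \mathcal{R}(\Lambda)$. Both $\partial_{J,\mathfrak{s}}$ and $\partial_{J,\mathfrak{s}'}$ are graded $\Z[H_2(Y,\Lambda)]$-linear derivations, and $I(\mathfrak{s},\mathfrak{s}')$ is a graded ring homomorphism that twists only the coefficient ring. A short application of the graded Leibniz rule shows that if the two sides agree on every $a$, then they automatically agree on arbitrary monomials $k\cdot b_1\cdots b_m$ with $k\in \Z[H_2(Y,\Lambda)]$ and $b_j\in \mathcal{R}(\Lambda)$, and hence by linearity on all of $\mathcal{A}_*(\Lambda)$.

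On a fixed generator $a$, I would write out
\[
\partial_{J,\mathfrak{s}'}(a) = \sum_{|b_1|+\dots+|b_m|=|a|-1}\sum_{A\in H_2(Y,\Lambda)} (-1)^{(n-1)(|a|+1)}\bigl(\#_{\sign}^{\mathfrak{s}'} \bar{\M}_{\Lambda,A}(a;b_1,\dots,b_m)\bigr)\,e^A\cdot b_1\cdots b_m,
\]
group the inner sum by $\beta \coloneqq \partial A \in H_1(\Lambda)$, and apply the orientation-flip statement recalled before the proposition to get $\#_{\sign}^{\mathfrak{s}'} \bar{\M}_{\Lambda,A}(a;b_1,\dots,b_m) = (-1)^{\la d(\mathfrak{s},\mathfrak{s}'),\partial A\ra}\,\#_{\sign}^{\mathfrak{s}} \bar{\M}_{\Lambda,A}(a;b_1,\dots,b_m)$. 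By the definition of $\psi(\mathfrak{s},\mathfrak{s}')$ on $e^A$, this turns the coefficient of each monomial into exactly the image under $I(\mathfrak{s},\mathfrak{s}')$ of the corresponding term in $\partial_{J,\mathfrak{s}}(a)$, yielding the equality on generators.

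I do not expect any serious obstacle: the only substantive ingredient is the orientation-flip formula on moduli spaces with fixed $\partial A = \beta$, which is already quoted from the proof of \cite[Theorem 4.29]{EES-ori}, so the proof amounts to repackaging this sign into the commuting square and checking that the graded Leibniz rule propagates the identity from generators to the whole DGA.
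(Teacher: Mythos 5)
Your proposal is correct and follows essentially the same route as the paper, which simply defers to the proof of \cite[Theorem 4.30]{EES-ori}: the identity on a generator $a$ is exactly the orientation-flip sign $(-1)^{\la d(\mathfrak{s},\mathfrak{s}'),\partial A\ra}$ absorbed into $\psi(\mathfrak{s},\mathfrak{s}')(e^A)$, and the extension to all of $\mathcal{A}_*(\Lambda)$ follows from multiplicativity of $I(\mathfrak{s},\mathfrak{s}')$ together with the graded Leibniz rule. Your care in grouping the sum over $A$ by $\beta=\partial A$ before applying the sign formula is precisely the point that makes the bookkeeping go through.
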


\subsection{DGA map associated to an exact Lagrangian cobordism}\label{subsec-DGA-map}

We consider an exact Lagrangian cobordism $L$ from $\Lambda_-$ to $\Lambda_+$ as in Section \ref{subsec-moduli} such that the Maslov class of $L$ vanishes and both $\Lambda_+$ and $\Lambda_-$ are connected.
We assume that $L$ is oriented and has a spin structure $\mathfrak{s}_L$.
Then, on the positive end $\Lambda_+$ (resp. the negative end $\Lambda_-$) of $L$, an orientation and a spin structure $\mathfrak{s}_+$ (resp. $\mathfrak{s}_-$) are induced. See \cite[Remark 3.6]{O} for their precise descriptions.

Associated to $(\Lambda_+,\mathfrak{s}_+)$ and $(\Lambda_-,\mathfrak{s}_-)$, the Chekanov-Eliashberg DGAs
$
(\mathcal{A}_*(\Lambda_+) ,\partial_{J,\mathfrak{s}_+})$ and $ (\mathcal{A}_*(\Lambda_-) ,\partial_{J,\mathfrak{s}_-})
$
are defined as in Section \ref{subsec-CE-DGA}.
The maps $i_+$ and $i_-$ of (\ref{inclusion-pm}) induce ring homomorphisms
\[\begin{array}{ll}   (i_+)_* \colon \Z[H_2(Y,\Lambda_+)] \to \Z[H_2(\R\times Y, L)] , &
(i_-)_* \colon \Z[H_2(Y,\Lambda_-)] \to \Z[H_2(\R\times Y, L)].
\end{array}\]
By $(i_-)_*$, we obtain a DGA over $\Z[H_2(\R\times Y, L)]$
\[ (\mathcal{A}_*(\Lambda_-)\otimes_{\Z[H_2(Y, \Lambda_-)]} \Z[H_2(\R\times Y, L)] , \partial_{J,\mathfrak{s}_-} \otimes \id_{\Z[H_2(\R\times Y, L)]})\]
and we regard it as a DGA over $\Z[H_2(Y,\Lambda_+)]$ by $(i_+)_*$.

The moduli space $\M_{L,C}(a;b_1,\dots ,b_m)$ is cut out transversely for generic $J$ and it is a manifold of dimension $|a|-(|b_1|+\dots +|b_m|)$ for  $a\in \mathcal{R}(\Lambda_+)$, $b_1,\dots ,b_m \in \mathcal{R}(\Lambda_-)$ and $C\in H_2(\R\times Y,L)$.
Moreover, by the Gromov compactness, if $|b_1|+\dots + |b_m|=|a|$, the moduli space
\[\coprod_{C\in H_2(\R\times Y, L)} \M_{L,C}(a;b_1,\dots ,b_m)\]
is a compact $0$-dimensional manifold.
For the proof, see \cite[Lemma 3.8]{EHK} and \cite[Section 3.3.3]{DR-surgery}, 
In addition, it has an orientation depending on $\mathfrak{s}_L$ and a choice of an orientation of the determinant line of the capping operator for each Reeb chord of $\Lambda_+$ and $\Lambda_-$. 
See \cite[Section A.3]{O}.

We refer to the proof of  \cite[Theorem 3.7]{O} which takes $\Z[H_1(\Lambda_{\pm})]$ as the coefficient rings.
We modify the arguments in \cite[Appendix A.6]{O} to the present case where we take $\Z[H_2(Y,\Lambda_{\pm})]$ as the coefficient rings.
(As referred in \cite[Appendix A]{O}, see also \cite[Theorem 2.5]{K} about the DGA map over $\Z$, but note that some conventions in \cite{K} related to orientations are different from those in \cite{EES-ori, O}.)
Then, we obtain the following theorem.

\begin{thm}\label{thm-DGA-map}
We define a unital graded $\Z[H_2(Y , \Lambda_+)]$-algebra map
\[\Phi_L \colon \mathcal{A}_*(\Lambda_+) \to
\mathcal{A}_*(\Lambda_-) \otimes_{\Z[H_2(Y,\Lambda_-)]} \Z[H_2(\R\times Y , L)]\]
such that for every $a\in \mathcal{R}(\Lambda_+)$,
\[ \Phi_L (a) = \sum_{|b_1|+\dots + |b_m| = |a| } \sum_{ C \in H_2(\R\times Y , L) } (-1)^{(n-1)(|a|+1)+m} \left( \#_{\sign} \M_{L,C} (a;b_1,\dots ,b_m) \right) b_1\cdots b_m \otimes e^{C} . \]
Then, $\Phi_L$ is a DGA map, that is, $\Phi_L\circ \partial_{J,\mathfrak{s}_+} = (\partial_{J,\mathfrak{s}_-}\otimes \id_{\Z[H_2(\R\times Y,L)]})\circ \Phi_L$.
\end{thm}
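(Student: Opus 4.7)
The plan is to follow the standard SFT strategy and mirror the argument given in \cite[Appendix A.6]{O}, with the only substantial change being a bookkeeping of relative homology classes in $H_2$ rather than in $H_1$ of the Lagrangian boundary.

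First I would verify that $\Phi_L$ is well-defined on each generator $a\in \mathcal{R}(\Lambda_+)$. By the transversality statement for $\M_{L,C}(a;b_1,\dots,b_m)$ recalled before the theorem, together with SFT compactness, the index-$0$ stratum (where $|b_1|+\dots +|b_m|=|a|$) is cut out transversely and, for each fixed $a$ with the total action bound from the exact Lagrangian condition, only finitely many Reeb chord words and homology classes $C$ contribute. Hence the sum defining $\Phi_L(a)$ is finite in each bidegree, and the Leibniz extension to $\mathcal{A}_*(\Lambda_+)$ gives a graded $\Z[H_2(Y,\Lambda_+)]$-algebra map of degree $0$.

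Next I would prove the chain map identity $\Phi_L \circ \partial_{J,\mathfrak{s}_+} = (\partial_{J,\mathfrak{s}_-}\otimes \id)\circ \Phi_L$ by studying the compactification of the $1$-dimensional moduli space $\coprod_{C}\M_{L,C}(a;b_1,\dots,b_m)$ when $|b_1|+\dots +|b_m|=|a|-1$. By SFT compactness, the codimension-$1$ boundary is the disjoint union of two types of broken configurations: (i) a rigid $\tilde J$-holomorphic disk in $\R\times Y$ with positive puncture at $a$ and negative punctures at some Reeb chords $c_1,\dots,c_\ell\in \mathcal{R}(\Lambda_+)$, glued to rigid cobordism disks with positive punctures at the $c_j$'s and Reeb chord outputs among the $b_i$'s, corresponding to $\Phi_L\circ \partial_{J,\mathfrak{s}_+}(a)$; and (ii) a rigid cobordism disk in $L$ with a single breaking at a negative Reeb chord, glued to a rigid disk in $\R\times Y$ over $\Lambda_-$, corresponding to $(\partial_{J,\mathfrak{s}_-}\otimes \id)\circ \Phi_L(a)$. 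The signed count of the boundary must vanish, which yields the desired identity once the sign and homology class contributions of the two sides are identified.

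The main obstacle, and the only step that genuinely uses the new coefficient ring, is verifying that relative $H_2$-classes glue correctly and that the signs absorbed into $(-1)^{(n-1)(|a|+1)+m}$ still match on the nose. For homology classes, in a broken configuration of type (i) with intermediate chords $c_1,\dots,c_\ell\in \mathcal{R}(\Lambda_+)$ and homology classes $A\in H_2(Y,\Lambda_+)$ upstairs and $C_1,\dots,C_\ell\in H_2(\R\times Y,L)$ downstairs, one checks that $(i_+)_*(A)+C_1+\dots+C_\ell$ is exactly the class $C$ of the glued curve; the capping disks $w_{c_j}$ appear with opposite signs on the two sides of the gluing and cancel. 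An analogous identity holds for type (ii) boundary configurations using $(i_-)_*$ and the breaking at a chord of $\Lambda_-$. This is the direct analogue of \cite[Appendix A.6]{O}, with $\Z[H_1(\Lambda_\pm)]$ replaced throughout by $\Z[H_2(Y,\Lambda_\pm)]$ and the coefficient change-of-base performed via $(i_\pm)_*$. For signs, I would invoke the orientation conventions for capping operators and gluing from \cite{EES-ori} as adapted in \cite[Appendix A.3, A.4]{O}; the insertion of the factor $(-1)^{(n-1)(|a|+1)+m}$ is precisely what is needed to make the two contributions to the signed boundary count appear with opposite signs, exactly as in the proof of \cite[Theorem 3.7]{O}. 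Since the sign computations there are purely local and do not involve the choice of coefficient ring, they transfer verbatim.
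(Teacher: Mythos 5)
Your proposal is correct and follows essentially the same route as the paper, which itself only cites the proof of \cite[Theorem 3.7]{O} and \cite[Appendix A.6]{O} and notes that the coefficient ring is upgraded from $\Z[H_1(\Lambda_\pm)]$ to $\Z[H_2(Y,\Lambda_\pm)]$. The two points you isolate as genuinely new --- additivity of relative $H_2$-classes under SFT breaking (with the capping disks $w_{c_j}$ cancelling) and the fact that the sign analysis is local and hence insensitive to the coefficient change --- are exactly the modifications the paper has in mind.
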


\subsection{Application to clean Lagrangian intersections}\label{subsec-DGA-clean}

\subsubsection{Chekanov-Eliashberg DGA of the unit conormal bundle of  a knot in $\R^3$}\label{subsubsec-CE-conormal}

Let $(q_1,q_2,q_3)$ be the coordinate of $\R^3$.
The manifold $\R^3$ is equipped with a standard Riemannian metric $\la \cdot , \cdot \ra \coloneqq \sum_{i=1}^3 dq_i \otimes dq_i$ 
and an orientation such that $dq_1\wedge dq_2\wedge dq_3$ is a positive volume form.
Let $(p_1,p_2,p_3)$ be the coordinate of the fiber.
We denote
\begin{align*}
D^*_{r} \R^3 & \coloneqq \{(q,p) \in T^*\R^3 \mid |p| \leq r\} \text{ for }r>0, \\
S^*\R^3 & \coloneqq \{(q,p)\in T^*\R^3 \mid |p|=1\}.
\end{align*}
The canonical Liouville form on $T^*\R^3$ is given by $\lambda_{\R^3} = \sum_{i=1}^3 p_i dq_i$.

The unit cotangent bundle 
$S^*\R^3$ is endowed with a contact form $\alpha_{\R^3} \coloneqq \rest{\lambda_{\R^3}}{S^*\R^3}$.
Note that there exists a diffeomorphism
\[S^*\R^3 \to T^*S^2 \times \R \colon (q,p) \mapsto ( (p, q- \la q, p\ra p), \la q, p\ra )\]
for the unit sphere $S^2 \subset \R^3$ such that $\alpha_{\R^3}$ on $S^*\R^3$ agrees with the pullback of the $1$-form $dz-\lambda_{S^2}$ on $ T^*S^2 \times \R$, where $\lambda_{S^2}$ is the canonical Liouville form on $T^*S^2$. Therefore,
by taking $(T^*S^2 ,\lambda_{S^2})$ as $(P,\lambda)$, we can define the Chekanov-Eliashberg DGA for Legendrian submanifolds of $S^*\R^3$ as  in Section \ref{subsec-CE-DGA}. 

We focus on a special kind of Legendrian submanifolds.
For any knot $K$ in $\R^3$, let $L_K$ be its conormal bundle. Then, the \textit{unit conormal bundle} of $K$ is defined by
\[\Lambda_K \coloneqq L_K \cap S^*\R^3 = \{(q,p) \in S^*\R^3 \mid q\in K,\  \la p, v \ra =0 \text{ for every }v\in T_q K\}. \]
It is a Legendrian submanifold of $S^*\R^3$.

Let us prepare some notation.
We identify a disk bundle of $L_K$ with a tubular neighborhood of $K$ in $\R^3$ via the map $L_K\cap D^*_{\epsilon}\R^3 \to \R^3 \colon (q,p) \mapsto q+p$ for sufficiently small $\epsilon>0$.
Then, an orientation of $L_K$ is determined from the orientation of $\R^3$. We also fix a spin structure $\mathfrak{s}_{L_K}$ given by a trivial $\mathrm{Spin}(3)$-bundle on $L_K$. As the positive end of $L_K $, an orientation and a spin structure $\mathfrak{s}_K$ on $\Lambda_K$ are determined as in \cite[Remark 3.6]{O}.

We fix an orientation of $K$. 
Let $l\colon K \to \Lambda_K$ be a section which gives the zero framing of $K$ and $m\colon S^1 \to \Lambda_K$ be the meridian of $K$. The conventions are as follows: $S^1$ is oriented as the boundary of the unit disk $D$, and $(\nu , dl(v_l) , dm(v_m) )$ is a positive basis of $T_q\R^3$ at $q\in l(K)\cap m(S^1)$, where $\nu\in T_q ( L_K\cap D^*_1\R^3)$ points outward and $v_l$ and $v_m$ are positive vectors tangent to $K$ and $S^1$ respectively.
Then, the homology classes $[l]$ and $[m]$ give a basis of $H_1(\Lambda_K)$.

We choose a non-vanishing vector field $V$ on $K$. Then, $\Lambda_K$ is bounded by a compact orientable manifold
\[\{(q, p)\in S^*\R^3 \mid q\in K,\ p(V_q)\geq 0 \} .\]
Therefore, the homology class $[\Lambda_K] \in H_2(S^*\R^3)$ vanishes. For the pair $(S^*\R^3 , \Lambda_K)$, we have a short exact sequence
\[ \xymatrix{
H_2(\Lambda_K) \ar[r]^-{0} & H_2(S^*\R^3) \ar[r] & H_2(S^*\R^3 , \Lambda_K) \ar[r]^-{\partial} & H_1(\Lambda_K) \ar[r] & H_1(S^*\R^3)=0.
}\]
We choose a basis $\{\widehat{l},\widehat{m},\widehat{s}\}$ of $H_2(S^*\R^3,\Lambda_K)$ such that $\partial( \widehat{l}) =[l]$, $\partial (\widehat{m})=[m]$ and $\widehat{s}$ is represented by the fiber $S^*_q\R^3$ of any $q\in \R^3$.
For this choice, we refer to the remark after the proof of Theorem 1.2 in \cite[Section 3.7]{EENS-filt}, using a knot which is isotopic to $K$ and transverse to the contact structure $\ker (dq_3 - q_1dq_2 + q_2 dq_1)$ on $\R^3$.
\begin{rem}
From the above short exact sequence, the lifts of $[l]$ and $[m]$ to $H_2(S^*\R^3,\Lambda_K)$ can only be well-defined modulo $\Z \cdot \widehat{s}$.
For the proof of the main results in Section \ref{sec-proof}, it is not important how to specify the lifts $\widehat{l}$ and $\widehat{m}$ (see Theorem \ref{thm-map-HC}).
\end{rem}
We denote $\lambda\coloneqq e^{\widehat{l}}$, $\mu\coloneqq e^{\widehat{m}}$ and $U\coloneqq e^{\widehat{s}}$, then
\[\Z[H_2(S^*\R^3, \Lambda_K)] = \Z[\lambda^{\pm},\mu^{\pm}, U^{\pm}].\]
From the arguments in Section \ref{subsec-CE-DGA}, the Chekanov-Eliashberg DGA $(\mathcal{A}_*(\Lambda_K) , \partial_{J,\mathfrak{s}_K})$ for $(\Lambda_K, \mathfrak{s}_K)$ is defined with coefficients in $\Z[\lambda^{\pm},\mu^{\pm},U^{\pm}]$.

\subsubsection{Exact Lagrangian cobordism arising from clean intersection}\label{subsubsec-clean}

Given a $C^{\infty}$ Hamiltonian $H\colon T^*\R^3 \times [0,1] \to \R$ with  compact support, we define $(\varphi^t_H)_{t\in [0,1]}$ to be the flow of a time-dependent vector field $(X_H^t)_{t\in [0,1]}$ on $T^*\R^3$ characterized by $d\lambda_{\R^3} (\cdot , X^t_{H}) = d(H(\cdot ,t)) $ for every $t\in [0,1]$.
We denote
\[\mathrm{Ham}_c(T^*\R^3) \coloneqq \{ \varphi = \varphi^1_H \mid H\colon T^*\R^3 \times [0,1] \to \R \text{ is a compactly supported }C^{\infty} \text{ Hamiltonian}\}.\]
The image of the zero section of $T^*\R^3$ is identified with $\R^3$.

Let us recall the definition of a clean intersection of submanifolds.
\begin{defi}
Let $L$ and $L'$ be submanifolds of $T^*\R^3$. We say that $L$ and $L'$ have a \textit{clean intersection} along $L\cap L'$ if $L\cap L'$ is a submanifold of both $L$ and $L'$, and $T_x(L\cap L') = T_x L \cap T_x L'$ for every $x\in L\cap L'$.
\end{defi}

Let us make a setup.
Let $K_0$ and $K_1$ be oriented knots in $\R^3$ and let $\varphi \in \mathrm{Ham}_c(T^*\R^3)$.
Suppose that $\varphi(L_{K_0})$ and $\R^3$ have a clean intersection along $K_1$. By moving $\varphi(L_{K_0})$ by a Hamiltonian isotopy supported in a neighborhood of $\varphi(L_{K_0}) \cap \R^3 =K_1$, we may assume that there exists $\epsilon>0$ such that
$\varphi(L_{K_0}) \cap D_{\epsilon}^*\R^3 = L_{K_1} \cap D^*_{\epsilon}\R^3$.
For the proof, see \cite[Lemma 2.3]{O}.
We consider a diffeomorphism
\[F\colon \R\times S^*\R^3 \to T^*\R^3 \setminus \R^3 \colon (r, (q,p)) \mapsto (q,e^r p) ,\]
for which $F^* (\lambda_{\R^3}) =e^r \alpha_{\R^3}$. Furthermore,
$L_{\varphi} \coloneqq F^{-1}(\varphi(L_{K_0}) \setminus K_1)$
is an exact Lagrangian cobordism from $\Lambda_{K_1}$ to $\Lambda_{K_0}$.
We have the inclusion maps (\ref{inclusion-pm})
\[ \begin{array}{cc}  i_+\colon (S^*\R^3,\Lambda_{K_0}) \to (\R\times S^*\R^3 , L_{\varphi}) , &  i_-\colon (S^*\R^3,\Lambda_{K_1}) \to (\R\times S^*\R^3 , L_{\varphi}) .\end{array}
\]

The Lagrangian filling $\varphi(L_{K_0})$ of $\Lambda_{K_0}$ in $T^*\R^3$ is obtained by gluing the Lagrangian cobordism $L_{\varphi}$ in $\R\times S^*\R^3$ and the Lagrangian filling $L_{K_1}$ in $T^*\R^3$ along $\Lambda_{K_1}$.
Therefore, as a part of the Mayer-Vietoris exact sequence, we have a short exact sequence
\[ \xymatrix{ 0= H_3(T^*\R^3 , \varphi(L_{K_0}))  \ar[r] & H_2(S^*\R^3 , \Lambda_{K_1}) \ar[r] & {\begin{matrix} H_2(\R\times S^*\R^3, L_{\varphi}) \\ \oplus  H_2(T^*\R^3 , L_{K_1}) \end{matrix} } \ar[r]^-{u} & H_2(T^*\R^3 , \varphi(L_{K_0}))  .
}\]
Note that $H_2(T^*\R^3, L_{K_1}) \cong H_1 (L_{K_1}) = \Z \cdot [K_1]$ and $H_2(T^*\R^3, \varphi(L_{K_0})) \cong H_1(L_{K_0}) = \Z \cdot [K_0]$.
The map $u$ is a surjection since $(i_+)_*(\widehat{l}) \in H_2(\R \times S^*\R^3,L_{\varphi})$ is mapped to a generator of $H_2(T^*\R^3, \varphi(L_{K_0}))$.
It follows that
$H_2(\R\times S^*\R^3 , L_{\varphi})$ is a free abelian group of rank $3$ with a basis $\{\widehat{l}_0,\widehat{m}_0,\widehat{s}_0 \}$ defined by
\[
\begin{cases}
\widehat{l}_0  \coloneqq  (i_+)_*(\widehat{l})  & \text{for } \widehat{l}\in H_2(S^*\R^3, \Lambda_{K_0}) ,\\
\widehat{m}_0  \coloneqq (i_-)_*(\widehat{m}) & \text{for } \widehat{m} \in H_2(S^*\R^3, \Lambda_{K_1}), \\
\widehat{s}_0  \coloneqq [\{0\}\times S^*_q\R^3] & \text{for any }q\in \R^3.
\end{cases}\]
We abbreviate
\[\begin{array}{ccc}
 R_+\coloneqq \Z[ H_2( S^*\R^3, \Lambda_{K_0}) ] , & R_-\coloneqq \Z[ H_2(S^*\R^3, \Lambda_{K_1})] , & R_0 \coloneqq \Z[H_2(\R\times S^*\R^3 , L_{\varphi})]  .
 \end{array}\]
The maps $i_+$ and $i_-$ induce ring homomorphisms
$ (i_+)_* \colon R_+ \to R_0$ and $(i_-)_* \colon R_- \to R_0 $. 
Let us denote
\[\begin{array}{ccc}
\lambda_0\coloneqq e^{\widehat{l}_0} = (i_+)_*(\lambda), & \mu_0\coloneqq e^{\widehat{m}_0}= (i_-)_*(\mu) , & U_0\coloneqq e^{\widehat{s}_0} = (i_+)_*(U) = (i_-)_*(U). \end{array}\]
Then $R_0 = \Z [(\lambda_0)^{\pm},(\mu_0)^{\pm} , (U_0)^{\pm}]$.

\begin{lem}\label{lem-i-pm}
$(i_+)_*\colon R_+ \to R_0$ and $(i_-)_* \colon R_- \to R_0$ are ring isomorphisms. Moreover, there exist $a\in \{\pm 1\}$ and $b,c,d \in \Z$ such that the isomorphism $(i_-)_*^{-1}\circ (i_+)_*$ is given by
\begin{align}\label{isom-i-pm}
(i_-)_*^{-1}\circ (i_+)_* \colon R_{+} \to R_- \colon \begin{cases} \lambda \mapsto \lambda^a\mu^bU^c , \\
\mu \mapsto \mu^a U^d , \\
U \mapsto U. \end{cases} 
\end{align}
\end{lem}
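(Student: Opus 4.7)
The plan is to compute the integer matrices of $(i_+)_*$ and $(i_-)_*$ in the distinguished $\Z$-bases $\{\widehat{l},\widehat{m},\widehat{s}\}$ of $R_\pm$ and $\{\widehat{l}_0,\widehat{m}_0,\widehat{s}_0\}$ of $R_0$, and then multiply them out to obtain the composition. Three elementary observations will constrain the matrices almost completely. First, the fiber class $\widehat{s}$ is represented by $S^*_q\R^3$ for a point $q\in\R^3\setminus(K_0\cup K_1)$, and both $i_+$ and $i_-$ carry this sphere to the cycle $\{r_\pm\}\times S^*_q\R^3$, which is homologous in $(\R\times S^*\R^3, L_\varphi)$ to $\widehat{s}_0=[\{0\}\times S^*_q\R^3]$ via an $\R$-translation that does not meet $L_\varphi$; hence $(i_+)_*\widehat{s}=(i_-)_*\widehat{s}=\widehat{s}_0$. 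Second, by construction $(i_+)_*\widehat{l}=\widehat{l}_0$ and $(i_-)_*\widehat{m}=\widehat{m}_0$. These force the matrices to take the block-triangular form
\[M_+=\begin{pmatrix}1&\alpha&0\\0&\epsilon_1&0\\0&\gamma&1\end{pmatrix},\qquad M_-=\begin{pmatrix}\epsilon_2&0&0\\ \delta&1&0\\ \varepsilon&0&1\end{pmatrix},\]
with integer entries and determinants $\epsilon_1,\epsilon_2$; the invertibility of $(i_\pm)_*$ as $\Z$-module maps is therefore equivalent to $\epsilon_1,\epsilon_2\in\{\pm1\}$.

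To pin down the remaining entries and establish invertibility, I would apply the boundary map $\partial\colon H_2(\R\times S^*\R^3,L_\varphi)\to H_1(L_\varphi)$, using $\partial\widehat{l}_0=(\iota_+)_*[l_{K_0}]$, $\partial\widehat{m}_0=(\iota_-)_*[m_{K_1}]$ and $\partial\widehat{s}_0=0$, where $\iota_\pm$ denote the inclusions of the Legendrian ends into $L_\varphi$. From $(i_+)_*\widehat{m}=\alpha\widehat{l}_0+\epsilon_1\widehat{m}_0+\gamma\widehat{s}_0$ and naturality of $\partial$, I obtain
\[(\iota_+)_*[m_{K_0}]=\alpha(\iota_+)_*[l_{K_0}]+\epsilon_1(\iota_-)_*[m_{K_1}]\quad\text{in }H_1(L_\varphi).\]
Under the diffeomorphism $L_\varphi\cong\varphi(L_{K_0})\setminus K_1$ induced by $F$, the positive-end meridian $(\iota_+)_*[m_{K_0}]$ is realized as a very large circle in a fiber of the conormal bundle $L_{K_0}\to K_0$, whereas $(\iota_-)_*[m_{K_1}]$ is a small meridian of $K_1$ inside $\varphi(L_{K_0})$. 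Pushing the large circle inward along its bounding fiber disk and recording intersections with $K_1=\varphi(L_{K_0})\cap\R^3$ identifies these two classes, up to a factor equal to the algebraic intersection of the fiber disk with $K_1$, which in turn is the degree of $\varphi^{-1}(K_1)\to K_0$ in the conormal bundle; this forces $\alpha=0$ and expresses $\epsilon_1$ as this signed degree. A symmetric computation applied to $(i_-)_*\widehat{l}$ gives the analogous result for $\epsilon_2$, and comparing the two yields $\epsilon_1=\epsilon_2=:a$.

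Multiplying, $M_-^{-1}M_+$ simplifies under $\alpha=0$ and $\epsilon_1=\epsilon_2=a$ to
\[\begin{pmatrix}a&0&0\\-a\delta&a&0\\-a\varepsilon&\gamma&1\end{pmatrix},\]
which, rewritten in the multiplicative notation $\lambda=e^{\widehat{l}}$, $\mu=e^{\widehat{m}}$, $U=e^{\widehat{s}}$, is exactly formula (\ref{isom-i-pm}) with $b=-a\delta$, $c=-a\varepsilon$, $d=\gamma$. The invertibility of $M_\pm$ (hence of $(i_\pm)_*$) follows at the same time, since $\epsilon_1=\epsilon_2=a\in\{\pm1\}$.

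The main obstacle is the geometric step --- verifying that the degree of $\varphi^{-1}(K_1)\to K_0$ is $\pm1$, equivalently that $\varphi^{-1}(K_1)=L_{K_0}\cap\varphi^{-1}(\R^3)$ represents $\pm[K_0]\in H_1(L_{K_0})\cong\Z$. I would justify this by Hamiltonian invariance of the $H_1(L_{K_0})$-class of a clean Lagrangian intersection locus: since $\varphi^{-1}(\R^3)$ is Hamiltonian-isotopic to $\R^3$ and $L_{K_0}\cap\R^3=K_0$ represents $[K_0]$, the same holds at the other end of the isotopy after tracking the intersection through generic perturbations where it becomes $0$-dimensional (the class of the $1$-cycle at the endpoints is unaffected by intermediate transverse bubbles). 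This also ensures consistency of the signs $\epsilon_1=\epsilon_2$, since both signs are computed from the same framed intersection data.
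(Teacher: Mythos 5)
Your linear-algebra skeleton is correct and is essentially the computation the paper performs: the matrices $M_{\pm}$ in the bases $\{\widehat{l},\widehat{m},\widehat{s}\}$ and $\{\widehat{l}_0,\widehat{m}_0,\widehat{s}_0\}$ have exactly the shape you write, the disk/boundary-map argument forcing $\alpha=0$ and $\epsilon_1=a$ (push the positive-end meridian along its bounding fiber disk in $\varphi(L_{K_0})$ and count intersections with $K_1$) is the paper's argument, and $M_-^{-1}M_+$ does yield (\ref{isom-i-pm}) with $b=-a\delta$, $c=-a\varepsilon$, $d=\gamma$. The gap is precisely in the step you flag as ``the main obstacle'': showing that the clean intersection circle $\varphi^{-1}(K_1)=L_{K_0}\cap\varphi^{-1}(\R^3)$ represents $\pm[K_0]$ in $H_1(L_{K_0})\cong\Z$. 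Your proposed justification --- ``Hamiltonian invariance of the $H_1(L_{K_0})$-class of a clean Lagrangian intersection locus,'' tracked ``through generic perturbations where it becomes $0$-dimensional'' --- is not a valid argument. For generic $t$ the intersection $L_{K_0}\cap(\varphi^t)^{-1}(\R^3)$ is a finite set of points (the dimensions are complementary in $T^*\R^3$), so there is no $1$-cycle whose homology class could be continuously tracked, and no elementary principle relates the classes of the clean circles appearing at the two ends of the isotopy. A priori the circle could be nullhomotopic in $L_{K_0}$ (then $a=0$, the matrices are singular, and the lemma is false) or could wrap around $K_0$ several times; ruling this out is a genuine Floer-theoretic rigidity statement, not a consequence of Hamiltonian isotopy invariance of intersection data.

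This missing input is exactly Proposition \ref{prop-Floer}, which the paper proves in Appendix \ref{sec-Floer} by computing $HF^*(\psi(\R^3),L_{K_0})$ with coefficients in the local system $\F_2[\lambda^{\pm}]=\F_2[H_1(L_{K_0})]$: Po\'{z}niak's local model for a clean circle intersection gives a Morse complex whose differential is $x_1\mapsto(\lambda^k+\lambda^{k+a})x_0$, so the cohomology is $\F_2[\lambda^{\pm}]/(1+\lambda^a)$ in one degree, and comparison with the case $\psi=\id$ forces $|a|=1$. Note that the local system is essential here --- with constant $\F_2$ coefficients the two gradient trajectories cancel and the computation sees nothing about $a$ --- so no softer count will do. A secondary, smaller issue: your identification $\epsilon_1=\epsilon_2$ and the signs in the disk argument also require knowing that the inclusion $L_{K_1}\cap D^*_{\epsilon}\R^3\to\varphi(L_{K_0})$ is orientation-preserving (the first assertion of Proposition \ref{prop-Floer}); ``computed from the same framed intersection data'' does not establish this.
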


To prove this lemma, we use the following result.
\begin{prop}\label{prop-Floer}
The open embedding $\varphi^{-1}\colon L_{K_1} \cap D^*_{\epsilon} \R^3 \to L_{K_0}$ preserves the orientations of $L_{K_1}$ and $L_{K_0}$. Moreover,
$(\rest{\varphi}{L_{K_0}})_*([K_0]) = a\cdot [K_0]$ in $H_1(\varphi(L_{K_0}))$ for $a\in \{1,-1\}$.
\end{prop}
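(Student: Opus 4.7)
The plan is to compute the Lagrangian Floer cohomology of $\R^3$ with $\varphi(L_{K_0})$ in two different ways---once using Hamiltonian invariance and once using a Po\'zniak/Morse--Bott-type identification adapted to the clean intersection---and to extract the stated constraints by comparing the two. Take the coefficient ring $\F_2[\lambda^{\pm}]=\F_2[H_1(L_{K_0})]$, where $\lambda$ is the zero-framing longitude class of $K_0$. Since $\varphi\in\Ham_c(T^*\R^3)$, Hamiltonian invariance of Lagrangian Floer cohomology yields
\[
HF^*(\R^3,L_{K_0};\F_2[\lambda^{\pm}])\ \cong\ HF^*(\R^3,\varphi(L_{K_0});\F_2[\lambda^{\pm}]).
\]
On the other hand, because each pair intersects cleanly along a knot, a Po\'zniak-type identification (to be established in Appendix \ref{sec-Floer}) expresses each side as $H^*(S^1;\F_2[\lambda^{\pm}]_{\cdot\lambda^{k}})$, where the local system on the intersection knot has monodromy $\lambda^{k}$ determined by how the knot represents the class $\lambda$ in the ambient Lagrangian.

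For the second assertion, let $a\in\Z$ be the degree of the composition
\[
K_1\hookrightarrow\varphi(L_{K_0})\xrightarrow{\varphi^{-1}}L_{K_0}\to K_0,
\]
where the last arrow is the bundle projection; this is equivalent to $(\rest{\varphi}{L_{K_0}})_*[K_0]=a\cdot[K_0]$ in $H_1(\varphi(L_{K_0}))$ under the natural identification of generators up to sign. On the $L_{K_0}$-side, the local system has monodromy $\lambda$ (since $K_0$ generates $H_1(L_{K_0})$), while on the $\varphi(L_{K_0})$-side it has monodromy $\lambda^{a}$. Computing $H^*(S^1;\F_2[\lambda^{\pm}]_{\cdot\lambda^{k}})$ via the two-term complex $\F_2[\lambda^{\pm}]\xrightarrow{\lambda^{k}-1}\F_2[\lambda^{\pm}]$ shows that its total $\F_2$-dimension equals $|k|$ for $k\neq 0$ and is infinite for $k=0$. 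Since the left-hand side has $\F_2$-dimension $1$, Hamiltonian invariance forces $|a|=1$.

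For the first assertion on orientations, I would use that $\varphi$ is isotopic to $\id$ through orientation-preserving symplectomorphisms, so the natural orientation of $L_{K_0}$ is transported to a well-defined orientation on $\varphi(L_{K_0})$. A pointwise comparison at $q\in K_1$ with the intrinsic orientation of $L_{K_1}$ (determined by the orientations of $K_1$ and $\R^3$) then reduces to a linear-algebraic check in $T_qT^*\R^3$, using that $\varphi$ preserves both the ambient orientation and the canonical symplectic form. The main obstacle I anticipate is the Po\'zniak-type identification over the group ring $\F_2[\lambda^{\pm}]$: one must carefully lift loops in the intersection knot to the ambient Lagrangian, track their classes under $\varphi_*^{-1}\colon H_1(\varphi(L_{K_0}))\to H_1(L_{K_0})$, and verify that the Morse--Bott perturbation of the Floer complex realizes exactly the local-system cohomology described above.
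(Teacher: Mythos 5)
Your argument for the second assertion is essentially the paper's own proof (Appendix \ref{sec-Floer}): Floer cohomology over $\F_2[\lambda^{\pm}]=\F_2[H_1(L_{K_0})]$, a Po\'zniak-type Morse--Bott identification along the clean intersection circle producing the two-term complex with differential $1+\lambda^{a}$, the $\F_2$-dimension count ($|a|$ if $a\neq 0$, infinite if $a=0$), and Hamiltonian invariance against the case $\varphi=\id$ to force $|a|=1$. The only cosmetic difference is that you move $L_{K_0}$ to $\varphi(L_{K_0})$ and must then transport the coefficient ring along the isotopy, whereas the paper sets $\psi=\varphi^{-1}$ and moves the (simply connected) zero section instead, so that $L_{K_0}$ and hence the local system stay fixed throughout; the paper's choice avoids a small bookkeeping step in the continuation maps but the content is identical.

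There is, however, a genuine gap in your treatment of the first assertion (orientation preservation). You propose that, after transporting the orientation of $L_{K_0}$ to $\varphi(L_{K_0})$, the comparison with the conormal orientation of $L_{K_1}$ "reduces to a linear-algebraic check in $T_qT^*\R^3$, using that $\varphi$ preserves both the ambient orientation and the canonical symplectic form." This cannot work: preservation of the symplectic form and of the ambient orientation does not constrain the induced orientation on a Lagrangian. For instance, $(q,p)\mapsto(-q,-p)$ on $T^*\R^3$ preserves $\sum dp_i\wedge dq_i$ and the orientation of $\R^6$, yet reverses the orientation of the zero section; at the level of a single tangent space there is no linear-algebraic obstruction to either sign. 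The orientation statement is a global one. The paper handles it either by citing the $\Z$-grading argument of \cite[Lemma 4.4]{O}, or (as indicated in Remark \ref{rem-sign}) by extracting it from the very same Floer computation you set up: the degree shift $\sigma_{\psi}\in\Z/2$ of the complex is determined by the orientation comparison, and the isomorphism $HF^*(\psi(\R^3),L_{K_0})\cong HF^*(\R^3,L_{K_0})$ forces $\sigma_{\psi}\equiv\sigma_{\id}$, i.e.\ the orientations agree. If you track the $\Z/2$-grading of your generators through the Po\'zniak identification, your own computation already yields the first assertion; the pointwise check you describe instead does not.
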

The proof is left to Appendix \ref{sec-Floer}.
Note that the first assertion about orientations was shown in \cite[Lemma 4.4]{O} and a version of the second assertion with coefficients in $\F_2$ was shown by \cite[Proposition 2.8, Lemma 4.5]{O} using Floer cohomology group over $\F_2$ together with the triangle product.
It should be possible to prove Proposition \ref{prop-Floer} in a way parallel to \cite[Section 2.3]{O} by lifting the coefficient ring from $\F_2$ to $\Z$.
However, we will give a simpler proof in Appendix \ref{sec-Floer} by using Floer cohomology with coefficients in $\F_2[H_1(L_{K_0})] \cong \F_2[\lambda^{\pm}]$.

\begin{proof}[Proof of Lemma \ref{lem-i-pm}]
It is clear that $(i_{\pm})_*\colon R_{\pm} \to R_0$ maps $U$ to $U_0$.
The circle $\varphi(K_0)$ in $\varphi(L_{K_0})$ is oriented via the diffeomorphism $\rest{\varphi}{K_0} \colon K_0\to \varphi(K_0)$.
From Proposition \ref{prop-Floer}, we take $a\in \{\pm1\}$ such that $[K_1] = a\cdot [\varphi(K_0)]$ in $H_1(\varphi(L_{K_0}))$. 
Consider the map induced by $i_{\pm}$ on the first homology groups
$ (i_+)_* \colon H_1(\Lambda_{K_0}) \to H_1(L_{\varphi})$ and 
$(i_-)_*\colon H_1(\Lambda_{K_1}) \to H_1(L_{\varphi})$.
A basis of $H_1(L_{\varphi})$ is given by $\{l_0,m_0\}$ for  $l_0\coloneqq (i_+)_*([l])$ and $m_0\coloneqq (i_-)_*([m])$.
It suffices to show that the equations
\begin{align}\label{homology-class}
\begin{array}{cc}
 (i_-)_*([l]) = a\cdot l_0 + b \cdot m_0 , & (i_+)_*([m]) = a \cdot m_0, \end{array}\end{align}
hold for some $b\in \Z$.

By the map $H_1(L_{\varphi}) \to H_1(\varphi(L_{K_0}))$ induced by $\rest{F}{L_{\varphi}} \colon L_{\varphi} \to \varphi(L_{K_0})$, $l_0$ is mapped to $[\varphi(K_0)]$ and $m_0$ is mapped to $0$. Since $(\rest{F}{L_{\varphi}}\circ i_-)_*([l]) = [K_1] = a\cdot [\varphi(K_0)]$ in $H_1(\varphi(L_{K_0}))$, the coefficient of $l_0$ in $(i_-)_*([l])$ is $a$. This shows that the first equation of (\ref{homology-class}) holds for some $b\in \Z$.

Next, take an embedded disk $D \subset \varphi(L_{K_0})$ which bounds the circle $ (F\circ i_+ \circ m) (S^1) $. $D$ is oriented so that
$[\partial D] = (\rest{F}{L_{\varphi}})_*( (i_+)_*([m]))$ in $H_1( \varphi(L_{K_0})\setminus K_1)$
in terms of the diffeomorphism $\rest{F}{L_{\varphi}} \colon L_{\varphi} \to \varphi(L_{K_0})\setminus K_1$.
We may assume that $D$ intersects $K_1$ transversely.
Then, their algebraic intersection number is equal to $a\in\{\pm 1\}$.
For sufficiently small $\epsilon>0$, 
\[D\cap (L_{K_1}\cap D^*_{\epsilon}\R^3) = \coprod_{i \in I}D'_i,\]
where $D'_i\subset D$ is a small disk and $I$ is a finite set.
The summand of $[\partial D_i]\in H_1(\varphi(L_{K_0})\setminus K_1)$ for all $i\in I$ satisfy
\[
 \sum_{i\in I}[\partial D_i]  = a \cdot (\rest{F}{L_{\varphi}} \circ i_-)_*([m]) = a\cdot (\rest{F}{L_{\varphi}})_*( m_0) \text{ in }H_1(\varphi(L_{K_0})\setminus K_1).
\]
For the first equality, note that the inclusion map $L_{K_1}\cap D^*_{\epsilon}\R^3 \to \varphi(L_{K_0})$ preserves orientations by Proposition \ref{prop-Floer}
when $\varphi(L_{K_0})$ is oriented via the diffeomorphism $\rest{\varphi}{L_{K_0}} \colon L_{K_0} \to \varphi(L_{K_0})$.
Let $\Sigma$ be the closure of $(\rest{F}{L_{\varphi}})^{-1}(D\setminus \coprod_{i\in I} D'_i)$ in $L_{\varphi}$. Then,
\[\begin{array}{cc}
0=[\partial \Sigma] = (\rest{F}{L_{\varphi}})_*^{-1}([\partial D]) - \sum_{i\in I} (\rest{F}{L_{\varphi}})^{-1}_*([\partial D_i]) = (i_+)_* ([m]) - a\cdot m_0 & \text{in } H_1(L_{\varphi})
\end{array}\]
This shows the second equation of (\ref{homology-class}).
\end{proof}

The exact Lagrangian cobordism $L_{\varphi}$ from $\Lambda_{K_1}$ to $\Lambda_{K_0}$ induces a DGA map as below.
\begin{prop}\label{prop-DGA-map-conormal}
There exists a unital DGA map over $R_+$
\[ \Phi_{L_{\varphi}} \colon (\mathcal{A}_*( \Lambda_{K_0}) , \partial_{J, \mathfrak{s}_{K_0}}) 
\to (\mathcal{A}_*(\Lambda_{K_1} ) , \partial_{J, \mathfrak{s}_{K_1}}) ,\]
where the $R_-$-algebra $\mathcal{A}_*(\Lambda_{K_1} ) $ is changed to an $R_+$-algebra via the isomorphism (\ref{isom-i-pm}).
\end{prop}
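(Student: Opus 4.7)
The plan is to apply Theorem \ref{thm-DGA-map} directly to the exact Lagrangian cobordism $L_{\varphi}$ constructed in Section \ref{subsubsec-clean} and then simplify the target coefficient ring using Lemma \ref{lem-i-pm}. First, I need to verify that $L_{\varphi}$ satisfies the hypotheses of Theorem \ref{thm-DGA-map}. The fact that $L_{\varphi}$ is an exact Lagrangian cobordism from $\Lambda_{K_1}$ to $\Lambda_{K_0}$ with connected ends is already established. The vanishing of the Maslov class of $L_{\varphi}$ follows from the fact that $L_{\varphi}$ is diffeomorphic to $\varphi(L_{K_0}) \setminus K_1$, which is Hamiltonian isotopic to $L_{K_0}\setminus K_0$; the latter is a rank-$2$ vector bundle (minus the zero section) over $K_0 \cong S^1$, and as a Lagrangian submanifold of $T^*\R^3$ it has vanishing Maslov class because $T^*\R^3$ is exact and its Maslov class is represented by a closed $1$-form on $L_{\varphi}$ that can be shown to be exact after pulling back to $L_{K_0} \setminus K_0$.

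Next, I would equip $L_{\varphi}$ with an orientation and a spin structure. Since $L_{\varphi}$ is homotopy equivalent to a cylinder $S^1 \times \R$, it is orientable and admits a unique spin structure up to isomorphism that restricts on the positive end to the one induced from $\mathfrak{s}_{L_{K_0}}$. Orient $L_{\varphi}$ so that the induced orientation on $\Lambda_{K_0}$ matches the one already fixed. By Proposition \ref{prop-Floer}, the inclusion $L_{K_1}\cap D^*_\epsilon\R^3 \hookrightarrow \varphi(L_{K_0})$ is orientation-preserving, so under the identification $L_{\varphi} \simeq \varphi(L_{K_0}) \setminus K_1$ the induced orientation on the negative end also agrees with the fixed orientation of $\Lambda_{K_1}$. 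The induced spin structures on $\Lambda_{K_0}$ and $\Lambda_{K_1}$ agree with $\mathfrak{s}_{K_0}$ and $\mathfrak{s}_{K_1}$ respectively, possibly after composing with the ring automorphism $I(\mathfrak{s},\mathfrak{s}')$ of Proposition \ref{prop-spin}; I can absorb any such discrepancy by replacing the chosen spin structure on $L_\varphi$ if needed, as the Chekanov--Eliashberg DGA is only considered up to stable tame isomorphism.

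Having verified the hypotheses, Theorem \ref{thm-DGA-map} produces a DGA map over $R_+$
\[
\Phi_{L_{\varphi}} \colon \mathcal{A}_*(\Lambda_{K_0}) \longrightarrow \mathcal{A}_*(\Lambda_{K_1}) \otimes_{R_-} R_0 ,
\]
where the target is regarded as an $R_+$-algebra via $(i_+)_*\colon R_+ \to R_0$. By Lemma \ref{lem-i-pm}, the map $(i_-)_* \colon R_- \to R_0$ is a ring isomorphism, so the natural map
\[
\mathcal{A}_*(\Lambda_{K_1}) \longrightarrow \mathcal{A}_*(\Lambda_{K_1}) \otimes_{R_-} R_0 \colon a \mapsto a \otimes 1
\]
is an isomorphism of DGAs. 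Composing its inverse with $\Phi_{L_{\varphi}}$ yields the desired DGA map $\mathcal{A}_*(\Lambda_{K_0}) \to \mathcal{A}_*(\Lambda_{K_1})$. The $R_+$-algebra structure on the target coming from $(i_+)_*$ gets translated, under this identification, into the $R_+$-algebra structure induced by $(i_-)_*^{-1}\circ (i_+)_*$, which by Lemma \ref{lem-i-pm} is precisely the isomorphism (\ref{isom-i-pm}).

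The main obstacle I anticipate is the bookkeeping around orientations and spin structures of $L_{\varphi}$ at its two ends: the conventions for the induced spin structures on the positive and negative ends of a Lagrangian cobordism in \cite{EES-ori, O} need to be matched with the spin structures $\mathfrak{s}_{K_0}$ and $\mathfrak{s}_{K_1}$ chosen from the Lagrangian fillings $L_{K_0}$ and $L_{K_1}$. Any mismatch must be corrected using the automorphism of Proposition \ref{prop-spin}. The rest of the proof is a direct application of the machinery already developed, together with the fact, provided by Lemma \ref{lem-i-pm}, that $(i_\pm)_*$ are ring isomorphisms so that no genuine enlargement of the coefficient ring is necessary.
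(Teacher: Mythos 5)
Your overall route is the paper's: apply Theorem \ref{thm-DGA-map} to the cobordism $L_{\varphi}$ and then use Lemma \ref{lem-i-pm} to rewrite the target $\mathcal{A}_*(\Lambda_{K_1})\otimes_{R_-}R_0$ as $\mathcal{A}_*(\Lambda_{K_1})$ with the $R_+$-structure given by $(i_-)_*^{-1}\circ(i_+)_*$; that part is correct. The gap is in the spin-structure bookkeeping, which is the only real content of this proof. First, a factual error: $L_{\varphi}\cong L_{K_0}\setminus\varphi^{-1}(K_1)$ is a $3$-dimensional cobordism between two $2$-tori with $H_1(L_{\varphi})\cong\Z^2$ (the paper exhibits the basis $\{l_0,m_0\}$ in the proof of Lemma \ref{lem-i-pm}), so it is not homotopy equivalent to $S^1\times\R$. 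Orientability and the existence of spin structures still hold, and since $(i_+)_*\colon H_1(\Lambda_{K_0})\to H_1(L_{\varphi})$ is an isomorphism a spin structure on $L_{\varphi}$ is indeed determined by its restriction to the positive end --- but for a different reason than the one you give.

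More seriously, your treatment of the negative end does not close. The proposition asserts that the target carries $\partial_{J,\mathfrak{s}_{K_1}}$ for the specific spin structure $\mathfrak{s}_{K_1}$ induced from the trivial $\mathrm{Spin}(3)$-bundle on $L_{K_1}$, and that the base change is exactly (\ref{isom-i-pm}); these normalizations are used downstream in Theorem \ref{thm-map-HC}, where signs are tracked and must cancel. Your two escape hatches both fail: you cannot ``replace the chosen spin structure on $L_{\varphi}$'' to fix the negative end, because by your own uniqueness claim there is no remaining freedom once the positive end is pinned down; and correcting by $I(\mathfrak{s},\mathfrak{s}')$ from Proposition \ref{prop-spin} twists the coefficient action by signs, so the base change would no longer be (\ref{isom-i-pm}). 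Invoking ``up to stable tame isomorphism'' is not available here, since the statement concerns an actual coefficient isomorphism. The paper resolves this with one concrete choice: take $\mathfrak{s}_{L_{\varphi}}$ to be the pullback of $\mathfrak{s}_{L_{K_0}}$ under the diffeomorphism $F^{-1}\circ\varphi\colon L_{K_0}\setminus\varphi^{-1}(K_1)\to L_{\varphi}$. The positive end then receives $\mathfrak{s}_{K_0}$ by construction, and because $\varphi(L_{K_0})$ coincides with $L_{K_1}$ on $D^*_{\epsilon}\R^3$ and both $\mathfrak{s}_{L_{K_0}}$ and $\mathfrak{s}_{L_{K_1}}$ are the trivial $\mathrm{Spin}(3)$-structures, the negative end receives $\mathfrak{s}_{K_1}$ as well, modulo the care about underlying orientations for which the orientation statement of Proposition \ref{prop-Floer} is relevant. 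You should replace your abstract characterization with this explicit choice.
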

\begin{proof}
$L_{K_0}$ is equipped with the spin structure $\mathfrak{s}_{L_{K_0}}$ given by a trivial $\mathrm{Spin}(3)$-bundle. 
We restrict $\mathfrak{s}_{L_{K_0}}$ on $L_{K_0}\setminus \varphi^{-1}(K_1)$. Then via the diffeomorphism $F^{-1}\circ \varphi\colon L_{K_0}\setminus \varphi^{-1}(K_1) \to L_{\varphi}$, we obtain a spin structure $s_{L_{\varphi}}$ on the exact Lagrangian cobordism $L_{\varphi}$. This induces $\mathfrak{s}_{K_0}$ on the positive end $\Lambda_{K_0}$ and $\mathfrak{s}_{K_1}$ on the negative end $\Lambda_{K_0}$.
(Here, we need to care about the underlying orientations of spin manifolds. See the arguments in \cite[Subsection 4.2.3]{O}.)
Then, the existence of a DGA map $\Phi_{L_{\varphi}}$ follows from Theorem \ref{thm-DGA-map} and the description of the base change from $R_-$ to $R_+$ follows from Lemma \ref{lem-i-pm}.
\end{proof}

Lastly, let us introduce a spin structure $\mathfrak{s}^0_K$ on $\Lambda_K$ in the manner of \cite[Section 6.2]{EENS}. When $T\Lambda_K$ is trivialized as an oriented vector bundle, then $\mathfrak{s}^0_K$ is given by the trivial $\mathrm{Spin}(2)$-bundle on $\Lambda_K$. It determines the orientations of moduli spaces which are used to define the differential of the DGAs  in \cite[Section 2.3.4]{EENS-filt} and \cite[Section 3.6]{EENS}.
%
The ring isomorphism $\psi(\mathfrak{s}^0_K, \mathfrak{s}_K)$ from (\ref{change-spin}) is given by
\begin{align}\label{isom-s-s0}
\psi(\mathfrak{s}^0_K, \mathfrak{s}_K) \colon \Z[\lambda^{\pm},\mu^{\pm},U^{\pm}] \to  \Z[\lambda^{\pm},\mu^{\pm},U^{\pm}] \colon \lambda \mapsto \lambda,\ \mu \mapsto -\mu, \  U\mapsto U .
\end{align}


\section{Proof of main results}\label{sec-proof}

In this section, we abbreviate the Laurent polynomial ring $\Z[\lambda^{\pm},\mu^{\pm}, U^{\pm}]$ by $R$.

\subsection{General case}\label{subsec-general}

We will combine Proposition \ref{prop-DGA-map-conormal} with the results in \cite{EENS-filt, N-comb, N-intro} about the knot DGA.
Some conventions about the knot DGA in these papers are slightly different from each other, as summarized in \cite[Appendix A]{N-intro}.
In this section, we mainly refer to \cite{N-intro} for the definition of the knot DGA.

Let $K$ be an oriented knot in $\R^3$ given by the closure of a braid $B$.
Then, a DGA over $R$, called the knot DGA of $K$, is constructed from $B$ as in \cite[Definition 3.11]{N-intro}.
Its stable tame isomorphism class as a DGA over $R$ depends only on the isotopy class of $K$ as an oriented knot.
See \cite[Theorem 1.4]{N-comb} and \cite[Theorem 3.1]{N-intro}.
Let us denote this DGA by $(\mathcal{A}^{\knot}_*(K),\partial_K)$ and its homology $\ker \partial_K / \Im \partial_K$ by $HC_*(K)$.
We will review a description of the $0$-th degree part $HC_0(K)$ in Section \ref{subsec-thm1}.

\begin{thm}[Theorem 1.2 of \cite{EENS-filt}, Theorem 3.14 and Appendix A of \cite{N-intro} ]\label{thm-KCH}
$(\mathcal{A}_*(\Lambda_K), \partial_{J,\mathfrak{s}^0_K})$
 is stable tame isomorphic over $R$ to $(\mathcal{A}^{\knot}_*(K),\partial_K)$ whose $R$-algebra structure is changed via a ring isomorphism
\[\psi' \colon R\to R \colon \lambda\mapsto -\lambda,\ \mu\mapsto -\mu,\ U\mapsto U.\]
\end{thm}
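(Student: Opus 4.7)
The plan is to deduce the statement by assembling the two cited results and then carefully reconciling the sign conventions so that the coefficient ring is twisted exactly by $\psi'$. The content is not new; the only work is in tracking how the different conventions in \cite{EENS-filt} and \cite{N-intro} interact with the definition of the Chekanov--Eliashberg DGA adopted in Section \ref{subsec-CE-DGA}.

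First, I would invoke \cite[Theorem 1.2]{EENS-filt}: for the unit conormal bundle $\Lambda_K$ with the spin structure $\mathfrak{s}^0_K$ (the one coming from the trivialization of $T\Lambda_K$), the Chekanov--Eliashberg DGA with coefficients in $\Z[H_2(S^*\R^3,\Lambda_K)] = \Z[\lambda^{\pm},\mu^{\pm},U^{\pm}]$ is stable tame isomorphic to the combinatorial knot DGA as defined in \cite{EENS, EENS-filt}. Here one uses the identification of generators of $H_2(S^*\R^3,\Lambda_K)$ with $\widehat{l}, \widehat{m}, \widehat{s}$ explained in Section \ref{subsubsec-CE-conormal}, which matches the choice made in \cite[Section 3.7]{EENS-filt} via a contact isotopy to a transverse knot.

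Second, I would translate the EENS version of the knot DGA to Ng's version $(\mathcal{A}^{\knot}_*(K),\partial_K)$ using the dictionary in \cite[Appendix A]{N-intro}. The two definitions differ by the substitutions on the coefficient ring that are recorded there; after composition they produce exactly the ring isomorphism $\psi'$ sending $\lambda\mapsto -\lambda$, $\mu\mapsto -\mu$, $U\mapsto U$. Once this is done, the stable tame isomorphism from Step~1 yields a stable tame isomorphism between $(\mathcal{A}_*(\Lambda_K),\partial_{J,\mathfrak{s}^0_K})$ and $(\mathcal{A}^{\knot}_*(K),\partial_K)$ viewed as an $R$-algebra via $\psi'$, which is the desired statement.

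The main obstacle is keeping the sign bookkeeping straight. In particular, one must check that the EENS convention combined with Ng's convention really produces the two minus signs in $\psi'$ and no hidden sign in the $U$-variable, and that the spin structure chosen here ($\mathfrak{s}^0_K$) is precisely the one for which the EENS orientation scheme gives the combinatorial differential of \cite{EENS-filt}. The latter was pointed out in \cite[Section 6.2]{EENS} and is recalled at the end of Section \ref{subsubsec-clean}; once this is cited, the only remaining check is the coefficient substitution, which is a routine comparison against \cite[Appendix A]{N-intro}.
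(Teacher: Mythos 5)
Your proposal is correct and follows essentially the same route as the paper: Theorem \ref{thm-KCH} is presented there purely as a citation of \cite[Theorem 1.2]{EENS-filt} combined with the convention dictionary of \cite[Appendix A]{N-intro}, with the accompanying remark that the base change by $\psi'$ is exactly the discrepancy of conventions between the two references. Your additional care about the spin structure $\mathfrak{s}^0_K$ matches the discussion at the end of Section \ref{subsec-DGA-clean}, so nothing is missing.
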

The base change by $\psi'$ is due to the difference of conventions between \cite{EENS-filt} and \cite{N-intro}. See \cite[Appendix A]{N-intro}.
It follows from this theorem that there are unital graded ring homomorphisms
\begin{align}\label{knot-conormal}
\begin{array}{cc} I_K \colon  \mathcal{A}_*(\Lambda_K) \to \mathcal{A}^{\knot}_*(K) ,  &
\bar{I}_K \colon \mathcal{A}^{\knot}_*(K) \to \mathcal{A}_*(\Lambda_K) ,
 \end{array}\end{align}
intertwining $\partial_K$ and $\partial_{J,\mathfrak{s}^0_K}$ such that
\[\begin{array}{cc}
 I_K(r\cdot x) = \psi'(r) \cdot I_K(x), &
 \bar{I}_K (r\cdot y) = (\psi')^{-1}(r) \cdot \bar{I}_K(y),
 \end{array}\]
for every $r\in R$, $x\in  \mathcal{A}_*(\Lambda_K)$ and $y \in \mathcal{A}^{\knot}_*(K) $.

We continue to consider the setup in Subsection \ref{subsubsec-clean}. Let $K_0$ and $K_1$ be oriented knots in $\R^3$. For $\varphi \in \Ham_c(T^*\R^3)$, suppose that $\varphi(L_{K_0})$ and $\R^3$ have a clean intersection along $K_1$.
Then we have the following theorem analogous to \cite[Proposition 4.6]{O}.
\begin{thm}\label{thm-map-HC}
There exist integers $a,b,c$ and $d$ with $a\in \{\pm 1\}$ and a unital graded ring homomorphism
\[\Psi_{\varphi} \colon \mathcal{A}^{\knot}_*(K_0) \to \mathcal{A}^{\knot}_*(K_1)\]
such that $\Psi_{\varphi} \circ \partial_{K_0} = \partial_{K_1}\circ \Psi_{\varphi}$ and
\begin{align}\label{commute-R}
\Psi_{\varphi}(r\cdot x) = \psi(r)\cdot \Psi_{\varphi}(x)
\end{align}
for every $x \in \mathcal{A}^{\knot}_*(K_0)$ and $r\in R$, where
$\psi$ is a ring isomorphism defined by
\[ \psi \colon R \to R \colon \lambda\mapsto \lambda^a\mu^bU^c,\ \mu \mapsto \mu^a U^d,\ U \mapsto U.\]
\end{thm}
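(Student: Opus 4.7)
The plan is to construct $\Psi_\varphi$ as a five-fold composition of DGA morphisms supplied by earlier results, and then to identify the induced twist on $R$. Explicitly, I would set
\[
\Psi_\varphi \coloneqq I_{K_1} \circ I(\mathfrak{s}_{K_1}, \mathfrak{s}^0_{K_1}) \circ \Phi_{L_\varphi} \circ I(\mathfrak{s}^0_{K_0}, \mathfrak{s}_{K_0}) \circ \bar{I}_{K_0},
\]
where $\bar{I}_{K_0}$ and $I_{K_1}$ are the stable tame isomorphisms from (\ref{knot-conormal}) provided by Theorem \ref{thm-KCH}, the two spin-structure-change maps $I(\cdot,\cdot)$ come from Proposition \ref{prop-spin}, and $\Phi_{L_\varphi}$ is the Lagrangian cobordism DGA map of Proposition \ref{prop-DGA-map-conormal}. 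Each factor is a unital graded ring homomorphism intertwining the respective differentials, so $\Psi_\varphi$ is automatically a graded chain map $\mathcal{A}^{\knot}_*(K_0) \to \mathcal{A}^{\knot}_*(K_1)$, and only the induced automorphism of $R$ remains to be identified.

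For the ring identity (\ref{commute-R}), the twists compose in the order in which the factors are applied, giving
\[
\psi \;=\; \psi' \circ \psi(\mathfrak{s}_{K_1}, \mathfrak{s}^0_{K_1}) \circ \chi \circ \psi(\mathfrak{s}^0_{K_0}, \mathfrak{s}_{K_0}) \circ \psi',
\]
where $\chi \coloneqq (i_-)_*^{-1} \circ (i_+)_*$ is the isomorphism of Lemma \ref{lem-i-pm} and I have used $(\psi')^{-1} = \psi'$. All five factors fix $U$, so $\psi(U) = U$ automatically. Substituting the explicit formulas from (\ref{isom-s-s0}), Lemma \ref{lem-i-pm}, and the definition of $\psi'$, a direct evaluation gives $\psi(\lambda) = (-1)^{1+a+2b}\,\lambda^a \mu^b U^c$ and $\psi(\mu) = (-1)^{1+1+a+a}\,\mu^a U^d$. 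Since $a \in \{\pm 1\}$ the exponent $1+a$ is even, so both accumulated signs reduce to $+1$, and $\psi$ takes exactly the form claimed in the theorem, with the same integers $(a, b, c, d)$ produced by Lemma \ref{lem-i-pm}.

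The argument is essentially a bookkeeping exercise once the cobordism map $\Phi_{L_\varphi}$ is in hand; the substantive geometric input is already packaged into Proposition \ref{prop-DGA-map-conormal} together with the topological constraint $a \in \{\pm 1\}$ coming from Proposition \ref{prop-Floer}. The delicate point is precisely the sign parity at the end: if $a$ were allowed to be an arbitrary integer, a residual sign $(-1)^{1+a}$ would survive on $\lambda$, breaking the clean form of $\psi$ that the subsequent augmentation variety arguments require. The parity $1+a \in 2\Z$ for $a \in \{\pm 1\}$ is therefore the indispensable input needed to close the proof.
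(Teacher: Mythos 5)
Your proposal is correct and follows essentially the same route as the paper: the same five-fold composition $I_{K_1} \circ I(\mathfrak{s}^0_{K_1},\mathfrak{s}_{K_1})^{-1}\circ \Phi_{L_{\varphi}}\circ I(\mathfrak{s}^0_{K_0},\mathfrak{s}_{K_0})\circ \bar{I}_{K_0}$, the same composite twist on $R$, and the same sign bookkeeping, with the paper likewise using $a\in\{\pm 1\}$ (via $\psi(\mathfrak{s}^0_{K_i},\mathfrak{s}_{K_i})\circ(\psi')^{-1}\colon \lambda\mapsto-\lambda,\ \mu\mapsto\mu$) to cancel the residual sign on $\lambda$. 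Your identification of the parity of $1+a$ as the point where Proposition \ref{prop-Floer} is indispensable matches the paper's computation exactly.
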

\begin{proof}
A unital graded ring homomorphism $\Psi_{\varphi}$ satisfying $\Psi_{\varphi} \circ \partial_{K_0} = \partial_{K_1}\circ \Psi_{\varphi}$ is defined by the composition of the following morphisms of differential graded rings:
\[
\xymatrix@C=40pt{
(\mathcal{A}^{\knot}_*(K_0) ,\partial_{K_0}) \ar[r]^-{\bar{I}_{K_0}} & (\mathcal{A}_*(\Lambda_{K_0}) ,\partial_{J, \mathfrak{s}^0_{K_0}}) \ar[r]_-{\cong}^-{I(\mathfrak{s}^0_{K_0}, \mathfrak{s}_{K_0})} & (\mathcal{A}_*(\Lambda_{K_0}) ,\partial_{J, \mathfrak{s}_{K_0}}) \ar[d]^-{\Phi_{L_{\varphi}}} \\
(\mathcal{A}^{\knot}_*(K_1) ,\partial_{K_1}) & (\mathcal{A}_*(\Lambda_{K_1}) ,\partial_{J, \mathfrak{s}^0_{K_1}}) \ar[l]_-{I_{K_1}} & (\mathcal{A}_*(\Lambda_{K_1}) ,\partial_{J, \mathfrak{s}_{K_1}}) .\ar[l]^-{\cong}_-{I(\mathfrak{s}^0_{K_1}, \mathfrak{s}_{K_1})^{-1}} 
}
\]
Here, $\bar{I}_{K_0}$ and $I_{K_1}$ are the morphisms (\ref{knot-conormal}) from Theorem \ref{thm-KCH}.
$I(\mathfrak{s}^0_{K_i}, \mathfrak{s}_{K_i})$ for $i\in \{0,1\}$ are the isomorphisms in Proposition \ref{prop-spin}. The vertical morphism $\Phi_{L_{\varphi}}$ is given by Proposition \ref{prop-DGA-map-conormal}.
From the relation between each morphism and the $R$-algebra structure on each DGA, 
the relation (\ref{commute-R}) holds if we define $\psi$ by
\[\psi \coloneqq \psi' \circ \psi(\mathfrak{s}^0_{K_1},\mathfrak{s}_{K_1})^{-1}  \circ  (\ref{isom-i-pm}) \circ  \psi(\mathfrak{s}^0_{K_0},\mathfrak{s}_{K_0}) \circ (\psi')^{-1}  ,\] 
where $\psi'$ is defined in Theorem \ref{thm-KCH} and $\psi(\mathfrak{s}^0_{K_i},\mathfrak{s}_{K_i})$ for $i\in \{0,1\}$ are given by (\ref{isom-s-s0}).
We compute that for $i\in \{0,1\}$
\[\psi(\mathfrak{s}^0_{K_i},\mathfrak{s}_{K_i}) \circ (\psi')^{-1} \colon \lambda \mapsto -\lambda,\ \mu\mapsto \mu,\ U\mapsto U,\]
then $\psi$ is described as in the theorem
by using the integers $a,b,c$ and $d$ which determine (\ref{isom-i-pm}).
\end{proof}

We recall the definition of augmentations of a DGA. See, for instance, \cite[Definition 2.6]{N-intro}.
\begin{defi}\label{def-aug}
Let $\bold{k}$ be a field and let $(\mathcal{A}_*,\partial)$ be a unital non-commutative DGA over $R$. A ring homomorphism $\epsilon \colon \mathcal{A}_* \to \bold{k}$ is called an \textit{augmentation} of  $(\mathcal{A}_*,\partial)$ to $\bold{k}$ if $\epsilon (\bold{1}) =1$, $\epsilon(x) =0$ for any $x\in \mathcal{A}_p$ with $p\neq 0$, and $\epsilon \circ \partial \colon \mathcal{A}_1 \to \bold{k}$ is the zero map.
Here, $\bold{1}$ is the unit of $\mathcal{A}_*$.
\end{defi}

In the following, given an augmentation $\epsilon$ as in the definition, we abbreviate $\epsilon(r\cdot \bold{1})$ by $\epsilon(r)$ for any $r\in R$.

Let us also consider a knot invariant defined from $\bold{k}$-valued augmentations of the knot DGA.
\begin{defi}[Definition 5.1 of \cite{N-intro}]\label{def-aug-var}
Fix a field $\bold{k}$.
For an oriented knot $K$ in $\R^3$, its \textit{augmentation variety} over $\bold{k}$ is a subset of $(\bold{k}^*)^3$ defined by
\[ V_{\bold{k}}(K) \coloneqq \left\{ (x,y,Z) \in (\bold{k}^*)^3\  \middle| \begin{array}{l}  \text{there exists an augmentation }\epsilon \text{ of } (\mathcal{A}^{\knot}_*(K),\partial_K) \text{ to } \bold{k}  \\
\text{such that } \epsilon(\lambda)=x,\ \epsilon (\mu )= y \text{ and }\epsilon(U )=Z
\end{array}
\right\}. \]
\end{defi}

From Theorem \ref{thm-map-HC}, we obtain a refinement of \cite[Theorem 1.1]{O}.
\begin{cor}\label{cor-aug}
There exist $a\in \{\pm 1\}$ and integers $b,c$ and $d$ for which we have an injective map
\[ V_{\bold{k}}(K_1) \hookrightarrow V_{\bold{k}}(K_0) \colon (x,y,Z) \mapsto (x^ay^{b}Z^{c}, y^a Z^{d} ,Z).\]
\end{cor}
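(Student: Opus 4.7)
The plan is to push augmentations forward along the DGA map $\Psi_{\varphi}$ produced by Theorem \ref{thm-map-HC}. Concretely, given $(x,y,Z)\in V_{\bold{k}}(K_1)$, choose an augmentation $\epsilon_1\colon \mathcal{A}^{\knot}_*(K_1)\to \bold{k}$ with $\epsilon_1(\lambda)=x$, $\epsilon_1(\mu)=y$, $\epsilon_1(U)=Z$, and set
\[
\epsilon_0 \coloneqq \epsilon_1 \circ \Psi_{\varphi}\colon \mathcal{A}^{\knot}_*(K_0) \to \bold{k}.
\]
The map $V_{\bold{k}}(K_1)\to V_{\bold{k}}(K_0)$ of the statement will then be defined by sending $(x,y,Z)$ to the triple $(\epsilon_0(\lambda),\epsilon_0(\mu),\epsilon_0(U))$.

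First I would verify that $\epsilon_0$ meets all three conditions of Definition \ref{def-aug}. Unitality $\epsilon_0(\mathbf{1})=1$ follows because $\Psi_{\varphi}$ is a unital ring homomorphism. Vanishing in positive degree follows because $\Psi_{\varphi}$ is graded and $\epsilon_1$ vanishes in positive degrees. The relation $\epsilon_0\circ\partial_{K_0}=0$ on $\mathcal{A}_1^{\knot}(K_0)$ is immediate from the chain-map identity $\Psi_{\varphi}\circ\partial_{K_0}=\partial_{K_1}\circ\Psi_{\varphi}$ of Theorem \ref{thm-map-HC} together with $\epsilon_1\circ\partial_{K_1}=0$.

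Second, I would read off the three special values using the intertwining relation (\ref{commute-R}), $\Psi_{\varphi}(r\cdot \mathbf{1})=\psi(r)\cdot \mathbf{1}$, and the explicit form of $\psi$ recorded in Theorem \ref{thm-map-HC}:
\[
\epsilon_0(\lambda)=\epsilon_1(\lambda^a\mu^b U^c)=x^a y^b Z^c,\qquad \epsilon_0(\mu)=\epsilon_1(\mu^a U^d)=y^a Z^d,\qquad \epsilon_0(U)=\epsilon_1(U)=Z.
\]
All three lie in $\bold{k}^*$ since $x,y,Z\in\bold{k}^*$, so $(x^a y^b Z^c,y^a Z^d,Z)\in V_{\bold{k}}(K_0)$, giving the desired map with the prescribed formula.

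For injectivity, which is the only remaining point, the third coordinate of the image is $Z$ itself, so $Z$ is recovered directly. The relation $y^a Z^d=(\text{second coordinate})$ determines $y^a$ and hence $y$ uniquely because $a\in\{\pm 1\}$; the relation $x^a y^b Z^c=(\text{first coordinate})$ then determines $x^a$ and hence $x$ for the same reason. Thus the map is injective. The whole argument is a straightforward formal consequence of Theorem \ref{thm-map-HC}, so I do not expect any substantive obstacle; all of the analytic and homological work has already been absorbed into the construction of $\Psi_{\varphi}$ in the preceding sections.
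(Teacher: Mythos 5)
Your proposal is correct and follows essentially the same route as the paper: both define the map by composing an augmentation $\epsilon_1$ of the knot DGA of $K_1$ with $\Psi_{\varphi}$ from Theorem \ref{thm-map-HC} and read off the coordinates via the intertwining relation (\ref{commute-R}). Your explicit verification of the augmentation axioms and of injectivity (recovering $Z$, then $y$, then $x$ using $a\in\{\pm1\}$) just spells out details the paper leaves implicit.
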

\begin{proof}
Using $\Psi_{\varphi}$ of Theorem \ref{thm-map-HC}, it follows that
for any augmentation $\epsilon$ of $(\mathcal{A}_*^{\knot}(K_1),\partial_{K_1})$ to $\bold{k}$ such that $(\epsilon(\lambda), \epsilon(\mu ) ,\epsilon(U ) )= (x,y,Z)$,
the composite map $\epsilon \circ \Psi_{\varphi} \colon \mathcal{A}_*^{\knot}(K_0) \to \bold{k}$ is an augmentation of $(\mathcal{A}_*^{\knot}(K_0),\partial_{K_0})$ to $\bold{k}$ such that
\[(\epsilon \circ \Psi_{\varphi} (\lambda), \epsilon \circ \Psi_{\varphi} (\mu), \epsilon \circ \Psi_{\varphi} (U ))= (x^ay^bZ^c, y^aZ^d,Z). \]
From the definition of $V_{\bold{k}}(K_i)$ for $i\in \{0,1\}$, we obtain a map from $V_{\bold{k}}(K_1)$ to $V_{\bold{k}}(K_0)$ as in the assertion.
\end{proof}


The next proposition is used to prove Theorem \ref{thm-main} and Theorem \ref{thm-eight} by taking $\bold{k}=\Q$.
Consider a projection $\pi_{y,Z}\colon (\bfk^*)^3 \to (\bfk^*)^2\colon (x,y,Z) \mapsto (y,Z)$.

\begin{prop}\label{prop-unknot-aug}
Suppose that $K_1$ is the unknot. Then,
\[  \{ (y,Z) \in (\bfk^*)^2 \mid y\neq Z^n \text{ for all }n\in \Z \} \subset \pi_{y,Z} ( V_{\bfk}(K_0)). \]
\end{prop}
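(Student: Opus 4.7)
The strategy is to apply Corollary \ref{cor-aug} in the case $K_1 = O$ and combine it with the explicit description $V_{\bfk}(O) = \{(x,y,Z) \in (\bfk^*)^3 \mid Z - x - y + xy = 0\}$ recalled in the introduction. Once the corollary produces an injective monomial map from $V_{\bfk}(O)$ into $V_{\bfk}(K_0)$, the claim becomes a purely elementary surjectivity check on the $(y,Z)$-projection.

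First I would invoke Corollary \ref{cor-aug} with $K_1 = O$ to obtain $a \in \{\pm 1\}$ and integers $b, c, d$ together with an injection
\[
V_{\bfk}(O) \hookrightarrow V_{\bfk}(K_0), \qquad (x,y,Z) \mapsto (x^a y^b Z^c,\ y^a Z^d,\ Z).
\]
After composing with $\pi_{y,Z}$, the exponents $b$ and $c$ play no role and the resulting image in $(\bfk^*)^2$ is $\{(y^a Z^d, Z) \mid (x,y,Z) \in V_{\bfk}(O)\}$.

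Next I would analyze $\pi_{y,Z}(V_{\bfk}(O))$ using the equation $Z - x - y + xy = 0$: for any $y \in \bfk^* \setminus \{1\}$ and $Z \in \bfk^*$ with $Z \neq y$, the element $x := (Z-y)/(1-y)$ lies in $\bfk^*$, so $(y,Z) \in \pi_{y,Z}(V_{\bfk}(O))$. Consequently $\pi_{y,Z}(V_{\bfk}(K_0))$ contains
\[
\{(y^a Z^d,\ Z) \mid y, Z \in \bfk^*,\ y \neq 1,\ y \neq Z\}.
\]

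Finally, given $(Y, Z) \in (\bfk^*)^2$ with $Y \neq Z^n$ for every $n \in \Z$, I would set $y := (Y Z^{-d})^a \in \bfk^*$, which satisfies $y^a Z^d = Y$ since $a^2 = 1$. The auxiliary conditions $y \neq 1$ and $y \neq Z$ translate respectively into $Y \neq Z^d$ and $Y \neq Z^{d+a}$, both automatic because $d, d+a \in \Z$. Hence $(Y, Z) \in \pi_{y,Z}(V_{\bfk}(K_0))$. The statement contains no substantive obstacle: the map $y \mapsto y^a Z^d$ on $\bfk^*$ is a bijection, and restricting the source to $\bfk^* \setminus \{1, Z\}$ removes at most the two target values $Z^d$ and $Z^{d+a}$, so any $Y$ avoiding all integer powers of $Z$ is hit.
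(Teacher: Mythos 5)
Your proposal is correct and follows essentially the same route as the paper: the paper's proof also invokes Corollary \ref{cor-aug}, uses the computation $V_{\bfk}(O)=\{Z-x-y+xy=0\}$, and exhibits the augmentation with $\epsilon_1(\mu)=(yZ^{-d})^a$ and $\epsilon_1(\lambda)=\frac{Z-(yZ^{-d})^a}{1-(yZ^{-d})^a}$, which is exactly your $x=(Z-y)/(1-y)$ evaluated at $y=(YZ^{-d})^a$. The nondegeneracy conditions you check ($Y\neq Z^d$ and $Y\neq Z^{d+a}$) are precisely the paper's observations that $yZ^{-d}\neq 1$ and $yZ^{-d}\neq Z^a$.
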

\begin{proof}
If $K_1$ is the unknot,
from the trivial braid on a single strand, we can compute as in \cite[Example 3.13]{N-intro} that $\mathcal{A}^{\knot}_0(K_1) = R=\Z[\lambda^{\pm},\mu^{\pm},U^{\pm}]$ and $\partial_{K_1}(\mathcal{A}_1^{\knot}(K_1))$ is the ideal generated by
\[ U-\lambda-\mu+\lambda \mu .\]
Let us take $a\in \{\pm 1\}$ and $b,c,d\in \Z$ to be the integers in Corollary \ref{cor-aug}.
For any $y,Z\in \bold{k}^*$ satisfying $y \notin \{Z^n \mid n\in \Z \}$, we take an augmentation $\epsilon_1 \colon \mathcal{A}^{\knot}_*(K_1) \to \bold{k}$ determined by
\[\begin{array}{ccc} \epsilon_1 (\lambda) =  \displaystyle{ \frac{Z - (y\cdot Z^{-d})^a}{1- (y\cdot Z^{-d})^a} } , & \epsilon_1 (\mu)= (y\cdot Z^{-d})^a, & \epsilon_1 (U) =Z .\end{array}\]
Note that
$ y\cdot Z^{-d} \neq 1 $ and $y\cdot Z^{-d} \neq Z^a$ from the condition on $y$ and $Z$.
By the map of Corollary \ref{cor-aug}, $(\epsilon_1(\lambda), (y\cdot Z^{-d})^a, Z)\in V_{\bfk}(K_1)$ is sent to an element of $V_{\bfk}(K_0)$ whose $(y,Z)$-component is
$ ( (y\cdot Z^{-d})  \cdot Z^d    , Z ) = (y,Z)$.
\end{proof}

\subsection{Proof of Theorem \ref{thm-1} for $K'\# T_{(2,2m+1)}$ ($m\neq 0,-1$)}\label{subsec-thm1}

\subsubsection{Review of the definition of $HC_0(K)$}\label{subsubsec-HC0}
First, let us review how to compute $HC_0(K)$ for an oriented knot $K$. We refer to \cite[Section 3]{N-intro}.
Note that since $\mathcal{A}^{\knot}_p(K)=0$ for $p<0$, $HC_0(K) = \mathcal{A}^{\knot}_0(K)/ \partial_K(\mathcal{A}^{\knot}_1(K)) $.

For any $n\in \Z_{\geq 1}$, let $B_n$ denote the group of braids on $n$ strands, which is generated by $\{\sigma_1,\dots ,\sigma_{n-1}\}$ with relations $\sigma_i\sigma_{i+1}\sigma_i = \sigma_{i+1}\sigma_i \sigma_{i+1}$ for   $1 \leq i \leq n-2$ and  $\sigma_i\sigma_j = \sigma_j\sigma_i$ for $|i - j| \geq 2$.
Let $\mathcal{A}_n$ be the $R$-algebra freely generated by the set $\{a_{i,j} \mid i,j \in \{1,\dots ,n\},\ i\neq j\}$.
Then, a group homomorphism $B_{n} \to \mathrm{Aut} (\mathcal{A}_{n}) \colon B \mapsto \phi_{B}$ is defined as in \cite[Definition 3.3]{N-intro}.
It is determined by $\phi_{\sigma_k}\colon \mathcal{A}_n\to \mathcal{A}_n$ for each generator $\sigma_k$, and $\phi_{\sigma_k}$ is given by
\begin{align}\label{phi-sigma}
\phi_{\sigma_k}\colon
\begin{cases} 
a_{i,j} \mapsto a_{i,j} & \text{ if }i,j \neq k,k+1, \\
a_{k+1,i} \mapsto a_{k,i} & \text{ if } i \neq k,k+1 , \\
a_{i,k+1} \mapsto a_{i,k} & \text{ if } i\neq k,k+1, \\
a_{k,k+1} \mapsto -a_{k+1,k} , &  \\
a_{k+1,k} \mapsto -a_{k,k+1}, & \\
a_{k,i} \mapsto a_{k+1,i}- a_{k+1,k}a_{k,i} & \text{ if }i\neq k,k+1 , \\
a_{i,k} \mapsto a_{i,k+1}-a_{i,k}a_{k,k+1} & \text{ if }i\neq k,k+1 .
\end{cases}
\end{align}
See also \cite[Section 2.1]{EENS}. We remark that $\phi_{B\cdot B'} = \phi_{B}\circ \phi_{B'}$ for any $B,B'\in B_n$.

The automorphism $\phi_B\in \mathrm{Aut}( \mathcal{A}_{n})$ for $B\in B_n$ is extended to an automorphism on $\mathcal{A}_{n+1}$ as follows:
We consider the group homomorphism $B_{n+1} \to \mathrm{Aut}(\mathcal{A}_{n+1})$ defined as above and an injective map $B_n \to B_{n+1}$ defined by adding the extra $(n+1)$-th strand to any braid on $n$ strands (see \cite[Remark 3.5]{N-intro}).
Then, for any $B\in B_n$, $\phi_B \in \mathrm{Aut} (\mathcal{A}_{n+1})$ is defined via the composite map $B_n\hookrightarrow B_{n+1} \to \mathrm{Aut} (\mathcal{A}_{n+1})$.
In addition, let us label the $(n+1)$-th strand by $*$ and denote $a_{i,n+1}$ by $a_{i,*}$ and $a_{n+1,i}$ by $a_{*,i}$ for every $i\in \{1,\dots ,n\}$.

We define $n\times n$ matrices $\bold{A}$ and $\widehat{\bold{A}}$ by
\[\begin{array}{ll}
\bold{A}_{i,j} \coloneqq \begin{cases} a_{i,j} & \text{ if }i<j , \\ 1-\mu & \text{ if }i=j , \\ -\mu a_{i,j} & \text{ if }i>j , \end{cases}  & 
\widehat{\bold{A}}_{i,j} \coloneqq \begin{cases} U a_{i,j} & \text{ if }i<j , \\ U-\mu & \text{ if }i=j , \\ -\mu a_{i,j} & \text{ if }i>j . \end{cases}
\end{array}\]

Fix $B\in B_n$ which has the writhe $w$ (the sum of the exponents in the braid
word).
Let $\boldsymbol{\Lambda}$ be an $n\times n$ diagonal matrix such that $\boldsymbol{\Lambda}_{1,1}= \lambda \mu^{w}U^{-(w-n+1)/2}$ and $\boldsymbol{\Lambda}_{i,i} =1$ for every $i\in \{2,\dots ,n\}$.
Let $\boldsymbol{\Phi}^L_{B}$ be an $n\times n$ matrix whose entries $ (\boldsymbol{\Phi}^L_{B})_{i,j} \in \mathcal{A}_n$ for $i,j\in \{1,\dots ,n\}$ are determined by
\[ \phi_{B}(a_{i,*}) = \sum_{j=1}^n (\boldsymbol{\Phi}^L_{B})_{i,j}a_{j,*}. \]

Let $K$ be the oriented knot in $\R^3$ given by the closure of the braid $B$.
By \cite[Theorem 4.1, Remark 4.2]{N-intro}, $HC_0(K)$ is the quotient of $\mathcal{A}_n$ by the two-sided ideal generated by all entries of the two matrices
\begin{align}\label{ideal-generator}
\begin{array}{cc}
\bold{A} - \boldsymbol{\Lambda} \cdot \phi_B (\bold{A}) \cdot \boldsymbol{\Lambda}^{-1}, &
\widehat{\bold{A}} - \boldsymbol{\Lambda} \cdot \boldsymbol{\Phi}^L_{B} \cdot \bold{A},
\end{array}
\end{align}
where $(\phi_B (\bold{A}))_{i,j} \coloneqq \phi_{B} (\bold{A}_{i,j})$ for $i,j\in \{1,\dots ,n\}$. The isomorphism class of $HC_0(K)$ as a unital $R$-algebra depends only on the isotopy class of $K$ as an oriented knot. See \cite[Theorem 3.15]{N-intro} and \cite[Theorem 1.4 (2)]{N-comb}.

We consider a commutative $R$-algebra $HC_0^{\ab}(K)$ defined as the quotient of $HC_0(K)$ by the relations $\xi \eta = \eta \xi$ for all $\xi,\eta \in HC_0(K)$.
Using the $R$-algebra $R[a_{i,j}\mid 1\leq i,j\leq n,\ i\neq j]$ of polynomials  and a natural map
\begin{align}\label{map-pi}
\pi_n\colon \mathcal{A}_n\to R[a_{i,j}\mid 1\leq i,j\leq n,\ i\neq j] \colon a_{i,j} \mapsto a_{i,j},
\end{align}
we have
\[ HC_0^{\ab}(K) = R[a_{i,j}\mid 1\leq i,j\leq n,\ i\neq j] / \mathcal{I}_{B},\]
where $\mathcal{I}_{B}$ is the ideal generated by the images under $\pi_n$ of all entries of the two matrices (\ref{ideal-generator}).

\subsubsection{On augmentations of $\mathcal{A}^{\knot}_*(K\# T_{(2,2m+1)})$ for $m\geq 0$}\label{subsubsec-torus}

Fix any $B\in B_n$ whose closure gives a knot $K$ in $\R^3$.
We consider a braid $B^{\#}_m$ on $(n+2)$ strands defined by 
\[ B^{\#}_m \coloneqq  \sigma_n \cdot  (\sigma_{n+1})^{2m+1} \cdot B \in B_{n+2} \]
for every $m\in \Z_{\geq 0}$.
Then, the closure of $B^{\#}_m$ gives the connected sum $K\# T_{(2,2m+1)}$ of $K$ and the $(2,2m+1)$-torus knot.
(When $m=0$, $T_{(2,1)}$ is the unknot.)


By using  (\ref{phi-sigma}), let us compute
$\phi_{B^{\#}_m}(a_{n+2,*})$, which is equal to
\[\left( \phi_{\sigma_n}\circ \phi_{(\sigma_{n+1})^{2m+1}} \circ \phi_B \right) (a_{n+2,*}) .\]
First, since $\phi_{\sigma_k}(a_{n+2,*}) = a_{n+2,*}$ for every $k\in \{1,\dots ,n-1\}$, $\phi_B(a_{n+2,*}) = a_{n+2,*}$. Next, we compute $\phi_{(\sigma_{n+1})^{2m+1}}(a_{n+2,*})$ by induction on $m=0,1,\dots $.

\begin{lem}
Let $\hat{f}_m$ and $\hat{g}_m$ be elements of the free ring $ \Z\la a_{n+1,n+2}, a_{n+2,n+1} \ra$ determined inductively on $m=0,1,\dots$ by $\hat{f}_0=1$, $\hat{g}_0=0$, and
\begin{align}\label{hat-fg-induction}
 \begin{cases} \hat{f}_{m+1} = \hat{f}_m \cdot (1- a_{n+1,n+2}a_{n+2,n+1}) -\hat{g}_m \cdot a_{n+2,n+1} ,\\
 \hat{g}_{m+1} =  \hat{f}_m \cdot a_{n+1,n+2} +\hat{g}_m . \end{cases}
 \end{align}
 Then, for every $m\in \Z_{\geq 0}$,
 \begin{align}\label{hat-fg}
\phi_{(\sigma_{n+1})^{2m+1}} (a_{n+2,*}) = \hat{f}_m \cdot a_{n+1,*} + \hat{g}_m\cdot a_{n+2,*}. 
\end{align}
\end{lem}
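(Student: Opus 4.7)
The plan is to prove (\ref{hat-fg}) by induction on $m$. For the base case $m=0$, formula (\ref{phi-sigma}) with $k=n+1$ (applied via the rule $a_{k+1,i}\mapsto a_{k,i}$ for $i\neq k,k+1$, here with $i=*$) yields $\phi_{\sigma_{n+1}}(a_{n+2,*}) = a_{n+1,*}$, which agrees with $\hat f_0\, a_{n+1,*} + \hat g_0\, a_{n+2,*}$ since $\hat f_0=1$ and $\hat g_0=0$.

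For the inductive step, I would factor $(\sigma_{n+1})^{2m+3} = (\sigma_{n+1})^2\cdot (\sigma_{n+1})^{2m+1}$, so that
\[
\phi_{(\sigma_{n+1})^{2m+3}}(a_{n+2,*}) \;=\; \phi_{\sigma_{n+1}}^{2}\bigl(\phi_{(\sigma_{n+1})^{2m+1}}(a_{n+2,*})\bigr).
\]
The key is the action of $\phi_{\sigma_{n+1}}^{2}$ on the relevant generators. Reading off (\ref{phi-sigma}) with $k=n+1$, one gets
\[
\begin{array}{ll}
\phi_{\sigma_{n+1}}(a_{n+1,*}) = a_{n+2,*} - a_{n+2,n+1}\,a_{n+1,*}, & \phi_{\sigma_{n+1}}(a_{n+2,*}) = a_{n+1,*}, \\[2pt]
\phi_{\sigma_{n+1}}(a_{n+1,n+2}) = -a_{n+2,n+1}, & \phi_{\sigma_{n+1}}(a_{n+2,n+1}) = -a_{n+1,n+2}.
\end{array}
\]
Applying $\phi_{\sigma_{n+1}}$ once more gives
\begin{align*}
\phi_{\sigma_{n+1}}^{2}(a_{n+1,*}) &= (1 - a_{n+1,n+2}a_{n+2,n+1})\,a_{n+1,*} + a_{n+1,n+2}\,a_{n+2,*}, \\
\phi_{\sigma_{n+1}}^{2}(a_{n+2,*}) &= a_{n+2,*} - a_{n+2,n+1}\,a_{n+1,*},
\end{align*}
while $\phi_{\sigma_{n+1}}^{2}$ fixes each of $a_{n+1,n+2}$ and $a_{n+2,n+1}$, hence fixes every element of the subring $\Z\langle a_{n+1,n+2},a_{n+2,n+1}\rangle$; in particular $\phi_{\sigma_{n+1}}^{2}(\hat f_m) = \hat f_m$ and $\phi_{\sigma_{n+1}}^{2}(\hat g_m) = \hat g_m$.

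Using that $\phi_{\sigma_{n+1}}^{2}$ is a ring homomorphism, I apply it to the inductive hypothesis $\hat f_m\,a_{n+1,*} + \hat g_m\,a_{n+2,*}$ and substitute the two displays above. Collecting the left coefficients of $a_{n+1,*}$ and $a_{n+2,*}$ yields, respectively,
\[
\hat f_m(1 - a_{n+1,n+2}a_{n+2,n+1}) - \hat g_m\,a_{n+2,n+1} \quad\text{and}\quad \hat f_m\,a_{n+1,n+2} + \hat g_m,
\]
which are exactly $\hat f_{m+1}$ and $\hat g_{m+1}$ by the recursion (\ref{hat-fg-induction}). This completes the induction. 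The whole argument is a direct bookkeeping computation; the only point demanding attention is preserving the left-to-right ordering of factors, since $\mathcal A_{n+2}$ is noncommutative, but no genuine obstacle arises.
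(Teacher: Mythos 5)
Your proof is correct and follows essentially the same route as the paper: induction on $m$, with the inductive step given by applying $\phi_{\sigma_{n+1}}$ twice. The only (harmless) difference is organizational — you precompute $\phi_{\sigma_{n+1}}^{2}$ on the generators and observe that it fixes the subring $\Z\langle a_{n+1,n+2},a_{n+2,n+1}\rangle$, whereas the paper applies $\phi_{\sigma_{n+1}}$ one step at a time and tracks the intermediate swap $a_{n+1,n+2}\mapsto -a_{n+2,n+1}$, $a_{n+2,n+1}\mapsto -a_{n+1,n+2}$ through the even power $2m+2$; both computations land on the same recursion.
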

\begin{proof}
We prove by induction on $m$.
Since $\phi_{\sigma_{n+1}}(a_{n+2,*}) = a_{n+1,*}$, (\ref{hat-fg}) for $m=0$ is satisfied by setting $\hat{f}_0 = 1$ and $\hat{g}_0=0$.
Suppose that (\ref{hat-fg}) holds for some $m\in \Z_{\geq 0}$.
For readability, let us rewrite
\[\begin{array}{cccc}
a_{n+1,n+2}=p, & a_{n+2,n+1}=q, & \hat{f}_m = \hat{f}_m(p,q), & \hat{g}_m=\hat{g}_m(p,q).
\end{array}\] 
Note that $\phi_{\sigma_{n+1}}$ maps $p$ to $-q$ and $q$ to $-p$.
Let $\hat{f}_m(-q,-p) , \hat{g}_m(-q,-p) \in \Z\la p,q\ra$ be the elements obtained from $\hat{f}_m(p,q)$ and $\hat{g}_m(p,q)$ by replacing $p$ with $-q$ and $q$ with $-p$. Then, we can compute that
\begin{align*}
 \phi_{(\sigma_{n+1})^{2m+2}} (a_{n+2},*) 
 & = \phi_{\sigma_{n+1}} \left( \hat{f}_m(p,q) \cdot a_{n+1,*} + \hat{g}_m(p,q)\cdot a_{n+2,*}\right) \\
& = \hat{f}_m(-q,-p) \cdot (a_{n+2,*}-q\cdot a_{n+1,*}) + \hat{g}_m(-q,-p)\cdot a_{n+1,*} \\
& = \left( -\hat{f}_m(-q,-p) q +  \hat{g}_m(-q,-p) \right) \cdot a_{n+1,*} +  \hat{f}_m(-q,-p) \cdot a_{n+2,*}, \\
 \phi_{(\sigma_{n+1})^{2m+3}} (a_{n+2},*) 
& = \left( -\hat{f}_m(p,q)(-p) +\hat{g}_m(p,q) \right) \cdot (a_{n+2,*}-q\cdot a_{n+1,*}) + \hat{f}_m(p,q) \cdot a_{n+1,*} \\
&=\left( \hat{f}_m(p,q) (1 -pq) -\hat{g}_m(p,q)q \right)\cdot a_{n+1,*} + \left( \hat{f}_m(p,q) p +\hat{g}_m(p,q) \right)\cdot a_{n+2,*}.
\end{align*}
Therefore, if we set $\hat{f}_{m+1}$ and $\hat{g}_{m+1}$ by (\ref{hat-fg-induction}),
 then (\ref{hat-fg}) with $m$ replaced by $m+1$ holds.
 \end{proof}
We apply $\phi_{\sigma_n}$ to (\ref{hat-fg}), then
\begin{align*}
 \phi_{B^{\#}_m}(a_{n+2,*}) & = \left( \phi_{\sigma_n}\circ \phi_{(\sigma_{n+1})^{2m+1}} \circ \phi_B \right) (a_{n+2,*}) \\ 
 & =\left( \phi_{\sigma_n}\circ \phi_{(\sigma_{n+1})^{2m+1}} \right) (a_{n+2,*}) \\
&= \phi_{\sigma_n}(\hat{f}_m) \cdot a_{n,*} + \phi_{\sigma_n}(\hat{g}_m)\cdot a_{n+2,*}. 
\end{align*}
Since $\phi_{\sigma_n}$ maps $a_{n+1,n+2}$ to $a_{n,n+2}$ and $a_{n+2,n+1}$ to $a_{n+2,n}$,
the elements $\phi_{\sigma_n}(\hat{f}_m), \phi_{\sigma_n}(\hat{g}_m) \in \Z\la a_{n,n+2}, a_{n+2,n} \ra$ are obtained from $\hat{f}_m$ and $\hat{g}_m$ by replacing $a_{n+1,n+2}$ with $a_{n,n+2}$ and $a_{n+2,n+1}$ with $a_{n+2,n}$. 

It follows that the bottom ($n+2$)-th row of $\boldsymbol{\Phi}^L_{B^{\#}_m}$ is
\[ \begin{pmatrix} 0 & \cdots & 0 & \phi_{\sigma_n}(\hat{f}_m) & 0 & \phi_{\sigma_n}(\hat{g}_m)   \end{pmatrix}.\]
Let $c^m_{i,j} \in \mathcal{A}_{n+2}$ denote the $(i,j)$-th entry of  $\widehat{\bold{A}} - \boldsymbol{\Lambda} \cdot \boldsymbol{\Phi}^{L}_{B^{\#}_m} \cdot \bold{A}$.
We focus on the $(n+2,n)$-th and $(n+2,n+2)$-th entries:
\begin{align}\label{cm-ij}
\begin{split}
c^m_{n+2,n} & =  \widehat{\bold{A}}_{n+2,n} - \left( \phi_{\sigma_n}(\hat{f}_m) \cdot  \bold{A}_{n,n} + \phi_{\sigma_n}(\hat{g}_m) \cdot \bold{A}_{n+2,n} \right) \\
& = - \mu a_{n+2,n} - \left( \phi_{\sigma_n}(\hat{f}_m) \cdot (1-\mu) + \phi_{\sigma_n}(\hat{g}_m) \cdot (-\mu a_{n+2,n}) \right), \\
c^m_{n+2,n+2} & =  \widehat{\bold{A}}_{n+2,n+2} - \left( \phi_{\sigma_n}(\hat{f}_m)\cdot  \bold{A}_{n,n+2} + \phi_{\sigma_n}(\hat{g}_m) \cdot \bold{A}_{n+2,n+2} \right) \\
&= (U-\mu) - \left( \phi_{\sigma_n}(\hat{f}_m) \cdot  a_{n,n+2} + \phi_{\sigma_n}(\hat{g}_m) \cdot (1-\mu)\right) .
\end{split}
\end{align}
Using the map $\pi_{n+2}$ of (\ref{map-pi}), let us define elements of $R[a_{i,j} \mid 1\leq i,j \leq n+2,\ i\neq j]$ by
 \[\begin{array}{cc}
 f_m \coloneqq \pi_{n+2} ( \phi_{\sigma_n}(\hat{f}_m)), & g_m \coloneqq \pi_{n+2} ( \phi_{\sigma_n}(\hat{g}_m)) , \end{array}\]
and denote $X = a_{n,n+2}$ and $Y = a_{n+2,n}$.
By (\ref{hat-fg-induction}), $f_m$ and $g_m$ are polynomials in $\Z[X,Y]$ determined inductively by $f_0= 1$, $g_0=0$ and
\begin{align}\label{induction-fmgm}
\begin{array}{cc}
f_{m+1} = (1-XY) f_m - Yg_m , & g_{m+1} = X f_m + g_m,
\end{array}
\end{align}
for every $m\in \Z_{\geq 0}$.
Let us also define $F_m,G_m\in \Z[\mu] [X,Y]$ by
\[ \begin{array}{cc} 
F_m \coloneqq (1-\mu) f_m - \mu Y g_m, & G_m \coloneqq  X f_m+(1-\mu)g_m .
\end{array}\]
Then, $\pi_{n+2}(c^m_{n+2,n}) = -\mu Y - F_m$ and $\pi_{n+2} ( c^m_{n+2,n+2}) = U-\mu - G_m$ by (\ref{cm-ij}).

\begin{lem}\label{lem-FG}
Let us define  $h_m,k_m\in \Z[\mu][T]$ inductively on $m=0,1,\dots$ by $h_0=1-\mu$, $k_0= 1$ and
\begin{align}\label{induction-hk}
 \begin{array}{cc} h_{m+1} = h_m - T \cdot  k_m , & k_{m+1} = h_m + (1-T)\cdot k_m, \end{array}
 \end{align}
Then, for every $m\in \Z_{\geq 0}$,
\begin{align}\label{hk}
\begin{array}{cc} F_m (X,Y) = h_m(XY), & G_m(X,Y) = X\cdot k_m(XY) .
\end{array}
\end{align}

\end{lem}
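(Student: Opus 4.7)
My approach is induction on $m$. The base case $m=0$ is immediate from the initial data: $F_0 = (1-\mu)\cdot 1 - \mu Y \cdot 0 = 1-\mu = h_0(XY)$, and $G_0 = X\cdot 1 + (1-\mu)\cdot 0 = X = X\cdot k_0(XY)$.

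For the inductive step, the plan is to first re-express the two-variable recursion (\ref{induction-fmgm}) on $(f_m,g_m)$ as a recursion on $(F_m,G_m)$ themselves. Substituting $f_{m+1} = (1-XY)f_m - Yg_m$ and $g_{m+1} = Xf_m + g_m$ into the definitions of $F_{m+1}$ and $G_{m+1}$ and collecting the coefficients of $f_m$ and $g_m$ yields, after a short computation, the clean recursion
\begin{align*}
F_{m+1} &= F_m - Y\cdot G_m, \\
G_{m+1} &= X\cdot F_m + (1-XY)\cdot G_m.
\end{align*}
Granting the induction hypothesis $F_m = h_m(XY)$ and $G_m = X\cdot k_m(XY)$, the product $Y\cdot G_m$ equals $XY\cdot k_m(XY)$, which depends only on $T=XY$. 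Substituting and comparing with (\ref{induction-hk}) gives
\[
F_{m+1} = h_m(XY) - XY\cdot k_m(XY) = h_{m+1}(XY),
\]
and
\[
G_{m+1} = X\bigl(h_m(XY) + (1-XY)\cdot k_m(XY)\bigr) = X\cdot k_{m+1}(XY),
\]
closing the induction.

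There is no real conceptual obstacle here; the argument reduces to a bookkeeping verification of the recursion for $(F_m,G_m)$ above. The structural point worth emphasizing is that although $f_m$ and $g_m$ are polynomials in the two separate variables $X$ and $Y$, the particular $\mu$-weighted combinations defining $F_m$ and $G_m/X$ collapse to polynomials in the single variable $T = XY$. This collapse is the whole content of the lemma and is what makes the one-variable recursion (\ref{induction-hk}) compatible with the two-variable recursion (\ref{induction-fmgm}): the only $X$--$Y$ asymmetry in the reduced recursion enters through the factor $Y\cdot G_m$, which by the induction hypothesis is already homogeneous of the right form to be expressed in $T=XY$ alone.
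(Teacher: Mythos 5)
Your proof is correct and follows essentially the same route as the paper: both derive the intermediate recursion $F_{m+1}=F_m-Y\,G_m$, $G_{m+1}=X\,F_m+(1-XY)\,G_m$ from (\ref{induction-fmgm}) and then apply the induction hypothesis to reduce to the one-variable recursion (\ref{induction-hk}). No gaps.
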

\begin{proof}We prove by induction on $m$.
Since $F_0 =1-\mu$ and $G_0 =X$, (\ref{hk}) for $m=0$ is satisfied by setting $h_0\coloneqq 1-\mu$ and $k_0\coloneqq 1$.
Suppose that (\ref{hk}) holds for some $m \in \Z_{\geq 0}$.
Using (\ref{induction-fmgm}), we compute that
\begin{align*}
F_{m+1}  & =  (1-\mu) f_{m+1} -\mu Y g_{m+1} \\
& =  (1-\mu) ((1-XY)f_m-Yg_m) -\mu Y(Xf_m +g_m) \\
& = (1-\mu-XY)f_m - Yg_m \\
& = ((1 -\mu)f_m -\mu Y g_m) - Y( X f_m+(1-\mu)g_m ) \\
& = F_m - Y G_m \\
& = h_m(XY) - XY \cdot k_m(XY), \\
G_{m+1} & = Xf_{m+1} + (1-\mu)g_{m+1} \\
& = X ((1-XY)f_m-Yg_m) + (1-\mu) (Xf_m +g_m)  \\
& = X(2-\mu-XY) f_m + (1-\mu-XY) g_m \\
& = X ((1-\mu) f_m -\mu Y g_m) + (1-XY)(X f_m +(1-\mu) g_m) \\
& = XF_m +(1-XY)G_m \\
& = X \left( h_m(XY) +(1-XY)\cdot k_m(XY) \right) .
\end{align*}
Therefore, if we set $h_{m+1},k_{m+1} \in \Z[\mu][T]$ by (\ref{induction-hk}),
then (\ref{hk}) with $m$ replaced by $m+1$ holds.
\end{proof}

For every $m\in \Z_{\geq 0}$, let us define a polynomial $P_m \in \Z[\mu,U][T] \subset R[T]$ by
\begin{align}\label{poly-Pm}
 P_m \coloneqq (U-\mu) \cdot h_m + \mu T \cdot k_m . 
 \end{align}
 
%
\begin{prop}\label{prop-Jm}
For any field $\bold{k}$,
\[  \pi_{y,Z}(V_{\bfk}(K\# T_{(2,2m+1)})) \subset \{ (y,Z)\in (\bfk^*)^2 \mid \rest{P_m}{\mu = y, U =Z}  \text{ has a root in }\bfk  \}. \]
\end{prop}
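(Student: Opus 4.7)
The plan is to take an arbitrary augmentation $\epsilon$ of $(\mathcal{A}^{\knot}_*(K\#T_{(2,2m+1)}), \partial_{K\#T_{(2,2m+1)}})$ with $\epsilon(\mu) = y$ and $\epsilon(U) = Z$, and to produce an explicit root of $\rest{P_m}{\mu=y,\,U=Z}$ out of the values of $\epsilon$ on two specific generators of $\mathcal{A}_{n+2}$. Since $\bfk$ is commutative and $\epsilon \circ \partial_{K\#T_{(2,2m+1)}} = 0$ on degree $1$ elements, the restriction of $\epsilon$ to $\mathcal{A}_0^{\knot}(K\#T_{(2,2m+1)})$ factors through $HC_0^{\ab}(K\#T_{(2,2m+1)})$, the commutative quotient described at the end of Subsection \ref{subsubsec-HC0} via the map $\pi_{n+2}$. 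Consequently, $\epsilon$ annihilates every entry of the image under $\pi_{n+2}$ of the matrices (\ref{ideal-generator}) computed for the braid $B^{\#}_m$.

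I would then specialize to the two entries singled out in (\ref{cm-ij}), namely $\pi_{n+2}(c^m_{n+2,n}) = -\mu Y - h_m(XY)$ and $\pi_{n+2}(c^m_{n+2,n+2}) = U - \mu - X\cdot k_m(XY)$, where I write $X = a_{n,n+2}$, $Y = a_{n+2,n}$ and use Lemma \ref{lem-FG} to replace $F_m, G_m$ by $h_m$ and $k_m$. Setting $x_0 := \epsilon(X)$, $y_0 := \epsilon(Y)$ and $t_0 := x_0 y_0 \in \bfk$, applying $\epsilon$ to these two relations yields the pair of scalar equations
\begin{equation*}
\rest{h_m}{\mu=y}(t_0) = -y\, y_0, \qquad x_0\cdot \rest{k_m}{\mu=y}(t_0) = Z - y.
\end{equation*}

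Multiplying the second equation by $y_0$ gives $t_0 \cdot \rest{k_m}{\mu=y}(t_0) = (Z-y)\, y_0$. Combining this with the first equation, and using the definition $P_m = (U-\mu) h_m + \mu T\, k_m$ from (\ref{poly-Pm}), I compute
\begin{equation*}
\rest{P_m}{\mu=y,\,U=Z}(t_0) = (Z-y)\cdot \rest{h_m}{\mu=y}(t_0) + y \cdot t_0 \cdot \rest{k_m}{\mu=y}(t_0) = -y(Z-y)y_0 + y(Z-y)y_0 = 0,
\end{equation*}
so $t_0 \in \bfk$ is a root of $\rest{P_m}{\mu=y,\,U=Z}$ as required. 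There is no serious obstacle here: everything reduces to unpacking definitions, and the only content is the algebraic identity displayed above, which was secretly built into the definition of $P_m$ by taking the particular $\Z[\mu,U]$-linear combination of the two constraints that eliminates $x_0$ and $y_0$.
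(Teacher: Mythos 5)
Your proposal is correct and is essentially the paper's own argument: the paper packages the same two relations $\pi_{n+2}(c^m_{n+2,n})=-\mu Y-F_m$ and $\pi_{n+2}(c^m_{n+2,n+2})=U-\mu-G_m$ into the polynomial identity $P_m(XY)=(U-\mu)(\mu Y+F_m)-\mu Y(U-\mu-G_m)$, yielding an $R$-algebra map $R[T]/(P_m)\to HC_0^{\ab}(K\#T_{(2,2m+1)})$ sending $T\mapsto XY$, and then precomposes with the augmentation. Your scalar computation with $t_0=\epsilon(X)\epsilon(Y)$ is exactly this identity evaluated under $\epsilon$, so the two proofs coincide in substance.
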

\begin{proof}
By definition, the ideal $\mathcal{I}_{B^{\#}_m}$ contains $\pi_{n+2}(c^m_{n+2,n}) = -\mu Y - F_m$ and $ \pi_{n+2} ( c^m_{n+2,n+2}) = U-\mu - G_m$.
Moreover, we have an identity
\begin{align*}
P_m(XY) & =  (U-\mu) \cdot h_m (XY) +  \mu XY \cdot k_m(XY) \\
& = (U-\mu) \left( h_m(XY) + \mu Y \right) - \mu Y\left( U-\mu - X\cdot k_m(XY) \right)  \\
 & = (U-\mu)  (\mu Y +F_m(X,Y)) - \mu Y (U-\mu - G_m(X,Y)),
\end{align*}
which implies that  $P_m(XY) \in \mathcal{I}_{B^{\#}_m}$.
Therefore, there exists a well-defined unital $R$-algebra map
\[J_m\colon R[T] / (P_m ) \to  R[a_{i,j} \mid 1\leq i,j \leq n+2,\ i\neq j]/ \mathcal{I}_{B^{\#}_m} = HC^{\ab}_0(K\# T_{(2,2m+1)}) \]
which maps $T $ to $XY= a_{n,n+2}a_{n+2,n} $.

Given a $\bold{k}$-valued augmentation $\epsilon$ of $\mathcal{A}^{\knot}_*(K\# T_{(2,2m+1)})$, it induces a unital ring homomorphism from  $HC^{\ab}_0(K\# T_{(2,2m+1)})$ to $\bfk$.
By precomposing $J_m$, we obtain a unital ring homomorphism $ R[T] / (P_m )  \to \bold{k}$ which sends $\mu$ to $\epsilon(\mu)$ and $U$ to $\epsilon(U)$. This means that $\rest{P_m}{\mu=\epsilon(\mu),U=\epsilon(U)}\in \bold{k}[T]$ has a root in $\bold{k}$.
Now, the proposition follows from the definition of $V_{\bold{k}}(K\#T_{(2,2m+1)})$. 
\end{proof}

\begin{ex}
Let us check the existence of a map as $J_m$ in a simple case.
We have $h_1 = 1-\mu -T$, $k_1 = (1-\mu) + (1-T) = 2-\mu- T$ and 
\begin{align*}
 P_1 (T) &=  (U-\mu) ( 1-\mu -T) + \mu T(2-\mu-T) \\
 &= -\mu T^2 + ( 3\mu - \mu^2- U ) T +   (U-\mu)(1-\mu) .
 \end{align*}
From a braid representation $(\sigma_1)^3 \in B_2$ of $T_{(2,3)}$, it is not difficult to compute $HC_0(T_{(2,3)})$ as outlined in \cite[Exercise 3.10]{N-intro}.
$HC_0(T_{(2,3)})$ is isomorphic to a commutative $R$-algebra $R[X] / (G_1,G_2)$, where
\[\begin{array}{cc} G_1(X) \coloneqq U X^2 -\mu U X + \lambda \mu^3(1-\mu), & G_2(X) \coloneqq U X^2 + \lambda\mu^2 X + \lambda\mu^2(\mu-U) .\end{array}\]
See \cite[Exercise 5.5]{N-intro}.
We have identities
\begin{align*}
X G_1 + \mu G_2& = UX^3 + \lambda\mu^3( 2-\mu ) X+  \lambda\mu^3(\mu-U), \\ 
 UX (X G_1 + \mu G_2 ) & = (UX^2)^2 + \lambda\mu^3(2-\mu) (U X^2) +  \lambda\mu^3(\mu-U)UX , \\
 \lambda\mu^2(\mu-U) G_1 & = \lambda\mu^2(\mu-U) (UX^2) -\lambda\mu^3(\mu-U) UX + \lambda^2 \mu^5 (1-\mu)(\mu-U) , 
 \end{align*}
and
\begin{align*}
 UX (X G_1 + \mu G_2 )  + \lambda\mu^2(\mu-U) G_1 
=\ &  (UX^2)^2 + \lambda\mu^2 (3\mu -\mu^2 -U) (UX^2) + \lambda^2 \mu^5 (1-\mu)(\mu-U) \\
=\ & - \lambda^2\mu^5\cdot P_1 (- \lambda^{-1} \mu^{-3}U X^2) 
\end{align*}
Therefore, we have a well-defined unital $R$-algebra map 
\[R[T]/(P_1) \to HC^{\mathrm{ab}}_0(T_{(2,3)}) \cong R[X]/(G_1,G_2) \colon T \mapsto - \lambda^{-1} \mu^{-3} UX^2. \]
\end{ex}

\begin{rem}\label{rem-reduction}
Consider the reduction of the coefficient ring from $R$ to $\Z[\lambda^{\pm},\mu^{\pm}]$ by setting $U=1$.
In this case, we obtain $V_{\bfk} \cap \{Z=1\} \subset \{ (x,y) \mid \rest{P_m}{\mu=y,U=1} \text{ has a root in }\bfk \}$.
However, this is a trivial inclusion for the following reason:
By the substitution $U=1$, we have
$\rest{P_0}{U=1} = (1-\mu)^2 + \mu T$, and if we define $h'_m,k'_m \in \Z[\mu^{\pm}]$ by $h'_m\coloneqq \rest{h_m}{T= -\mu^{-1}(1-\mu)^2}$ and $k'_m \coloneqq \rest{k_m}{T= -\mu^{-1}(1-\mu)^2}$, then
\begin{align*}
 \rest{P_{m+1}}{U=1, T =-\mu^{-1}(1-\mu)^2} & =  (1-\mu) \cdot h'_{m+1} + \mu (-\mu^{-1}(1-\mu)^2)\cdot k'_{m+1} \\
 & = (1-\mu) (h'_m + \mu^{-1}(1-\mu)^2k'_m ) - (1-\mu)^2 (h'_m + (1 + \mu^{-1}(1-\mu)^2)k'_m) \\
 & = \mu \left( (1-\mu)h'_m ) - (1-\mu)^2 k'_m \right) \\
 & =\mu \rest{P_{m}}{U=1, T =-\mu^{-1}(1-\mu)^2}.
 \end{align*}
Here, the second equality follows from (\ref{induction-hk}).
By induction on $m=0,1,\dots$, we can see that the polynomial $\rest{P_m}{U=1}\in \Z[\mu^{\pm}][T]$ has a root $T = -\mu^{-1}(1-\mu)^2$.
\end{rem}

\subsubsection{Application of $\Q$-valued augmentations}

\noindent
\textbf{Notation.}
For every $q \in \C (T)$, we define its degree $\deg q\coloneqq \max \{ \deg f , \deg g\} $, where $f,g \in \C[T]$ are coprime polynomials such that $g\neq 0$ and $q=\frac{f}{g}$.
\begin{rem}\label{rem-proj}
Let us take $q=\frac{f}{g} \in \C(T)$ as above.
Then, $\deg q \in \Z_{\geq 0}$ coincides with the degree of the continuous map $\hat{q}\colon \C P^1 \to \C P^1$ such that $\hat{q}([z:1]) = [f(z):g(z)]$ for every $z\in \C$ and
\[ \hat{q} ( [1:0]) = \begin{cases} [\lim_{z\to \infty} \frac{f(z)}{g(z)}  :1]& \text{ if } \deg f \leq \deg g , \\
[1: 0] & \text{ if } \deg f  > \deg g.  \end{cases}\]
Consider $r = \frac{f'}{g'}\in \C(T')$ for $f',g' \in \C(T')$ such that $\rest{g'}{T'= q} \neq 0$ in $\C(T)$. Then, the degree of $\rest{r}{T'= q} \in \C(T)$ is computed by $\deg r\cdot \deg q$, which is the degree of the composite map $\hat{r}\circ \hat{q} \colon \C P^1 \to \C P^1$.
\end{rem}


In the following, we fix $y_0 \in \Q^*\setminus \{1\}$ (one may choose, for instance, $y_0=2$).
By the substitution $\mu= y_0$, let us define $\bar{h}_m,\bar{k}_m \in \Q[T]$ as
\[\begin{array}{cc}
\bar{h}_m\coloneqq \rest{h_m}{\mu=y_0} , & \bar{k}_m\coloneqq \rest{k_m}{\mu=y_0} .
\end{array}\]

\begin{lem}\label{lem-deg}
For every $m\in \Z_{\geq 1}$, $\deg \bar{h}_m =\deg \bar{k}_m =m$, and $\bar{h}_m$ and $T\cdot \bar{k}_m$ are coprime polynomials in $\C[T]$.
In particular, if we define
\[ r_m \coloneqq \frac{T\cdot \bar{k}_m}{\bar{h}_m} \in \C(T), \]
then $\deg r_{m} = \deg (T\cdot \bar{k}_m) =m+1$ for every $m\in \Z_{\geq 1}$.
\end{lem}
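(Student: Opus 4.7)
The plan is to establish the degree claims by induction on $m$ and to deduce coprimality from a matrix reformulation of the recursion (\ref{induction-hk}).

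For the degree assertions I would argue by induction. The base case $m=1$ is immediate from (\ref{induction-hk}): $\bar{h}_1 = (1-y_0) - T$ and $\bar{k}_1 = (2-y_0) - T$, both of degree $1$ with leading coefficient $-1$. For the inductive step, assume $\deg \bar{h}_m = \deg \bar{k}_m = m$ with leading coefficients $a_m, b_m \in \C^{*}$. The recursion then gives $\bar{h}_{m+1} = \bar{h}_m - T\bar{k}_m$ and $\bar{k}_{m+1} = \bar{h}_m + (1-T)\bar{k}_m$, so in each case the $T^{m+1}$-coefficient equals $-b_m \neq 0$; hence $\deg \bar{h}_{m+1} = \deg \bar{k}_{m+1} = m+1$ and the induction closes.

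For coprimality of $\bar{h}_m$ and $T\bar{k}_m$ in $\C[T]$, I would rewrite (\ref{induction-hk}) as
\[
\begin{pmatrix} \bar{h}_{m+1} \\ \bar{k}_{m+1} \end{pmatrix}
= M \begin{pmatrix} \bar{h}_m \\ \bar{k}_m \end{pmatrix},
\qquad
M = \begin{pmatrix} 1 & -T \\ 1 & 1-T \end{pmatrix}.
\]
The key point is that $\det M = 1$ is independent of $T$, so for every $t \in \C$ the numerical matrix $M(t)$ is invertible. Consequently, if $\bar{h}_m$ and $\bar{k}_m$ shared a common root $t \in \C$, applying $M(t)^{-m}$ would force $\bar{h}_0(t) = \bar{k}_0(t) = 0$, which is impossible since $\bar{k}_0 = 1$. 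This rules out all common roots of $\bar{h}_m$ and $\bar{k}_m$. To exclude $T=0$ as a separate root of $\bar{h}_m$, I would set $T = 0$ in (\ref{induction-hk}) to get $h_m(0) = h_0(0) = 1-\mu$, so $\bar{h}_m(0) = 1 - y_0 \neq 0$ by the hypothesis $y_0 \neq 1$.

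The formula for $\deg r_m$ then follows from Remark \ref{rem-proj}: since $\bar{h}_m$ and $T\bar{k}_m$ are coprime of degrees $m$ and $m+1$ respectively, $\deg r_m = \max(m, m+1) = m+1$. I do not anticipate a substantive obstacle; the one observation doing the real work is the identity $\det M = 1$, which lets $M$ specialize invertibly at every $t \in \C$ and thereby propagates the non-vanishing of $(\bar{h}_0, \bar{k}_0)$ to all $m$.
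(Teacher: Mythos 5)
Your proof is correct and follows essentially the same route as the paper's: an inductive leading-coefficient computation for the degrees, and a descent of any putative common root back to $m=0$ by inverting the recursion, where it contradicts $\bar{h}_0=1-y_0\neq 0$ (resp.\ $\bar{k}_0=1$). Your observation that $\det M=1$ is just a repackaging of the explicit inversion formulas $\bar{h}_m=(1-T)\bar{h}_{m+1}+T\bar{k}_{m+1}$, $\bar{k}_m=\bar{k}_{m+1}-\bar{h}_{m+1}$ used in the paper, and your separate treatment of the root $T=0$ via $\bar{h}_m(0)=1-y_0$ is a valid substitute for the paper's trick of tracking the pair $(\bar{h}_{m'}(c),\,c\cdot\bar{k}_{m'}(c))$.
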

\begin{proof}
By Lemma \ref{lem-FG}, we have $\bar{h}_0= 1-y_0$, $\bar{k}_{0}= 1$ and
\[ \begin{array}{cc}  \bar{h}_{m+1} = \bar{h}_{m} - T\cdot \bar{k}_{m}, & \bar{k}_{m+1} = \bar{h}_{m} + (1-T)\cdot \bar{k}_{m} , \end{array}\]
for every $m\in \Z_{\geq 0}$.
Then, it is easy to see that:
\begin{itemize}
\item For every $m\in \Z_{\geq 1}$, both $\bar{h}_{m}$ and $\bar{k}_{m}$ have  $(-1)^m \cdot  T^m$ as the term with the highest degree.
\item For every $m\in \Z_{\geq 0}$, $\bar{h}_m = (1-T) \cdot \bar{h}_{m+1} + T\cdot \bar{k}_{m+1}$ and $T\cdot \bar{k}_m = T\cdot (\bar{k}_{m+1}-\bar{h}_{m+1})$.
\end{itemize}
From the first assertion, $\deg \bar{k}_m=\deg \bar{h}_m =m$ for every $m\in \Z_{\geq 1}$
From the second assertion, if we suppose that $\bar{h}_m$ and $T\cdot \bar{k}_m$ have a common root $T=c$ in $\C$ for some $m\in \Z_{\geq 1}$, then $\bar{h}_{m'}(c) = c \cdot \bar{k}_{m'}(c) =0$ for every $m'\in \{0,\dots ,m\}$, which is a contradiction when $m'=0$ since $1-y_0\neq 0$.
Thus,  $\bar{h}_m$ and $T\cdot \bar{k}_m$ are coprime in $\C[T]$ for every $m\in \Z_{\geq 1}$.
\end{proof}

Now, we prove Theorem \ref{thm-1} for $K'\#T_{(2,q)}$, where $q\neq \pm 1$ is an odd number.
\begin{thm}\label{thm-main}
Let $K_0 = K\# T_{(2,2m+1)}$, where $K$ is an arbitrary knot in $\R^3$ and  $m\in \Z\setminus \{0,-1\}$.
Then, there is no $\varphi \in \Ham_c(T^*\R^3)$ such that $\varphi(L_{K_0})$ and $\R^3$ have a clean intersection along the unknot in $\R^3$.
\end{thm}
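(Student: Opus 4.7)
The plan is to derive a contradiction from the assumed existence of $\varphi$. Combining Corollary \ref{cor-aug} (via Proposition \ref{prop-unknot-aug}) with Proposition \ref{prop-Jm}, such a $\varphi$ would force the following: for every pair $(y, Z) \in (\Q^*)^2$ with $y \neq Z^n$ for all $n \in \Z$, the specialization $\rest{P_m}{\mu = y,\, U = Z}(T) \in \Q[T]$ admits a root in $\Q$. The objective is therefore to fix $y_0 \in \Q^* \setminus \{1\}$ (for concreteness, $y_0 = 2$) and produce a single $Z_0 \in \Q^*$ with $y_0 \neq Z_0^n$ for every $n$ such that $\rest{P_m}{\mu = y_0,\, U = Z_0}(T)$ has no rational root.

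The central algebraic step is to show that for $m \geq 1$ the polynomial $\rest{P_m}{\mu = y_0}(T) = (U - y_0)\bar{h}_m(T) + y_0 T\, \bar{k}_m(T) \in \Q[U,T]$, viewed in $\Q(U)[T]$, has no root in $\Q(U)$. Rewriting $\rest{P_m}{\mu = y_0}(T) = 0$ as $U = y_0\bigl(1 - r_m(T)\bigr)$ with $r_m = T\bar{k}_m/\bar{h}_m$ from Lemma \ref{lem-deg} of degree $m+1$ as a self-map of $\C P^1$, any hypothetical root $\alpha(U) \in \Q(U)$ would give an equality of rational self-maps $U = y_0\bigl(1 - r_m(\alpha(U))\bigr)$ on $\C P^1$. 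The left-hand side has degree $1$, while the right-hand side has degree $(m+1)\deg\alpha$, which is either $0$ (if $\alpha$ is constant) or at least $m+1 \geq 2$ (otherwise). This degree mismatch rules out such $\alpha$.

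To descend from $\Q(U)$ to a single rational value I will invoke Hilbert's irreducibility theorem. Factoring $\rest{P_m}{\mu = y_0}(T, U) \in \Q[T, U]$ into irreducibles, the absence of a $\Q(U)$-root means every irreducible factor non-constant in $T$ has $\deg_T \geq 2$. Hilbert's theorem then furnishes a Hilbert subset $\mathcal{H} \subset \Q$ such that for every $Z_0 \in \mathcal{H}$ each of these factors remains irreducible upon specializing $U = Z_0$, so the specialization $\rest{P_m}{\mu = y_0,\, U = Z_0}(T) \in \Q[T]$ has no linear factor, i.e.\ no root in $\Q$. Since the set $\{Z_0 \in \Q^* : \exists\, n \in \Z,\; Z_0^n = y_0\}$ is countable, hence thin, one can pick $Z_0 \in \mathcal{H} \cap \Q^*$ also satisfying $y_0 \neq Z_0^n$ for every $n$, yielding the desired contradiction.

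The main subtlety is the degree computation in the second paragraph: it relies essentially on Lemma \ref{lem-deg} together with $y_0 \neq 1$, and this is precisely where the excluded case $m = 0$ breaks (there $\deg r_0 = 1$, and a linear root $\alpha(U) \in \Q(U)$ does exist, consistently with Remark \ref{rem-reduction}). Once the $\Q(U)$-rootlessness is established, the specialization by Hilbert irreducibility is standard. The case $m \leq -2$ should be handled by a symmetric construction based on an appropriate braid representation for negative-torsion $(2,2m+1)$-torus knots, and the same degree-based obstruction applies.
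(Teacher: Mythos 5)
For $m\geq 1$ your argument coincides with the paper's: the same reduction via Proposition \ref{prop-unknot-aug} and Proposition \ref{prop-Jm}, the same rewriting of a hypothetical root as $\rest{r_m}{T=\alpha}=1-y_0^{-1}U$, the same degree count $\deg r_m\cdot\deg\alpha=1$ against $\deg r_m=m+1\geq 2$ from Lemma \ref{lem-deg}, and the same specialization by Hilbert irreducibility. Two small points in that part deserve repair. First, ``countable, hence thin'' is not a valid implication ($\Q$ itself is countable and not thin); what you actually need, and what holds, is that $\{Z_0\in\Q^*\mid Z_0^n=y_0 \text{ for some } n\in\Z\}$ is \emph{finite} for $y_0\neq\pm1$, which is how the paper argues. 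Second, ``each factor remains irreducible, so the specialization has no linear factor'' has a small leak: an irreducible $p_i$ with $\deg_T p_i\geq 2$ can specialize to an irreducible \emph{linear} polynomial if its leading coefficient in $T$ vanishes at $Z_0$. The paper excludes the finitely many such $Z_0$ explicitly; you should do the same.

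The genuine gap is the case $m\leq -2$, which you dispose of in one sentence by asserting that ``a symmetric construction based on an appropriate braid representation for negative-torsion torus knots'' works and that ``the same degree-based obstruction applies.'' This is not a proof: the entire machinery of Subsection \ref{subsubsec-torus} --- the braid $B^{\#}_m=\sigma_n(\sigma_{n+1})^{2m+1}B$, the recursions (\ref{hat-fg-induction}) and (\ref{induction-hk}) defining $h_m$, $k_m$, and $P_m$, and crucially Lemma \ref{lem-deg} --- is set up and proved by induction on $m=0,1,\dots$ only. For $2m+1<0$ you would have to work with $\phi_{\sigma_{n+1}}^{-1}$, derive new recursions, and re-establish the coprimality and degree statements of Lemma \ref{lem-deg}; none of this is automatic, and you have not indicated how it would go. The paper avoids this entirely with a geometric reduction: the exact symplectomorphism $\iota(q,p)=((q_1,q_2,-q_3),(p_1,p_2,-p_3))$ preserves $\lambda_{\R^3}$, conjugates $\varphi^t_H$ to $\varphi^t_{\overline H}$, and carries the hypothetical clean intersection for $K\#T_{(2,2m+1)}$ to one for $\overline{K}\#T_{(2,-2m-1)}$; since $-2m-1=2(-m-1)+1$ with $-m-1\geq 1$ when $m\leq -2$, this lands back in the case already settled. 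You need either this reduction or the (unwritten) negative-exponent computation to close the argument.
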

\begin{proof}
Let us first consider the case $m\geq 1$.
From  Proposition \ref{prop-unknot-aug} and Proposition \ref{prop-Jm} with $\bold{k}=\Q$, it suffices to show that in $(\Q^*)^2$
\begin{align}\label{non-inclusion} \{ (y,Z)  \mid y\neq Z^n\text{ for all }n\in \Z \} \not\subset \{ (y,Z)  \mid \rest{P_m}{\mu=y,U=Z} \text{ has a root in }\Q \} . 
\end{align}
For the fixed rational number $y_0\neq 1$, note that
\[ \rest{P_m}{\mu=y_0,U=Z} = (Z -y_0)\cdot \bar{h}_{m}(T) + y_0T \cdot \bar{k}_{m}(T)  \]
for any $Z\in \Q^*$.
Let us regard $\rest{P_m}{\mu=y_0,U=Z}$ as a polynomial in $\Q(Z)[T]$. Take $p_1,\dots ,p_e\in \Q(Z)[T]$ to be irreducible elements such that
$  \rest{P_m}{\mu=y_0,U=Z} = p_1\cdots p_e $.

We claim that the degree of $p_i$ as a polynomial in $T$ is greater than or equal to $2$ for every $i\in \{1,\dots ,e\}$.
It is proved by a contradiction as follows: Assume that there exists $i\in \{1,\dots ,e\}$ such that $ p_i = q'(T - q)$ for some $q,q'\in \Q(Z)$ with $q'\neq 0$.
Then, we have an equation
\[( y_0^{-1} Z -1) \cdot \rest{\bar{h}_m}{T=q} + q \cdot \rest{\bar{k}_m}{T=q} =0\]
in $\C(Z)$.
Since $\bar{h}_m$ and $T\cdot \bar{k}_m$ does not have a common root in $\C$ by Lemma \ref{lem-deg}, $q$ cannot be a constant and thus, $\rest{\bar{h}_m}{T=q}\neq 0$ in $\C(Z)$.
Therefore, we obtain an equation in $\C(Z)$
\[ \rest{r_{m}}{T=q} = 1 - y_0^{-1} Z \]
for $r_{m} = (T\cdot \bar{k}_{m}) / \bar{h}_{m} \in \C (T)$.
We can compute that
\[ \textstyle{ \deg r_{m}\cdot \deg q =  \deg \left( \rest{r_{m}}{T=q}\right) = \deg ( 1 - y_0^{-1} Z) =1. }\]
For the first equality, see Remark \ref{rem-proj}.
However, $\deg r_{m}\geq 2$ by Lemma \ref{lem-deg},  and we get a contradiction.

By Hilbert's irreduciblity theorem (see e.g. \cite[Chapter 9, Corollary 2.4]{Lang}, which shows that $\Q$ is a Hilbertian field), there are infinitely many $Z_0\in \Q$ such that all of the specializations
\[  \rest{p_1}{Z=Z_0} ,\dots ,  \rest{p_e}{Z=Z_0}  \in \Q[T]\]
are irreducible in $\Q[T]$.
We note that the set
$\bigcup_{i=1}^e \{Z_0 \in \Q \mid \deg (\rest{p_i}{Z=Z_0}) \leq 1 \}$
is finite from the above observation of the degree of $p_1,\dots ,p_m$ in $T$.
In addition, for the fixed rational number $y_0\neq 1$, the set $\{ r \in \Q^* \mid r^n =y_0 \text{ for some }n\in \Z \}$ is also finite.
As a consequence, there are infinitely many $Z_0\in \Q^* \setminus \{ r\in \Q^* \mid r^n =y_0 \text{ for some }n\in \Z \}$ such that
$\rest{p_i}{Z=Z_0}\in \Q[T]$ is irreducible and $\deg (\rest{p_i}{Z=Z_0}) \geq 2$ for every $i\in \{1,\dots ,e\}$, which means that
$\rest{P_m}{\mu=y_0,U=Z_0}$ has no root in $\Q$.
Therefore, the relation (\ref{non-inclusion}) holds.

It remains to show the case $m\leq -2$.
Let $\varphi \in \Ham_c(T^*\R^3)$ and suppose that $\varphi(L_{K_0})$ intersects $\R^3$ cleanly along the unknot $K_1$.
Take a Hamiltonian $H\colon T^*\R^3\times [0,1] \to \R$ such that $\varphi = \varphi^1_H$.
Consider an involution
\[ \iota \colon T^*\R^3 \to T^*\R^3 \colon (q,p)= ((q_1,q_2,q_3),(p_1,p_2,p_3)) \mapsto ((q_1,q_2,-q_3),(p_1,p_2,-p_3)) \]
which preserves $\lambda_{\R^3}$. 
Then, $\iota \circ \varphi^t_H \circ \iota = \varphi^t_{\overline{H}}$ for $\overline{H} \coloneqq H\circ (\iota\times \id_{[0,1]})$. 
Moreover, for the knots
\[ \overline{K_i} \coloneqq \{ (q_1,q_2, -q_3)\in \R^3 \mid (q_1,q_2, q_3) \in K_i\} \text{ for }i=0,1,\]
$ \varphi^1_{\overline{H}} (L_{\overline{K_0}})$ intersects $\R^3 = \iota (\R^3)$ cleanly along $\overline{K_1}$. Since $\overline{K_1}$ is the unknot and $\overline{K_0}$ is the connected sum of the mirror of $K$ and $T_{(2, -2m-1)}$, 
this contradicts to the result from the previous case.
\end{proof}

\subsection{Proof of Theorem \ref{thm-1} for $K'\# 4_1$}\label{subsec-thm2}

Fix any $B\in B_n$ whose closure gives a knot $K$. We consider a braid on $(n+3)$ strands
\[B^{\#} \coloneqq \sigma_n \left( \sigma_{n+1} (\sigma_{n+2})^{-1} \sigma_{n+1}(\sigma_{n+2})^{-1}\right) B \in B_{n+3}.\]
Then, the closure of $B^{\#}$ is the connected sum of $K$ and the figure-eight knot $4_1$.

\begin{prop}\label{prop-J-eight}
We define $P\in \Z[\mu^{\pm}, U^{\pm}][T]$  by
\begin{align}\label{poly-P}
\begin{split}
P (T) \coloneqq & -\mu^{-1} (1-\mu) (U-\mu) + (-1+2\mu -U-\mu^2 U^{-1}) T -\mu T^2 \\
 & + \left( \mu^{-1}(1-\mu) + (2- \mu^{-1} -\mu U^{-1} )T - T^2 \right) \left( (1-\mu)(U-\mu) + \mu U T \right) .
\end{split}
\end{align}
Then, for any field $\bfk$,
\[\pi_{y,Z} (V_{\bfk}(K\#4_1)) \subset \{ (y,Z)\in (\bfk^*)^2 \mid \rest{P}{\mu=y,U=Z} \text{ has a root in }\bfk \}.\]
\end{prop}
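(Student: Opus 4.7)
The strategy parallels the proof of Proposition \ref{prop-Jm}, with the inductive computation replaced by a finite direct calculation since the figure-eight braid adds only five letters $\sigma_n\sigma_{n+1}\sigma_{n+2}^{-1}\sigma_{n+1}\sigma_{n+2}^{-1}$ to $B$.

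First I would compute $\phi_{B^{\#}}(a_{n+3,*})$ by applying \eqref{phi-sigma} in sequence. Since $B\in B_n$ acts trivially on $a_{n+3,*}$, only the five additional letters contribute. The inner braid $\sigma_{n+1}\sigma_{n+2}^{-1}\sigma_{n+1}\sigma_{n+2}^{-1}$ involves only strands $n+1,n+2,n+3$, so it sends $a_{n+3,*}$ to a linear combination of $a_{n+1,*}$, $a_{n+2,*}$, $a_{n+3,*}$. The final application of $\phi_{\sigma_n}$ then maps $a_{n+1,*}\mapsto a_{n,*}$ and leaves $a_{n+2,*}$, $a_{n+3,*}$ untouched, giving
\[
\phi_{B^{\#}}(a_{n+3,*}) = F_1\cdot a_{n,*} + F_2\cdot a_{n+2,*} + F_3\cdot a_{n+3,*}
\]
for certain $F_1,F_2,F_3\in \mathcal{A}_{n+3}$ whose coefficients involve only generators $a_{i,j}$ with $i,j\in\{n,n+2,n+3\}$. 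This identifies the three non-zero entries of the $(n+3)$-rd row of $\boldsymbol{\Phi}^L_{B^{\#}}$ (in columns $n$, $n+2$, $n+3$), and hence the three ideal relations $c_{n+3,n},c_{n+3,n+2},c_{n+3,n+3}$ coming from the bottom row of $\widehat{\bold{A}}-\boldsymbol{\Lambda}\cdot\boldsymbol{\Phi}^L_{B^{\#}}\cdot\bold{A}$.

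Next, passing to the abelianization $HC_0^{\ab}(K\#4_1)$ via $\pi_{n+3}$, I would search for an $R$-algebra combination of $\pi_{n+3}(c_{n+3,n}),\pi_{n+3}(c_{n+3,n+2}),\pi_{n+3}(c_{n+3,n+3})$, with coefficients polynomial in the generators $a_{i,j}$ and allowing an overall unit factor in $\Z[\mu^{\pm},U^{\pm}]$, that collapses to a polynomial identity in the single scalar quantity $T=a_{n,n+3}a_{n+3,n}$. Guided by the torus-knot template and the explicit shape of $P$ in \eqref{poly-P}, I expect the correct combination to produce $P(T)\in \mathcal{I}_{B^{\#}}$; the factors $\mu^{-1}$ and $U^{-1}$ in the coefficient of $T$ simply reflect that the identity obtained by direct expansion reads $(\mu U)\cdot P(T)=$ (integral combination of the $c_{n+3,j}$'s), and $\mu U$ is invertible in $R$.

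Once $P(T)\in \mathcal{I}_{B^{\#}}$ is established, the remainder of the argument is identical to the torus-knot case: one obtains a well-defined unital $R$-algebra map
\[
J\colon R[T]/(P) \to HC_0^{\ab}(K\#4_1),\qquad T\mapsto a_{n,n+3}a_{n+3,n},
\]
and composing with any $\bfk$-valued augmentation of $\mathcal{A}^{\knot}_*(K\#4_1)$ produces a unital ring homomorphism $R[T]/(P)\to\bfk$ sending $\mu\mapsto y$ and $U\mapsto Z$, which is equivalent to $\rest{P}{\mu=y,U=Z}$ having a root in $\bfk$. The main obstacle is purely computational: carrying out the successive substitutions \eqref{phi-sigma} accurately and identifying the precise combination of the three ideal relations that produces the coefficients of \eqref{poly-P}. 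This is tedious but tractable, with the torus-knot analog as a guide.
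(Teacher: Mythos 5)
Your overall architecture (compute the relevant rows of $\boldsymbol{\Phi}^L_{B^{\#}}$, exhibit $P(XY)\in\mathcal{I}_{B^{\#}}$ for $X=a_{n,n+3}$, $Y=a_{n+3,n}$, then run the augmentation argument verbatim from the torus-knot case) matches the paper, and your closing paragraph is exactly the paper's final step. But there is a concrete gap in the middle: you propose to obtain $P(T)$ as a combination of only the three bottom-row relations $\pi_{n+3}(c_{n+3,n})$, $\pi_{n+3}(c_{n+3,n+2})$, $\pi_{n+3}(c_{n+3,n+3})$. These relations involve, besides $X$ and $Y$, the extra generators $a_{n+2,n}$ and $a_{n+2,n+3}$ (through the element $M=-a_{n+2,n}(1-XY)-a_{n+2,n+3}Y$ appearing in $\phi_{B^{\#}}(a_{n+3,*})$), and $c_{n+3,n+2}$ drags in still more generators such as $a_{n+3,n+2}$ and $a_{n,n+2}$. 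No polynomial combination of these three relations alone can be expected to eliminate all of these and collapse to a polynomial in the single quantity $XY$; the variable count simply does not work out.

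The missing ingredient is the $(n+2)$-nd row. The paper also computes $\phi_{B^{\#}}(a_{n+2,*})=a_{n,*}-a_{n,n+3}a_{n+3,*}$, whose associated relations are remarkably simple: $\mu^{-1}\pi_{n+3}(c_{n+2,n})=-a_{n+2,n}-\mu^{-1}+1-XY$ and $U^{-1}\pi_{n+3}(c_{n+2,n+3})=a_{n+2,n+3}-\mu U^{-1}X$. These let you \emph{solve} for $a_{n+2,n}$ and $a_{n+2,n+3}$ as explicit elements of $\Z[\mu^{\pm},U^{\pm}][X,Y]$ modulo $\mathcal{I}_{B^{\#}}$. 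Only after substituting these into $c_{n+3,n}$ and $c_{n+3,n+3}$ do you get two elements $F,G\in\mathcal{I}_{B^{\#}}$ lying in $\Z[\mu^{\pm},U^{\pm}][X,Y]$, and the identity producing \eqref{poly-P} is then $(U-\mu)\cdot F+\mu Y\cdot G=P(XY)$ (no overall unit factor $\mu U$ is needed, contrary to your guess; the units $\mu^{-1}$, $U^{-1}$ enter only in normalizing the two solving relations). The relation $c_{n+3,n+2}$ is never used. So you should revise the plan to compute both the $(n+2)$-nd and $(n+3)$-rd rows and to use the former as elimination relations before combining the latter.
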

\begin{proof}
First, let us compute $\phi_{B^{\#}} (a_{n+2,*})$ and $\phi_{B^{\#}}(a_{n+3,*})$.
Since $\phi_{\sigma_k}(a_{n+2,*}) = a_{n+2,*}$ and  $\phi_{\sigma_k}(a_{n+3,*}) = a_{n+3,*}$ for every $k\in \{1,\dots ,n-1\}$, we have $\phi_B(a_{n+2,*}) = a_{n+2,*}$ and $\phi_B(a_{n+3,*}) = a_{n+3,*}$.
Referring to (\ref{phi-sigma}), we compute that
\[ (\phi_{\sigma_{n+2}})^{-1} \colon
\begin{cases}
 a_{n+1,*}\mapsto a_{n+1,*},\\
 a_{n+2,*} \mapsto a_{n+3,*} , \\
  a_{n+3,*}\mapsto  a_{n+2,*}-a_{n+2,n+3} a_{n+3,*}  , \\
  a_{n+1,n+3} \mapsto a_{n+1,n+2}-a_{n+1,n+3}a_{n+3,n+2} ,
 \end{cases} \]
and
\[\xymatrix@R=5pt@C=13pt{
a_{n+2,*} \ar@{|->}[r]^-{(\phi_{\sigma_{n+2}})^{-1} } &  a_{n+3,*}  &  a_{n+3,*}  \ar@{|->}[r]^-{(\phi_{\sigma_{n+2}})^{-1} } & a_{n+2,*}-a_{n+2,n+3} a_{n+3,*}   \\
\ar@{|->}[r]^-{\phi_{\sigma_{n+1}}} & a_{n+3,*}  &  \ar@{|->}[r]^-{\phi_{\sigma_{n+1}}} & a_{n+1,*}-a_{n+1,n+3}a_{n+3,*}  \\
\ar@{|->}[r]^-{(\phi_{\sigma_{n+2}})^{-1} }  & a_{n+2,*} -a_{n+2,n+3} a_{n+3,*} &  \ar@{|->}[r]^-{(\phi_{\sigma_{n+2}})^{-1} } & a_{n+1,*} - M'' (a_{n+2,*}-a_{n+2,n+3}a_{n+3,*})  \\
\ar@{|->}[r]^-{\phi_{\sigma_{n+1}}}  & a_{n+1,*}-a_{n+1,n+3} a_{n+3, *} &  \ar@{|->}[r]^-{\phi_{\sigma_{n+1}}} &
{\begin{array}{l} a_{n+2,*}-a_{n+2,n+1}a_{n+1,*} \\ - M' (a_{n+1,*}-a_{n+1,n+3}a_{n+3,*}) \end{array} }\\
\ar@{|->}[r]^-{\phi_{\sigma_n}} & a_{n,*} - a_{n,n+3} a_{n+3,*} , & \ar@{|->}[r]^-{\phi_{\sigma_n}}  & a_{n+2,*}-a_{n+2,n}a_{n,*} - M(a_{n,*}-a_{n,n+3}a_{n+3,*}) ,
}\]
where
\begin{align*}
 M'' &\coloneqq (\phi_{\sigma_{n+2}})^{-1}(a_{n+1,n+3}) = a_{n+1,n+2} - a_{n+1,n+3}a_{n+3,n+2}, \\
 M' & \coloneqq \phi_{\sigma_{n+1}} (M'') = -a_{n+2,n+1} - (a_{n+2,n+3}-a_{n+2,n+1}a_{n+1,n+3})a_{n+3,n+1} , \\
 M & \coloneqq \phi_{\sigma_n}(M') =  -a_{n+2,n} - (a_{n+2,n+3}-a_{n+2,n}a_{n,n+3})a_{n+3,n} .
 \end{align*}
As a consequence, we obtain
\begin{align*}
\phi_{B^{\#}}(a_{n+2,*}) & =  a_{n,*} - a_{n,n+3} a_{n+3,*} , \\ 
\phi_{B^{\#}}(a_{n+3,*}) &= (-a_{n+2,n}-M)a_{n,*} + a_{n+2,*} + (M a_{n,n+3})a_{n+3,*},
\end{align*}
for 
\begin{align}
M = -a_{n+2,n}(1-a_{n,n+3}a_{n+3,n}) -a_{n+2,n+3}a_{n+3,n} .
\end{align}

The $(n+2)$-th and $(n+3)$-th row of $\boldsymbol{\Phi}^L_{B^{\#}}$ are given by
\[\begin{pmatrix}
0 & \cdots & 0 & 1 & 0 & 0 & -a_{n,n+3}  \\
0 & \cdots & 0 & -a_{n+2,n}-M & 0 & 1 & M a_{n,n+3}
\end{pmatrix} . \]
Let $c_{i,j}$ denote the $(i,j)$-th entry of $\widehat{\bold{A}} - \boldsymbol{\Lambda} \cdot \boldsymbol{\Phi}^L_{B^{\#}} \cdot \bold{A}$. Then,
\begin{align*}
c_{n+2,n} &= \widehat{\bold{A}}_{n+2,n} - (1\cdot \bold{A}_{n,n} - a_{n,n+3} \bold{A}_{n+3,n} )  \\
& = -\mu a_{n+2,n} - (1-\mu ) - \mu a_{n,n+3} a_{n+3,n} , \\
c_{n+2,n+3} & = \widehat{\bold{A}}_{n+2,n+3} - (1\cdot \bold{A}_{n,n+3} - a_{n,n+3} \bold{A}_{n+3,n+3}) \\
& =U a_{n+2,n+3}- a_{n,n+3} + (1-\mu) a_{n,n+3} \\
& =U a_{n+2,n+3} - \mu a_{n,n+3} , \\
c_{n+3,n} & = \widehat{\bold{A}}_{n+3,n} - ( (  -a_{n+2,n}-M) \bold{A}_{n,n}  + 1\cdot \bold{A}_{n+2,n} + Ma_{n,n+3} \bold{A}_{n+3,n}) \\
& =-\mu a_{n+3,n}+ (1-\mu) ( a_{n+2,n}+M ) + \mu a_{n+2,n} + \mu M a_{n,n+3} a_{n+3,n}  \\
& = -\mu a_{n+3,n} + a_{n+2,n} + M (1-\mu +\mu a_{n,n+3}a_{n+3,n}), \\
c_{n+3, n+3} & = \widehat{\bold{A}}_{n+3,n+3} - ( (  -a_{n+2,n}-M) \bold{A}_{n,n+3} + 1\cdot \bold{A}_{n+2,n+3} + Ma_{n,n+3} \bold{A}_{n+3,n+3}) \\
& =(U-\mu) +  (a_{n+2,n}+M)a_{n,n+3} - a_{n+2,n+3} - (1-\mu) M a_{n,n+3} \\
& = (U-\mu)  +  a_{n+2,n}a_{n,n+3} - a_{n+2,n+3} + \mu M a_{n,n+3} .
\end{align*}
Let us denote $X= a_{n,n+3}$ and $Y=a_{n+3,n}$.
The ideal $\mathcal{I}_{B^{\#}}$ contains $\pi_{n+3}(c_{i,j})$ for every $i,j\in \{1,\dots ,n+3\}$, where $\pi_{n+3}$ is defined as (\ref{map-pi}). In particular,  $\mathcal{I}_{B^{\#}}$ contains
\begin{align*}
\mu^{-1}\cdot \pi_{n+3}(c_{n+2,n}) & = - a_{n+2,n} - \mu^{-1}+1 - XY , \\
U^{-1} \cdot \pi_{n+3}(c_{n+2,n+3}) & = a_{n+2,n+3} -\mu U^{-1} X.
\end{align*}
Replacing $a_{n+2,n}$ with $-\mu^{-1}+1 -XY$ and $a_{n+2,n+3}$ with $\mu U^{-1} X$, we obtain from $\pi_{n+3}(M)$
\begin{align*}
\bar{M} & \coloneqq - ( -\mu^{-1}+1 -XY ) (1 -XY) - \mu U^{-1} XY \\
& = \mu^{-1}(1-\mu) +  ( 2- \mu^{-1} -\mu U^{-1} )XY -(XY)^2,
\end{align*}
and by the same replacement, we obtain from $\pi_{n+3}(c_{n+3,n})$ and $\pi_{n+3}(c_{n+3,n+3})$
\begin{align*}
F &\coloneqq  -\mu Y + ( -\mu^{-1}+1 -XY) + \bar{M}(1-\mu + \mu XY),  \\
G & \coloneqq  (U-\mu) + (  -\mu^{-1}+1 -XY ) X - \mu U^{-1} X + \mu\bar{M} X  \\
& = (U-\mu) +  (- \mu^{-1}+1-\mu U^{-1} - XY ) X+ \mu\bar{M} X .
\end{align*}
Then, both $F$ and $G$ are contained in $\mathcal{I}_{B^{\#}}$.

We focus on the first terms $-\mu Y$ and $(U-\mu)$ of $F$ and $G$. Then, we compute
\begin{align*}
 (U-\mu) \cdot F + \mu Y \cdot G =  & (U-\mu) (  -\mu^{-1}+1 -XY) + (U-\mu)\bar{M}(1-\mu + \mu XY) \\
 & + \mu ( - \mu^{-1}+1-\mu U^{-1} - XY ) XY  + \mu^2\bar{M} XY \\
 = &  -\mu^{-1} (1-\mu) (U-\mu) + (-1+2\mu -U-\mu^2 U^{-1}) XY -\mu (XY)^2 \\
 & + \bar{M} \left( (1-\mu)(U-\mu) + \mu U XY \right) .
\end{align*}
If we define $P\in \Z[\mu^{\pm},U][T]$ by (\ref{poly-P}), then
\[ (U-\mu) \cdot F + \mu Y \cdot G = P(XY),\] 
which implies that $P(XY)$ is contained in $\mathcal{I}_{B^{\#}}$.
Therefore, we have a well-defined unital $R$-algebra map
\[ J\colon R[T]/ (P) \to R[a_{i,j}\mid 1\leq i,j\leq n+3,\ i\neq j] /\mathcal{I}_{B^{\#}} = HC^{\ab}_0(K\# 4_1) \]
which maps $T $ to $XY  = a_{n,n+3} a_{n+3,n} $.

The rest of argument is parallel to Proposition \ref{prop-Jm}.
Given a $\bold{k}$-valued augmentation $\epsilon$ of $\mathcal{A}^{\knot}_*(K\# 4_1)$, it induces a unital ring homomorphism from  $HC^{\ab}_0(K\#4_1)$ to $\bfk$.
By precomposing $J$, we obtain a unital ring homomorphism $ R[T] / (P)  \to \bold{k}$ which sends $\mu$ to $\epsilon(\mu)$ and $U$ to $\epsilon(U)$.
This means that $\rest{P}{\mu=\epsilon(\mu),U=\epsilon(U)}\in \bold{k}[T]$ has a root in $\bold{k}$.
Now, the proposition follows from the definition of $V_{\bold{k}}(K\# 4_1)$. 
\end{proof}

\begin{rem}
Similar to $\rest{P_m}{U=1}$ in Remark \ref{rem-reduction}, consider a substitution by $U=1$. Then
\begin{align*}
\rest{P}{U=1}(T) =&  -\mu^{-1} (1-\mu)^2 + (-2+2\mu -\mu^2 ) T -\mu T^2 \\
 & + \left( \mu^{-1}(1-\mu) + ( 2- \mu^{-1} -\mu )T - T^2 \right) \left( (1-\mu)^2 + \mu  T \right) \\
= & \  ( -\mu^{-1} -T )  \left( (1-\mu)^2 + \mu  T \right) \\
 & + \left( \mu^{-1}(1-\mu) + (2- \mu^{-1} -\mu )T - T^2 \right) \left( (1-\mu)^2 + \mu  T \right) .
\end{align*}
Therefore, $\rest{P}{U=1}\in \Z[\mu^{\pm}][T]$ has a root $T= - \mu^{-1}(1-\mu)^2$.
\end{rem}

\begin{thm}\label{thm-eight}
Let $K_0 = K \# 4_1$, where $K$ is an arbitrary knot in $\R^3$.
Then, there is no $\varphi \in \Ham_c(T^*\R^3)$ such that $\varphi(L_{K_0})$ and $\R^3$ have a clean intersection along the unknot in $\R^3$.
\end{thm}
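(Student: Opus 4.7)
The plan is to mirror the proof of Theorem \ref{thm-main} but to replace the appeal to Hilbert's irreducibility by the rational root theorem, as indicated in the introduction. Assuming for contradiction that some $\varphi \in \Ham_c(T^*\R^3)$ makes $\varphi(L_{K_0})$ meet $\R^3$ cleanly along the unknot, combining Proposition \ref{prop-unknot-aug} and Proposition \ref{prop-J-eight} with $\bfk = \Q$ yields the following: for every $(y, Z) \in (\Q^*)^2$ with $y \neq Z^n$ for all $n\in \Z$, the specialization $\rest{P}{\mu = y,\, U = Z} \in \Q[T]$ must have a root in $\Q$. Hence it suffices to exhibit a single pair $(y_0, Z_0) \in (\Q^*)^2$ with $y_0 \notin \{Z_0^n \mid n \in \Z\}$ for which this fails.

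For that step I would try the most economical nontrivial specialization, namely $\mu = -1$ and $U = 2$; clearly $-1 \neq 2^n$ for any $n \in \Z$, and this avoids the degenerate case $U = 1$ of the remark following Proposition \ref{prop-J-eight}. Substituting into the explicit formula $(\ref{poly-P})$ and clearing denominators, a short computation gives
\[ 2\cdot \rest{P}{\mu = -1,\, U = 2}(T) = 4T^{3} - 8T^{2} - 9T - 12. \]
By the rational root theorem, any rational root $p/q$ in lowest terms of the right-hand side must satisfy $p \mid 12$ and $q \mid 4$, so there is an explicit short list of candidates. A direct evaluation at each candidate shows that none is a root: the unique real root lies strictly between $T=3$ and $T=4$ (since the values of the cubic at those endpoints are $-3$ and $80$), and no rational in the candidate list falls in that interval. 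This contradicts the consequence drawn from the hypothetical $\varphi$.

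No mirror/involution step is needed at the end, because the figure-eight knot is amphichiral, so unlike the $K_0 = K' \# T_{(2,2m+1)}$ case with $m \leq -2$ in Theorem \ref{thm-main}, the argument handles both orientations simultaneously. The only nonroutine point is the choice of $(y_0, Z_0)$: many natural specializations (most conspicuously $U=1$) yield cubics with rational roots, so one must pick a pair that steers clear of such coincidences. Once such a pair is fixed, the remainder of the proof is a finite, elementary verification with the rational root theorem.
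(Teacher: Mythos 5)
Your proposal is correct and follows essentially the same route as the paper: combine Proposition \ref{prop-unknot-aug} and Proposition \ref{prop-J-eight} over $\Q$, then exhibit one pair $(y_0,Z_0)$ with $y_0\neq Z_0^n$ for which the specialized cubic has no rational root via the rational root theorem. The only difference is the choice of specialization -- you take $(\mu,U)=(-1,2)$ giving $4T^3-8T^2-9T-12$, while the paper takes $(\mu,U)=(-1,-2)$ giving $4-3T-4T^3$ -- and both choices work.
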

\begin{proof}
Let us substitute $\mu$ in $P$ by $-1$. Then,
\begin{align*}
 \rest{P}{\mu=-1} & = 2 (U+1) + (-3 -U-U^{-1})T + T^2 + ( -2  + ( 3+ U^{-1})T -T^2 )   ) ( 2(U+1) - UT  )  \\
 & = -2(U+1)  + ( 5 + 7U + U^{-1})T + (-5U-2 )T^2 +UT^3.
 \end{align*}
We substitute $U$ by $-\frac{1}{2}$, then
\[ \rest{P}{\mu=-1,U=-\frac{1}{2}} = \textstyle{ -1  -\frac{1}{2} T + \frac{1}{2}T^2 -\frac{1}{2}T^3.} \]
Note that $(-1,-\frac{1}{2}) $ is in  $ \{ (y,Z) \mid y\neq Z^n\text{ for all }n\in \Z \}$.
If we can check that $ \rest{P}{\mu=-1,U=-\frac{1}{2}}$ has no root in $\Q$, the theorem immediately follows from Proposition \ref{prop-unknot-aug} and Proposition \ref{prop-J-eight}.

Multiplying by $2$, we obtain a polynomial with integer coefficients
\[ \bar{P}\coloneqq 2\cdot \rest{P}{\mu=-1,U=-\frac{1}{2}} =  -2 - T +T^2 - T^3 .\]
By the rational root theorem, the only possible rational roots of $\bar{P}$ are  $1,-1,2$ and $-2$.
%
One can check that
\[\begin{array}{llll} \bar{P}( 1 ) = -3 , & \bar{P}(-1) = 1 , & \bar{P}(2) = -8, & \bar{P}(-2) = 12 . \end{array}\]
Therefore, we can conclude that $\bar{P}$ does not have a root in $\Q$.
\end{proof}

\appendix

\section{Proof of Proposition \ref{prop-Floer}}\label{sec-Floer}

Let $K_0$ be an oriented knot in $\R^3$ and consider its conormal bundle $L_{K_0}$.
The Lagrangian submanifolds $\R^3$ and $L_{K_0}$ of $T^*\R^3$ are oriented as in Subsection \ref{subsubsec-CE-conormal}.

For any $\psi \in \Ham_c(T^*\R^3)$, we will introduce a $\Z/2$-graded Floer cochain complex \[(CF^*(\psi(\R^3) , L_{K_0} ;H ), d_J)\]
with coefficients in the Laurent polynomial ring $\F_2 [\lambda^{\pm}]$.
It is a slight enhancement of the Floer cochain complex over $\F_2$ in \cite[Section 8]{Seidel}.

We choose a Hamiltonian $H \colon T^*\R^3\times [0,1] \to \R$ with  compact support such that the Lagrangian submanifold
\[P_H\coloneqq (\varphi^1_{H}\circ \psi) (\R^3)\]
intersects $L_{K_0}$ transversely. $P_H$ is oriented so that the diffeomorphism $\varphi^1_H \circ \psi \colon \R^3 \to P_H$ preserves orientations.
From the orientations of $P_H$ and $L_{K_0}$, a $\Z/2$-grading $|x|\in \Z/2$ is defined for every $x\in P_H \cap L_{K_0}$. See \cite[Example 2.7, Section 2.d]{Seidel-gr}.
Note that the grading is shifted by $1$ if we use the opposite orientation of $P_H$.
%
We also choose an almost complex structure $J$ such that  $d \lambda_{\R^3}(\cdot, J\cdot)$ is a Riemannian metric on $T^*\R^3$ and $J (\partial_{p_i}) = \partial_{q_i}$ for $i\in \{1,2,3\}$ outside a compact subset of $T^*\R^3$.

Fix a base point $z_0\in L_{K_0}$.
For any $x \in  P_H \cap L_{K_0}$, we choose a path $c_x\colon [0,1] \to L_{K_0}$ such that $c_x(0)=z_0$ and $c_x(1)= x$.
Let $c_x^{-1}$ denote the path $[0,1] \to L_{K_0} \colon t\mapsto c_x(1-t)$.
For every $x,y \in P_H \cap L_{K_0}$ and a homology class $A\in H_1(L_{K_0})$, we define $\mathcal{M}_{(H,J),A}(x,y)$ to be the space of maps $u\colon \R\times [0,1]\to T^*\R^3$ satisfying:
\begin{itemize}
\item $\partial_s u (s,t) + J_{u (s,t)} (\partial_t u (s,t) - (X^t_{H})_{u(s,t)})=0$ for every $(s,t) \in \R\times [0,1]$.
\item $u(\R\times \{0\})\subset \psi(\R^3) $ and $ u(\R\times \{1\}) \subset L_{K_0}$.
\item $\lim_{s\to -\infty}u(s,t) = \varphi^t_{H} ((\varphi^1_H)^{-1}(x))$ and $\lim_{s\to \infty} u(s,t)= \varphi_{H}^t((\varphi^1_H)^{-1}(y))$ $C^{\infty}$-uniformly on $t\in [0,1]$.
\item $A\in H_1(L_{K_0})$ is represented by a loop based at $z_0$ obtained by concatenating the three paths in $L_{K_0}$: $c_x$, the path $[-\infty,\infty] \to L_{K_0}\colon s\mapsto u(s,1)$ from $x$ to $y$, and $c_y^{-1}$.
\end{itemize}
Figure \ref{figure-strip} illustrates $u(\R\times [0,1])\subset T^*\R^3$ and the three paths in $L_{K_0}$.
The group $\R$ acts on $\mathcal{M}_{(H,J),A}(x,y)$ by $(u\cdot \sigma)(s,t) = u(s + \sigma ,t)$ for every $\sigma\in \R$.
\begin{figure}
\centering
\begin{overpic}[height=4cm]{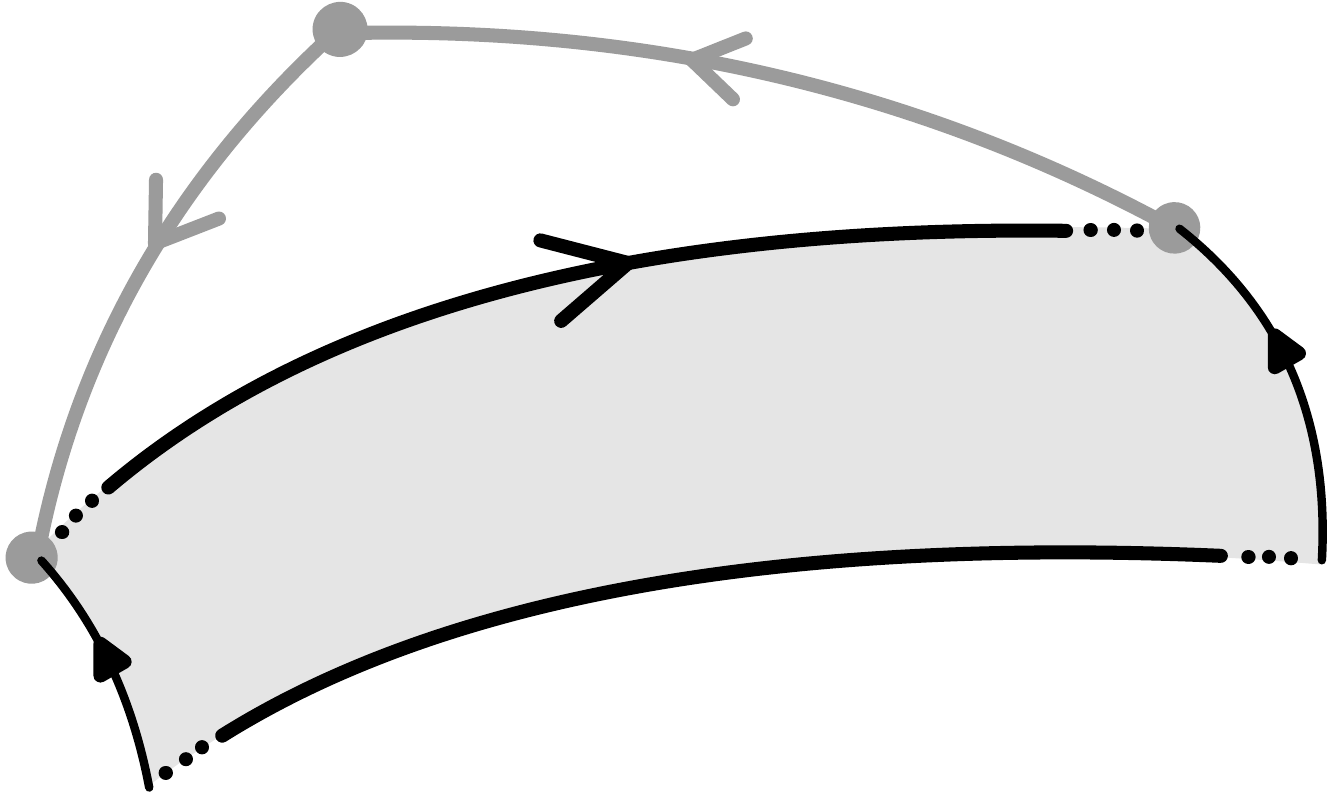}
\put(40,25){$u (\R\times [0,1] )$}
\put(31,43.5){$u(\R\times \{1\})$}
\put(56,57){$c_y^{-1}$}
\put(5.5,42){$c_x$}
\put(24,61){$z_0$}
\put(-4,16.5){$x$}
\put(91,44){$y$}
\put(50,11.5){$u(\R\times \{0\})$}
\end{overpic}
\caption{
The shaded region is the image of $u$.
$c_x$ and $c_y$ are paths in $L_{K_0}$ and $u(\R\times \{1\})$ is contained in $ L_{K_0}$. $u(\R\times \{0\})$ is contained in $\psi(\R^3)$.}\label{figure-strip}
\end{figure}

To define the $\Z/2$-graded Floer cochain complex, we refer to \cite[Section 2.f]{Seidel-gr}.
We take the pair $(H,J)$ to be \textit{regular} so that the moduli space $\mathcal{M}_{(H,J),A}(x,y)/\R$ is cut out transversely for every $x,y \in P_H \cap L_{K_0}$ such that $x\neq y$.
The dimension of  $\coprod_{A\in H_1(L_{K_0})}  \mathcal{M}_{(H,J),A}(x,y)/\R$ is equal to $|x|-|y|-1$ modulo $2$.
Moreover,
by the Gromov compactness, the union of $0$-dimensional components of $\coprod_{A\in H_1(L_{K_0})}  \mathcal{M}_{(H,J),A}(x,y)/\R$ is a finite set.
We define $n_{(H,J),A}(x,y) \in \F_2$ to be the modulo $2$ count of the number of the $0$-dimensional components of $\mathcal{M}_{(H,J),A}(x,y)/\R$.

We define $CF^*(\psi(\R^3), L_{K_0};H)$ to be the $\Z/2$-graded $\F_2[\lambda^{\pm}]$-module freely generated by the set $P_H\cap L_{K_0}$ whose grading is given by $|\cdot | \colon P_H\cap L_{K_0} \to \Z/2$. The $\F_2[\lambda^{\pm}]$-linear map $d_J$ of degree $1$ is defined by
\[ d_J (y) \coloneqq \sum_{m=-\infty}^{\infty} \sum_{|x|\equiv |y| +1 \mod 2} \left( n_{(H,J),m[K_0]}(x,y) \right) \lambda^m \cdot x  \]
for every $y\in P_H\cap L_{K_0}$.
By a standard argument using the Gromov compactification of the $1$-dimensional components of $\mathcal{M}_{(H,J),A}(x,y)/\R$ for each $A\in H_1(L_{K_0})$, we can prove that $d_J\circ d_J =0$.

From this cochain complex, we obtain a cohomology group
\[HF^*(\psi (\R^3), L_{K_0};(H,J)) \coloneqq \ker d_J / \Im d_J.\]
We claim that its isomorphism class as a $\Z/2$-graded $\F_2[\lambda^{\pm}]$-module is independent of the choice of the paths $\{c_x\mid x\in P_H\cap L_{K_0}\}$.
To see this, we choose another path $c'_x$ instead of $c_x$ for every $x\in P_H\cap L_{K_0}$ and define $d'_J$ by using $\{c'_x\mid x\in P_H\cap L_{K_0}\}$ in the same way as $d_J$. Take $m_x \in \Z$ such that $m_x[K_0] \in H_1(L_{K_0})$ is represented by a loop obtained by concatenating $c'_x$ and $c_x^{-1}$. Then we can define an $\F_2[\lambda^{\pm}]$-linear isomorphism
\[
\begin{array}{cc} \Psi \colon CF^*(\psi(\R^3),L_{K_0};H) \to CF^* (\psi(\R^3),L_{K_0};H) \colon x \mapsto \lambda^{m_x}x & \text{for }x\in P_H \cap L_{K_0}, \end{array}\]
for which we have $d_J\circ \Psi = \Psi \circ d'_J$.
This proves the claim.

Moreover, by a standard argument using a homotopy from $\psi$ to $\id_{T^*\R^3}$ in $\Ham_c(T^*\R^3)$, we can show the invariance of the Floer cohomology under Hamiltonian isotopies, which we state as follows.
\begin{thm}\label{thm-floer-isom}
If we choose a pair $(H_0,J_0)$ as above to define $(CF^*(\R^3,L_{K_0};H_0),d_{J_0})$ (i.e. the case $\psi= \id_{T^*\R^3}$),
then there exists an isomorphism of $\Z/2$-graded $\F_2[\lambda^{\pm}]$-modules
\[
HF^*(\psi(\R^3),L_{K_0};(H,J)) \cong HF^*(\R^3 , L_{K_0} ; (H_0,J_0)) .
\]
\end{thm}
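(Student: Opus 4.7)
The plan is to implement the standard Floer-theoretic continuation argument while carefully tracking the $\F_2[\lambda^{\pm}]$-module structure. Since $\psi \in \Ham_c(T^*\R^3)$, pick a compactly supported Hamiltonian $K : T^*\R^3 \times [0,1] \to \R$ with $\varphi_K^1 = \psi$, and set $\sigma_s \coloneqq \varphi_K^s$, giving a path $(\sigma_s)_{s \in [0,1]}$ in $\Ham_c(T^*\R^3)$ from $\id$ to $\psi$. The strategy is to build a continuation cochain map $\Phi \colon CF^*(\R^3, L_{K_0}; H_0) \to CF^*(\psi(\R^3), L_{K_0}; H)$ together with a homotopy inverse, each of which is $\F_2[\lambda^{\pm}]$-linear.

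Concretely, one chooses a smooth family $(H^s, J^s)_{s \in \R}$ of Hamiltonian/almost-complex-structure pairs, and a smooth family $(\tilde{\sigma}_s)_{s \in \R}$ of compactly supported Hamiltonian diffeomorphisms (obtained by reparametrising $\sigma_s$), which agree with $(H_0, J_0, \id)$ for $s \le -S$ and with $(H, J, \psi)$ for $s \ge S$ for some $S > 0$. For $x_- \in P_{H_0} \cap L_{K_0}$, $x_+ \in P_H \cap L_{K_0}$ and $A \in H_1(L_{K_0})$, define the moduli space $\mathcal{N}_A(x_-, x_+)$ of maps $u \colon \R \times [0,1] \to T^*\R^3$ satisfying the parametrised Floer equation with data $(H^s, J^s)$, the moving boundary condition $u(s, 0) \in \tilde{\sigma}_s(\R^3)$, the fixed boundary condition $u(s, 1) \in L_{K_0}$, the appropriate asymptotics at $s = \pm\infty$, and the homology-class condition $A$ specified by concatenating $c_{x_-}$, the path $s \mapsto u(s, 1)$ in $L_{K_0}$, and $c_{x_+}^{-1}$. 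For generic choices these moduli spaces are transversely cut out and the zero-dimensional components are finite, so counting mod $2$ defines
\[
\Phi(x_-) \coloneqq \sum_{m \in \Z}\sum_{x_+} \bigl(\# \mathcal{N}_{m[K_0]}(x_-, x_+)^0 \bigr)\, \lambda^m \cdot x_+ .
\]

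A standard Gromov compactification of the one-dimensional strata of $\mathcal{N}_A(x_-, x_+)$, whose boundary consists of broken configurations where a $d_{J_0}$-strip splits off at $s = -\infty$ or a $d_J$-strip splits off at $s = +\infty$, gives $d_J \Phi = \Phi\, d_{J_0}$. Running the symmetric construction with the reversed family $(\tilde{\sigma}_{-s}, H^{-s}, J^{-s})$ produces a map $\Psi$ in the opposite direction; interpolating between the concatenated family and the constant family yields chain homotopies $\Psi\Phi \simeq \id$ and $\Phi\Psi \simeq \id$.

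The main obstacle, and the only point where the argument exceeds the plain invariance statement in \cite[Section 8]{Seidel}, is upgrading $\Phi$, $\Psi$ and the chain homotopies from $\F_2$-linear to $\F_2[\lambda^{\pm}]$-linear. The decisive feature is that $L_{K_0}$ serves as the boundary condition at $t = 1$ throughout the entire continuation and homotopy construction; consequently, when a $d$-strip breaks off from a continuation strip, the two boundary paths on $L_{K_0}$ concatenate into a single loop whose homology class is the sum of the two individual classes. Tracking this additivity through every compactification of one-dimensional moduli spaces shows that all of the relevant operations commute with multiplication by $\lambda$, by the same combinatorics that establishes $d_J^2 = 0$ over $\F_2[\lambda^{\pm}]$. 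Once this bookkeeping is in place, $\Phi$ induces the desired isomorphism of $\Z/2$-graded $\F_2[\lambda^{\pm}]$-modules.
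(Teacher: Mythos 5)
Your proposal is correct and follows essentially the same route the paper takes: the paper proves this theorem only by invoking ``a standard argument using a homotopy from $\psi$ to $\id_{T^*\R^3}$ in $\Ham_c(T^*\R^3)$,'' which is precisely the continuation-map construction you spell out, and your observation that $\F_2[\lambda^{\pm}]$-linearity follows from additivity of the $H_1(L_{K_0})$-classes under breaking along the fixed boundary component $L_{K_0}$ is the correct way to carry out the coefficient enhancement the paper leaves implicit.
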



Now, we prove Proposition \ref{prop-Floer} about a clean Lagrangian intersection along a circle.

\begin{proof}[Proof of Proposition \ref{prop-Floer}]
For $\varphi \in \Ham_c(T^*\R^3)$ in the statement of Proposition \ref{prop-Floer}, let us set $\psi \coloneqq \varphi^{-1}$. Then, we can change the setup as follows:
 $\psi(\R^3)$ and $L_{K_0}$ have a clean intersection along $\psi(K_1)$, where $K_1$ is an oriented knot in $\R^3$. 
We may assume that for sufficiently small $\epsilon>0$, $\psi ( L_{K_1}\cap D^*_{\epsilon}\R^3) \subset L_{K_0}$.
We fix an orientation of $\psi(K_1)$ via the diffeomorphism $\rest{\psi}{K_1} \colon K_1 \to \psi(K_1)$ and define $a\in \Z$ by $[\psi(K_1)] = a [K_0] \in H_1(L_{K_0})$.
Let $(-1)^{\sigma}$ denote the sign of orientation of the embedding $\rest{\psi}{L_{K_1}\cap D^*_{\epsilon}\R^3} \colon L_{K_1}\cap D^*_{\epsilon}\R^3 \to L_{K_0}$.
Then, the claim of the proposition is equivalent to that $a \in \{1,-1\}$ and $\sigma\equiv 0 \mod 2$.
Since $\sigma\equiv 0 \mod 2$ is already shown in \cite[Lemma 4.4]{O}, we only give a proof for $a\in \{1,-1\}$ (see also Remark \ref{rem-sign} below).

We choose a Mose function $f\colon \psi(K_1) \to \R$ which has only two critical points $x_0,x_1\in \psi(K_1)$ such that $f$ takes its minimum at $x_0$ and its maximum at $x_1$. We also choose a Riemannian metric $g$ on $\psi(K_1)$. By using $g$, we obtain the gradient vector field $V_f$ for $f$ by $g(V_f,\cdot) = df$. Then, we consider the set $\mathcal{T}_{(f,g)}(x_0,x_1)$ of trajectories $\gamma\colon \R\to \psi(K_1)$ such that:
\begin{itemize}
\item $\frac{d \gamma}{ds}(s) = (V_f)_{\gamma(s)}$ for every $s\in \R$.
\item $\lim_{s\to -\infty}\gamma (s) = x_0$ and $\lim_{s \to \infty} \gamma(s) = x_1$.
\end{itemize}
Since $\psi(\R^3)$ and $L_{K_0}$ intersects cleanly along $\psi(K_1)$ and $\psi(K_1)$ is connected, we can apply \cite[Theorem 3.4.11, Lemma 3.4.12]{P} to get the following results:
From the pair $(f,g)$ with $||f ||_{C^1}$ sufficiently small,
we can construct
a Hamiltonian $H_f \colon T^*\R^3 \to \R$ supported in a neighborhood of $\psi(K_1)$ and an almost complex structure $J_g$ on $T^*\R^3$ such that the pair $(H_f,J_g)$ is regular and the following hold:  $P_{H_f} \cap L_{K_0} = \{x_0,x_1\}$ and
there is a bijection
\[ \Phi \colon \coprod_{A \in H_1(L_{K_0})} \M_{(H_f,J_g),A}(x_0,x_1) \to \mathcal{T}_{(f,g)}(x_0,x_1) \colon u \mapsto u(\cdot,1) .\]

It is clear that $\mathcal{T}_{(f,g)}(x_0,x_1)$ has only two elements.
Let us label them as $\gamma_1$ and $\gamma_2$ so that $\frac{d \gamma_2}{ds}(s)$ is a positive vector with respect to the orientation of $\psi(K_1)$.
We tale $k\in \Z$ such that $\Phi^{-1}(\gamma_1)$ belongs to $\M_{(H_f,J_g),k[K_0]}(x_0,x_1)$. ($k$ depends on the choice of the paths $c_{x_0}$ and $c_{x_1}$.) Then, $\Phi^{-1}(\gamma_2)$ belongs to $\M_{(H_f,J_g),A}(x_0,x_1)$ for the homology class
\[A= k[K_0] + [\psi(K_1)] = (k+a)[K_0]\]
since $[\psi(K_1)] \in H_1(L_{K_0})$ is represented by a loop obtained by concatenating the two paths: $[-\infty,\infty] \to L_{K_0}\colon s \mapsto \gamma_1(-s)$ and $[-\infty,\infty] \to L_{K_0}\colon s \mapsto \gamma_2(s)$.
From this observation, the differential $d_{J_g}\colon \F_2[\lambda^{\pm}] \cdot  x_1 \to \F_2[\lambda^{\pm}] \cdot x_0 $ is given by
\[d_{J_g} (x_1) =  (\lambda^k + \lambda^{k+a})x_0 .\]
Therefore,
$HF^{*}(\psi(\R^3),L_{K_0};(H_f,J_g))$ is computed as follow (we omit writing $(H_f,J_g)$):
\[ \begin{array}{lll} HF^{0+\sigma_{\psi}}(\psi(\R^3),L_{K_0}) = 0, &  HF^{1+\sigma_{\psi}}(\psi(\R^3),L_{K_0}) \cong \F_2[\lambda^{\pm}]/ (1+\lambda^a) & \text{ if }a\neq 0, \\
HF^{0+\sigma_{\psi}} (\psi(\R^3),L_{K_0}) \cong \F_2[\lambda^{\pm}], &  HF^{1+\sigma_{\psi}} (\psi(\R^3),L_{K_0}) \cong \F_2[\lambda^{\pm}] & \text{ if }a = 0.
\end{array}\]
Here, $\sigma_{\psi}\in \Z/2$ is the shift of degrees.
As $\F_2$-vector spaces,
\begin{align}\label{HF-psi}
\begin{array}{lll} \dim_{\F_2} HF^{0+\sigma_{\psi}}(\psi(\R^3),L_{K_0})  = 0, & \dim_{\F_2} HF^{1+\sigma_{\psi}} (\psi(\R^3),L_{K_0})  = |a| & \text{ if }a\neq 0, \\
\dim_{\F_2} HF^{0+\sigma_{\psi}} (\psi(\R^3),L_{K_0})  = \infty, & \dim_{\F_2} HF^{1+\sigma_{\psi}}(\psi(\R^3),L_{K_0})  =\infty & \text{ if }a = 0. \end{array}
\end{align}
If we consider a special case where $\psi$ is the identity map $\id = \id_{T^*\R^3}$, then $K_1=\psi(K_1) = K_0$ and we have $a=\pm 1$. Therefore,
\begin{align}\label{HF-id}
\begin{array}{ll} \dim_{\F_2} HF^{0+\sigma_{\id}}(\R^3,L_{K_0}) = 0, & \dim_{\F_2} HF^{1+ \sigma_{\id}}(\R^3,L_{K_0}) = 1 . \end{array}
\end{align}
From (\ref{HF-psi}), (\ref{HF-id}) and Theorem \ref{thm-floer-isom}, it follows that
$\sigma_{\psi} \equiv \sigma_{\id} \mod 2$ and $a\in \{ 1 , -1\}$.
\end{proof}

\begin{rem}\label{rem-sign}
The fact $\sigma \equiv 0 \mod 2$ was proved in \cite[Lemma 4.4]{O} by introducing $\Z$-gradings of Lagrangian submanifolds which are lifts of orientations of Lagrangian submanifolds.
Analogous to \cite[Remark 2.9, Lemma 4.4]{O},
by an argument on $\Z/2$-grading in the present case, we can deduce $\sigma \equiv 0 \mod 2$ from $\sigma_{\psi} \equiv \sigma_{\id} \mod 2$.
\end{rem}

\bibliography{reference.bib}

\end{document}